\newtheorem{thm}{Theorem}[section]
\newtheorem{lem}[thm]{Lemma}
\newtheorem{prop}[thm]{Proposition}
\theoremstyle{definition}
\newtheorem{defn}[thm]{Definition}
\theoremstyle{remark}
\numberwithin{equation}{section}
\newcommand{\al}{\alpha}
\newcommand{\ga}{\gamma}
\newcommand{\de}{\delta}
\newcommand{\ep}{\varepsilon}
\renewcommand{\phi}{\varphi}
\renewcommand{\rho}{\varrho}
\newcommand{\x}{\times}
\newcommand{\Z}{\mathbb Z}
\newcommand{\N}{\mathbb N}
\newcommand{\R}{\mathbb R}
\newcommand{\RP}{{\mathbb R}{P}}
\newcommand{\del}{\partial}
\newcommand{\co}{\colon}%\thinspace} 
\newcommand{\cl}{\mathrm{cl}\thinspace}
\newcommand{\interior}{\mathrm{int} \thinspace}
\begin{document}
\mathsurround=1pt 
\title{Decomposition space theory}
\author{Boldizs\'ar Kalm\'ar}
%
%\subjclass[2000]{...}

%\keywords{...}

%
%\thanks{...}

%\address{Alfr\'ed R\'enyi Institute of Mathematics,
%Hungarian Academy of Sciences \newline
%Re\'altanoda u. 13-15, 1053 Budapest, Hungary}
%\email{kalmar.boldizsar@renyi.mta.hu}

%%\address{E\"{o}tv\"{o}s Lor\'{a}nd University, P\'{a}zm\'{a}ny P\'{e}ter
%%s\'{e}t\'{a}ny 1/c.  H-1117 Budapest, Hungary}
%%\email{kalmbold@cs.elte.hu}

%
%\begin{abstract}
%...
%\end{abstract}

\maketitle
\setcounter{tocdepth}{2}

%{{{\huge{\ \ \ \ \ \ \ \ \ \ \ \ \ \ Shrinking decompositions}}}}
%
%
%\bigskip
%\large{ \ \ \ \ \ \ \ \ \ \ \ \ \ \ \ \ \ \ \ \ \ \ \ \ \ \ \ \ \ \ \ \ \ \ \ \ \ \ Boldizs\'ar Kalm\'ar}
%
%%\bigskip
%%\bigskip
%
%\bigskip
%\bigskip
%\bigskip
%%\bigskip
%

%\tableofcontents

\large

%\bigskip
{\Large{\section{Introduction}}}
\bigskip\large

In these notes we give a brief introduction to decomposition theory
and we summarize some classical and well-known results. 
The main question is that if a partitioning of a topological space (in other words a \emph{decomposition}) is given, then what is the topology of the quotient space.
The main result is that an
\emph{upper semi-continuous} decomposition  yields a homeomorphic decomposition space if
the decomposition is \emph{shrinkable} (i.e.\ there exist self-homeomorphisms of the space which shrink the 
partitions into arbitrarily small sets in a controllable way). 
This is called \emph{Bing shrinkability criterion} and it was introduced in \cite{Bi52, Bi57}. 
It is applied in major $4$-dimensional results:
in the disk embedding theorem and in the proof of the $4$-dimensional topological 
Poincar\'e conjecture \cite{Fr82, FQ90, BKKPR}.
It is extensively applied in constructing  approximations of manifold embeddings in dimension $\geq 5$, see
\cite{AC79} and Edwards's cell-like approximation theorem \cite{Ed78}.
If a decomposition is shrinkable, then a decomposition element has to be \emph{cell-like} and \emph{cellular}.
Also the quotient  map is approximable by homeomorphisms.
 A {cell-like map} is a map where the point preimages are similar to points while
 a {cellular map} is a map where the point preimages can be approximated by balls.
There is an essential difference between the two types of maps: 
ball approximations  always give cell-like sets but in a smooth manifold for a cell-like set $C$ the complement  has to 
be simply connected in a nbhd of $C$ in order to be cellular.
Finding conditions for a decomposition to be shrinkable 
is one of the main goal of the theory.
For example, cell-like decompositions are shrinkable if
the non-singleton decomposition elements
have codimension $\geq 3$, that is any maps of disks can be made disjoint from them \cite{Ed16}. 
In many constructions 
 Cantor sets (a set of uncountably many points
that cutting out from the real line we are left with a manifold)
 arise as limits of sequences of sets defining the decomposition.
 The interesting fact is that a limit Cantor set can be non-standard and it can have properties 
 very different from the usual middle-third Cantor set in $[0,1]$.
 An example for such a non-standard Cantor set 
 is given by Antoine's necklace but many other explicit constructions are studied in the subsequent sections.
 The present notes will cover the following:
 upper semi-continuous decompositions,
 defining sequences, cellular and cell-like sets, 
 examples like Whitehead continuum, Antoine's necklace and Bing decomposition, 
 shrinkability criterion and near-homeomorphism, approximating by homeomorphisms 
 and shrinking countable upper semi-continuous decompositions.
 We prove for example 
 that every cell-like subset in a $2$-dimensional manifold is cellular,
 that Antoine's necklace is a wild Cantor set,
 that in a complete metric space a usc decomposition is shrinkable if and only if the decomposition map is a near-homeomorphism and
 that every manifold has collared boundary.

\large

\bigskip{\Large{\section{Decompositions}}}
\bigskip\large

A neighborhood (nbhd for short) of a subset $A$ of a topological space $X$ is an open subset of $X$ which contains $A$.

\begin{defn}
Let  $X$ be a topological space.
A set $\mathcal D \subset \mathcal P (X)$ is a \emph{decomposition} of $X$ if the elements of $\mathcal D$ are pairwise disjoint and
$\bigcup \mathcal D = X$.
An element of $\mathcal D$ which consists of one single point is called a \emph{singleton}.
A non-singleton decomposition element is called \emph{non-degenerate}.
The elements of $\mathcal D$ are the \emph{decomposition elements}.
The set of non-degenerate elements is denoted by 
$\mathcal H_{\mathcal D}$.
\end{defn}

If $f \co X \to Y$ is an arbitrary (not necessarily continuous) map  between the topological spaces $X$ and $Y$, then 
the set $$\{ f^{-1}(y) : y \in Y \}$$ is a decomposition of $X$.
A decomposition defines an equivalence relation on $X$ as usual, i.e.\ $a, b \in X$ are equivalent iff $a$ and $b$ are in the same element of $\mathcal D$.

\begin{defn}
If $\mathcal D$ is a decomposition of $X$, then the \emph{decomposition space}
$X_\mathcal D$ is the space  $\mathcal D$ with the following topology:
the subset $U \subset \mathcal D$ is open exactly if
$\pi^{-1} (U)$ is open. Here $\pi \co X \to \mathcal D$ is the \emph{decomposition map} which maps each $x \in X$ into its 
equivalence class. 
\end{defn}
 
In other words $X_\mathcal D$ is the quotient space with the quotient topology and $$\pi \co X \to X_\mathcal D$$ is just the quotient map.
Recall that by well-known statements $X_\mathcal D$ is compact, connected and path-connected if $X$ is compact, connected and path-connected, respectively. Obviously $\pi$ is continuous.

\begin{prop}
The decomposition space is a $T_1$ space if the decomposition elements are closed.
\end{prop}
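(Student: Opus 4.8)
The plan is to reduce the $T_1$ condition to the statement that points are closed and then pull everything back through $\pi$ using the definition of the quotient topology. Recall that a topological space is $T_1$ if and only if every singleton is a closed subset; this is the characterization I would use, since it interacts cleanly with the quotient construction. So the goal becomes: show that for each decomposition element $D \in \mathcal D$, the one-point set $\{D\} \subset X_{\mathcal D}$ is closed.

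First I would record the closed-set reformulation of the quotient topology. By definition a subset $U \subset X_{\mathcal D}$ is open exactly when $\pi^{-1}(U)$ is open in $X$. Taking complements and using $\pi^{-1}(X_{\mathcal D} \setminus C) = X \setminus \pi^{-1}(C)$, this is equivalent to saying that $C \subset X_{\mathcal D}$ is closed if and only if $\pi^{-1}(C)$ is closed in $X$. This is the only genuine point that needs checking, and it is a routine complementation argument.

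Next I would compute the relevant preimage. A point of $X_{\mathcal D}$ is precisely a decomposition element $D \in \mathcal D$, and since $\pi$ sends each $x \in X$ to the element containing it, we have $\pi(x) = D$ exactly when $x \in D$. Hence $\pi^{-1}(\{D\}) = D$ as a subset of $X$. By the hypothesis that every decomposition element is closed, $D$ is closed in $X$, so by the reformulation above $\{D\}$ is closed in $X_{\mathcal D}$. As $D$ was arbitrary, every singleton of $X_{\mathcal D}$ is closed, and therefore $X_{\mathcal D}$ is $T_1$.

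There is no real obstacle here: the argument is essentially a translation exercise between the $T_1$ axiom and the quotient topology. The only place where care is warranted is making sure the preimage identity $\pi^{-1}(\{D\}) = D$ is understood correctly, since a ``point'' of $X_{\mathcal D}$ is itself a subset of $X$; once that bookkeeping is in place the conclusion is immediate. I would also remark that this is sharp in spirit, in that without closedness of the elements one cannot expect even the $T_1$ separation, which motivates the closedness hypothesis appearing throughout the subsequent development.
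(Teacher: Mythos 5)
Your proof is correct and is essentially the same as the paper's: both reduce $T_1$ to singletons of $X_{\mathcal D}$ being closed and use the fact that $\pi^{-1}(\{D\}) = D$ is closed in $X$, the paper phrasing this via openness of the complement and you via the closed-set form of the quotient topology. The extra bookkeeping you spell out (the complementation step and the preimage identity) is exactly what the paper's shorter argument implicitly uses.
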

\begin{proof}
We have to show that  the points in the space
$X_{\mathcal D}$ are closed. If $U$ is a point complement in  $X_{\mathcal D}$, then $\pi^{-1}(U)$ is the complement of a decomposition 
element, which is open so $U$ is also open.
\end{proof}

We would like to construct and study such decompositions which 
have especially nice properties concerning the behavior of the sequences 
of decomposition elements.

\begin{defn}
Let $f \co X \to \R$ be a function. It is \emph{upper semi-continuous} (resp.\ \emph{lower semi-continuous}) if for every $x \in X$ and $\ep > 0$ there is a nbhd $V_x$ such that 
$f(V_x) \subset (-\infty, f(x) + \ep)$ (resp.\ $f(V_x) \subset (f(x) - \ep, \infty))$.
\end{defn}

For us, upper semi-continuous functions will be important. They are such functions, where a sequence $f ( x_n)$ 
can have only smaller or equal values than $f(x) + \ep_n$ as $x_n \rightarrow x$, where $\ep_n \geq 0$ and $\ep_n \rightarrow 0$.
Let $f \co \R \to \R$ be an upper semi-continuous, positive  function and consider the following decomposition of $\R^2$. Take the vertical segments of the form 
\begin{equation}\label{usc_example}
A_x = \{ (x, y) : y \in [0, f(x) ]\}
\end{equation}
 for each $x \in \R$.
Together with the points in $\R^2$ which are not in these segments (these points are the so-called singletons) this gives a decomposition of $\R^2$. 
This has an interesting property: let $y \in [0, f(x) ]$ for some $x \in \R$ and let $(x_n) \in \R$ be a sequence (which is not necessarily convergent). 
If every nbhd of the point $(x,y)$ intersects all but finitely many segments $A_{x_n}$, then the points $(u, v) \in \R^2$ 
each of whose nbhds
 intersects all but finitely many $A_{x_n}$ are in $A_x$ as well, see Figure~\ref{uscfunction}.
The set of the  points  $(u, v)$ is called the \emph{lower limit} of the sequence  $A_{x_n}$. 
In other words, if an $A_x$ intersects the lower limit of a sequence $A_{x_n}$, then 
all the lower limit is a subset of   $A_x$.
More generally we have the following.

\begin{figure}[h!]
\begin{center}
\epsfig{file=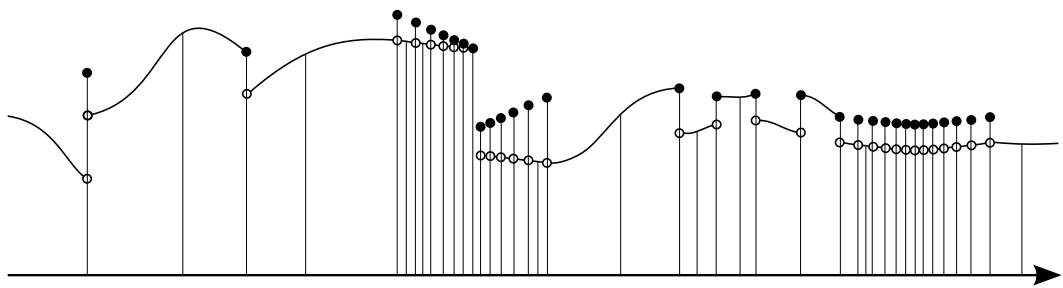, height=3.7cm}
%\put(0.2, 4){$S$}
%\put(-7.1, 3.6){$g(\CP^{2k}) - R$}
%\put(-7.1, 0.5){$T \circ g(\CP^{2k}) - T(R)$}
\end{center} 
\caption{The graph of an upper semi-continuous function $f$ and some segments $A_x$.
If the segments $A_{x_n}$ ``converge'' to a segment $A_x$, then
$f(x_n)$ converges to a number $\leq f(x)$.}
\label{uscfunction}
\end{figure}

\begin{defn}
Let $A_n$ be a sequence of subsets of the space $X$. The \emph{lower limit} of $A_n$ is the set of the points $p \in X$ each of whose 
 nbhds intersects all but finitely many $A_{n}$. It is denoted by $\liminf A_n$.
 The \emph{upper limit} of $A_n$ is the set of the points $p \in X$ each of whose 
 nbhds intersects infinitely many $A_{n}$s. It is denoted by $\limsup A_n$.
\end{defn}

Note that $\liminf A_n \subset \limsup A_n$ is always true.
In the previous example the sets $A_{x_n}$ could approach the set $A_x$ only in a manner determined by the function $f$. This leads to  
the following general definition.

\begin{defn}
Let $\mathcal D$ be a decomposition of a space $X$ such that all elements of $\mathcal D$ are closed and compact and
they can converge to each other only in the following way:
if $A \in \mathcal D$, then for every nbhd $U$ of $A$ there is a nbhd $V$ of $A$  with the property $V \subset U$
such that if some  element $B \in \mathcal D$ intersects $V$, then $B \subset U$, i.e.\ the set $B$ is completely inside 
the nbhd $U$.
Then $\mathcal D$ is an \emph{upper semi-continuous decomposition} (\emph{usc} decomposition for short).
If all the decomposition elements are closed but not necessarily compact, then we say it is a 
\emph{closed upper semi-continuous decomposition}.
\end{defn}

For example, the decomposition defined in (\ref{usc_example}) is usc.

\begin{lem}\label{saturated}
Let $\mathcal D$ be a decomposition of the space $X$ such that each decomposition element is closed. The following are equivalent:
\begin{enumerate}[\rm (1)]
\item
$\mathcal D$ is a closed usc decomposition,
\item
for every 
$D \in \mathcal D$ and every  nbhd $U$  of $D$ 
there is a saturated nbhd $W \subset U$ of $D$, that is an open set  $W$ which is a union of decomposition elements,
\item
for each open subset $U \subset X$, the set $\cup\{ D  \in \mathcal D : D \subset U \}$ is open, 
\item
for each closed subset $F \subset X$, the set $\cup\{ D  \in \mathcal D : D \cap F \neq \emptyset  \}$ is closed, 
\item
the decomposition map $\pi \co X \to X_{\mathcal D}$ is a closed map.
\end{enumerate}
\end{lem}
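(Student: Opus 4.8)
The plan is to prove the five statements equivalent through the cycle $(1)\Rightarrow(4)\Rightarrow(3)\Rightarrow(2)\Rightarrow(1)$, treating the equivalence $(4)\Leftrightarrow(5)$ separately, after first recording one set-theoretic observation that carries all the bookkeeping. For $A\subseteq X$ write $\mathrm{sat}(A)=\bigcup\{D\in\mathcal D : D\cap A\neq\emptyset\}$ for the saturation of $A$; since $\pi$ identifies exactly those points lying in a common element, $\mathrm{sat}(A)=\pi^{-1}(\pi(A))$, and because the elements of $\mathcal D$ are pairwise disjoint, any single element that meets a union of elements is already contained in it. The identity I would pin down at the outset is the duality
\[
\bigcup\{D\in\mathcal D : D\subseteq U\}=X\setminus\mathrm{sat}(X\setminus U),
\]
valid for every $U\subseteq X$: a point lies in the left-hand side exactly when the element containing it avoids $X\setminus U$.

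Granting this, the equivalence $(4)\Leftrightarrow(5)$ is formal. Because $\pi$ is a quotient map, a subset $\pi(F)\subseteq X_{\mathcal D}$ is closed if and only if $\pi^{-1}(\pi(F))=\mathrm{sat}(F)$ is closed in $X$; and $\pi$ is a closed map precisely when $\pi(F)$ is closed for every closed $F$, so $(5)$ is just $(4)$ read through $\pi$. The step $(4)\Rightarrow(3)$ is then immediate from the displayed duality: if $U$ is open then $X\setminus U$ is closed, so $\mathrm{sat}(X\setminus U)$ is closed by $(4)$ and its complement $\bigcup\{D:D\subseteq U\}$ is open. For $(3)\Rightarrow(2)$, given $D\in\mathcal D$ and an open neighborhood $U$ of $D$, the set $W=\bigcup\{B\in\mathcal D:B\subseteq U\}$ is saturated by construction, open by $(3)$, and satisfies $D\subseteq W\subseteq U$, so it is the required saturated neighborhood. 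For $(2)\Rightarrow(1)$, taking the saturated open $W$ with $D\subseteq W\subseteq U$ provided by $(2)$ and setting $V=W$, any element $B$ meeting $V=W$ lies inside $W\subseteq U$ because $W$ is a union of elements --- which is precisely the upper semi-continuity condition.

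The one step with genuine content is $(1)\Rightarrow(4)$, where a local condition must be upgraded to a global closedness statement. Given a closed $F$, I would show $X\setminus\mathrm{sat}(F)$ is open pointwise: if $x\notin\mathrm{sat}(F)$ then the element $D_x$ containing $x$ misses $F$, so $U=X\setminus F$ is an open neighborhood of $D_x$, and $(1)$ yields a neighborhood $V\subseteq U$ of $D_x$ such that every element meeting $V$ lies in $U$, hence is disjoint from $F$. Every $y\in V$ then has its element $D_y$ meeting $V$ and therefore disjoint from $F$, so $y\notin\mathrm{sat}(F)$; thus $V\subseteq X\setminus\mathrm{sat}(F)$ and the latter is open, giving $(4)$.

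The main obstacle is not any individual implication but keeping the local-to-global translation honest throughout: one must repeatedly invoke disjointness of the elements to conclude that an element meeting a saturated set is contained in it, and one must establish openness at each point rather than hoping the naively chosen saturated set is automatically open. Fixing the duality identity with the correct placement of complements is the single place where an error is easiest, so I would verify it once, carefully, and let the remaining implications follow mechanically.
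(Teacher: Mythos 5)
Your proposal is correct and uses essentially the same ingredients as the paper's proof: the pointwise use of the usc property to get openness of a saturated set, the complementation duality between (3) and (4), the identity $\pi^{-1}(\pi(F))=\bigcup\{D\in\mathcal D : D\cap F\neq\emptyset\}$ for the quotient map, and disjointness of elements to conclude that an element meeting a saturated set lies inside it. The only difference is cosmetic — you run the cycle $(1)\Rightarrow(4)\Rightarrow(3)\Rightarrow(2)\Rightarrow(1)$ while the paper runs $(1)\Rightarrow(2)\Rightarrow(3)\Rightarrow(1)$ with $(3)\Leftrightarrow(4)\Leftrightarrow(5)$ attached, and your $(1)\Rightarrow(4)$ step is the paper's $(1)\Rightarrow(2)$ step read through the complement.
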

\begin{proof}
Suppose  $\mathcal D$ is usc and $U$ is a nbhd of $D$.
Let $W$ be the union of all decomposition elements which are subsets of $U$. Then $D \subset W$ obviously and $W$ is open because
if $x \in W$, then $x \in D'$ for some decomposition element $D' \subset W$ and $D' \subset U$,
so by definition $D' \subset V \subset U$ for a nbhd $V$
but the nbhd $V$ of $x$ is in $W$ since 
all the decomposition elements intersecting $V$ have to be in $U$, which means they are in $W$ as well. This shows that 
(1) implies (2).
Suppose (2) holds. If $U$ is an open set, then 
for each decomposition element $D \subset U$
a saturated nbhd $W$ of $D$ is also in $U$ and also 
in $\cup\{ D  \in \mathcal D : D \subset U \}$.
This means that
 the set $\cup\{ D  \in \mathcal D : D \subset U \}$
 is a union of open sets, which proves (3).
 We have that (3) and (4) are equivalent because we can take the complement of a given closed set $F$ or an open set $U$.
We have that  (4) and (5) are equivalent: from (4) we can show (5) by
 taking an arbitrary closed set $F \subset X$,
 then $\cup\{ D  \in \mathcal D : D \cap F \neq \emptyset  \}$ is closed,
 its complement is a saturated open set whose $\pi$-image is open so
 $\pi(F)$ is closed. 
If we suppose (5), then for a closed set $F \subset X$ the set 
$$\pi^{-1} ( \pi(F)) = \cup\{ D  \in \mathcal D : D \cap F \neq \emptyset  \}$$ 
is closed so we get (4).
Finally  (3) implies (1):
 if $D \in \mathcal D$
 and $U$ is a nbhd of $D$, then 
 let $V$ be the open set 
 $\cup\{ D  \in \mathcal D : D \subset U \}$,
 this is a nbhd of $D$, it is in $U$ and
 if a $D' \in \mathcal D$ intersects $V$, then it is in $V$ and hence also in $U$.
 \end{proof}

There is also the notion of \emph{lower semi-continuous decomposition}: 
a decomposition $\mathcal D$ of a metric space is  lower semi-continuous  if
for every element $A \in \mathcal D$ and for every $\ep > 0$
there is a nbhd $V$ of $A$ such that if 
some decomposition element $B$ intersects $V$, then 
$A$ is in the $\ep$-nbhd  of $B$.
A decomposition of a metric space is \emph{continuous} if it is upper and lower semi-continuous, see Figure~\ref{lscdecomp}.
We will not study decompositions which are only lower semi-continuous.

\begin{figure}[h!]
\begin{center}
\epsfig{file=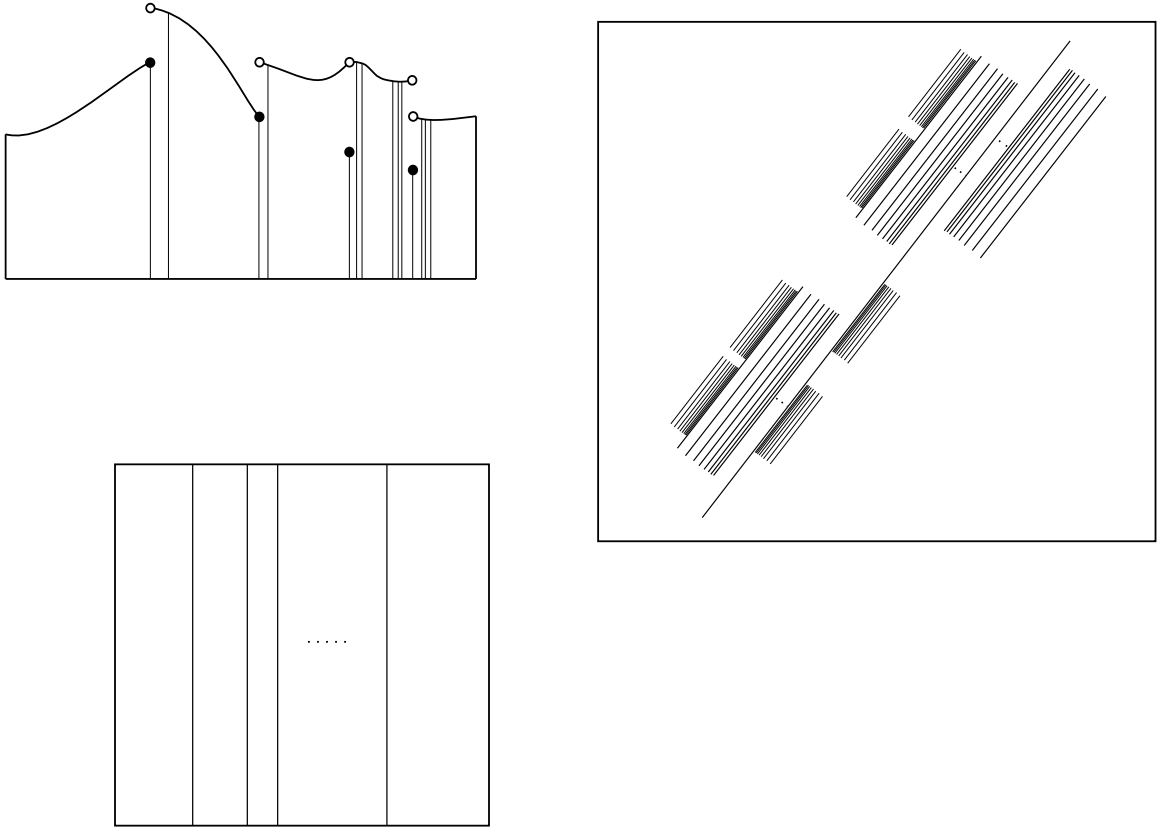, height=11cm}
%\put(0.2, 4){$S$}
%\put(-7.1, 3.6){$g(\CP^{2k}) - R$}
%\put(-7.1, 0.5){$T \circ g(\CP^{2k}) - T(R)$}
\end{center} 
\caption{A lower semi-continuous, an upper semi-continuous and a continuous 
decomposition.
In each of the cases the non-degenerate decomposition elements are line segments, which
 converge to other line segments. The dots indicate convergence.
Only the non-singleton decomposition elements are sketched.
The lower semi-continuous decomposition
consists of decomposing 
the area under the graph of a lower semi-continuous function
into
vertical line segments, there are no singletons among the decomposition elements and
the decomposed space itself is not closed.
The upper semi-continuous and continuous decompositions
are decompositions of the rectangle. Only the upper semi-continuous decomposition
has singletons.}
\label{lscdecomp}
\end{figure}

\begin{thm}
Let $X$ be a $T_3$ space and  $\mathcal D$ is a closed usc decomposition.
If $A_n \in \mathcal D$ is a sequence of decomposition elements and $A \in \mathcal D$ are such that
$A \cap \liminf A_n \neq \emptyset$, then $\limsup A_n \subset A$.
\end{thm}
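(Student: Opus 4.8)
The plan is to argue by contradiction, fixing a point where $A$ meets the lower limit and a hypothetical point of the upper limit lying outside $A$, and then to play the defining property of a closed usc decomposition against the separation afforded by $T_3$.

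First I would choose a point $p \in A \cap \liminf A_n$, which exists by hypothesis. Suppose, toward a contradiction, that $\limsup A_n \not\subset A$ and pick $q \in \limsup A_n$ with $q \notin A$. Since every decomposition element is closed, $A$ is a closed set not containing the point $q$, so regularity of $X$ lets me separate them by disjoint open sets $U \supset A$ and $W \ni q$.

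Next I would invoke the closed usc property at the element $A$ with the nbhd $U$. By the definition (equivalently by Lemma~\ref{saturated}(2)) there is a nbhd $V$ of $A$ with $V \subset U$ such that every $B \in \mathcal D$ meeting $V$ satisfies $B \subset U$; indeed one may take $V$ to be a saturated open set, in which case this trapping property is immediate from the fact that $\mathcal D$ is a partition. Since $p \in A \subset V$, the set $V$ is in particular a nbhd of $p$, and as $p \in \liminf A_n$ all but finitely many $A_n$ meet $V$. By the trapping property there is then an index $N$ with $A_n \subset U$ for every $n \geq N$.

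Finally I would exploit $q \in \limsup A_n$: the nbhd $W$ of $q$ meets infinitely many $A_n$, hence meets some $A_n$ with $n \geq N$. For that single index we have both $A_n \subset U$ and $A_n \cap W \neq \emptyset$, whence $\emptyset \neq A_n \cap W \subset U \cap W = \emptyset$, a contradiction. Therefore $q \in A$, and since $q$ was an arbitrary point of $\limsup A_n$, we conclude $\limsup A_n \subset A$. I do not expect a genuine obstacle here, since the argument uses only that $A$ is closed and the trapping nbhd; the one point demanding care is matching the ``all but finitely many'' coming from $\liminf$ against the ``infinitely many'' coming from $\limsup$, so that a single index $n \geq N$ falls in both regimes — which is exactly why the separating sets $U, W$ and the trapping nbhd $V$ should be fixed before either limit condition is applied.
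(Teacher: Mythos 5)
Your proof is correct, and it runs on the same engine as the paper's: argue by contradiction, use $T_3$ to produce disjoint open sets, invoke the usc trapping property once, and then clash the ``all but finitely many'' indices from $\liminf$ against the ``infinitely many'' from $\limsup$. The one genuine difference is where you anchor the argument: the paper separates the point $x \in A \cap \liminf A_n$ from the decomposition element $D$ containing the bad point $y \in \limsup A_n - A$, and then takes a \emph{saturated} nbhd of $D$ via Lemma~\ref{saturated}(2), so that the tail of the sequence (which meets the nbhd of $x$) must miss that saturated set while $y \in \limsup A_n$ forces infinitely many $A_n$ to hit it. You instead separate the point $q$ from the closed set $A$ and apply the usc definition directly at $A$, trapping the tail $A_n \subset U$ and contradicting $U \cap W = \emptyset$. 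Your version is marginally more economical: you never need to introduce the element $D$ containing $q$, and you use only the defining property of a closed usc decomposition at $A$ rather than the saturation lemma (which you correctly note is equivalent). Both proofs use regularity in the same way (point versus closed decomposition element), so neither gains or loses generality; the two arguments are mirror images of one another.
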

\begin{proof}
Suppose there is a point $x \in A$ such that $x \in \liminf A_n$ as well.
By contradiction suppose that $\limsup A_n \nsubseteq A$, this means that a point $y \in \limsup A_n$ is such that
$y \notin A$. Since $y \in D$ for a decomposition element, we get $D \neq A$ so $D$ is disjoint from the decomposition element $A$.
The space $X$ is $T_3$, the sets $D$ and $\{x\}$ are closed
so there is a nbhd $U$ of $D$ and a nbhd $V$ of $x$ which are
disjoint from each other.
We also have a nbhd $W \subset U$ of $D$ which is a union of decomposition elements by Lemma~\ref{saturated}.
Since $x \in  \liminf A_n$, we have that for an integer $k$ the sets $A_k, A_{k+1}, \ldots$ intersect $V$.
The nbhd $W$ is saturated, this implies that a decomposition element does not intersect both of $W$ and $V$.
So $A_k, A_{k+1}, \ldots$ are disjoint from $W$.
This contradicts to that  $W$ is a nbhd of $y$ and so infinitely many $A_n$ has to intersect $W$
because $y \in \limsup A_n$.
\end{proof}

An other example for a usc decomposition is the equivalence relation on $S^n$ defined by $x \sim -x$. Here the decomposition elements are not 
connected  and the decomposition space is the projective space $\RP^n$.
Or another example is the closed usc decomposition of $\R^2$, where the two non-singleton decomposition elements are 
the two arcs of the graph of the function $x \mapsto 1/x$, all the other decomposition elements are singletons.
The decomposition space is homeomorphic to 
$$A \cup_{\phi} B \cup_{\psi} A',$$
where $A$ and $A'$ are open disks, each of them   with one additional point  in its frontier denoted by
$a$ and $a'$ respectively.
The space $B$ is an open disk with two 
additional points $b, b'$ in its frontier and the gluing homeomorphisms are $\phi \co \{a\} \to \{b\}$ and 
$\psi \co \{a'\} \to \{b'\}$.
If a decomposition is given, then we would like to understand the decomposition space as well.

\begin{prop}
The decomposition space of a closed usc decomposition of a normal space is $T_4$.
\end{prop}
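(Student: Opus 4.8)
The plan is to verify the two ingredients of $T_4$ separately: the $T_1$ axiom and normality. The $T_1$ part is immediate, since a closed usc decomposition has closed elements, so by the earlier proposition (the decomposition space is $T_1$ whenever the decomposition elements are closed) the space $X_{\mathcal D}$ is already $T_1$. Thus the real work is to separate two disjoint closed subsets of $X_{\mathcal D}$ by disjoint open sets.

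So I would start with disjoint closed sets $C_1, C_2 \subset X_{\mathcal D}$. Since $\pi$ is continuous, the preimages $\pi^{-1}(C_1)$ and $\pi^{-1}(C_2)$ are disjoint closed subsets of $X$, and each is saturated, being a union of decomposition elements. Normality of $X$ then supplies disjoint open sets $U_1 \supset \pi^{-1}(C_1)$ and $U_2 \supset \pi^{-1}(C_2)$.

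The hard part — and the only place where the decomposition hypotheses enter — is that $U_1$ and $U_2$ need not be saturated, so their $\pi$-images need not be open in $X_{\mathcal D}$. I would repair this by passing to the saturations $V_i = \cup\{ D \in \mathcal D : D \subset U_i \}$. By Lemma~\ref{saturated}(3) each $V_i$ is open, and it is saturated by construction. They remain disjoint, because a single nonempty $D$ cannot be contained in both $U_1$ and $U_2$; and they still contain the preimages, because $\pi^{-1}(C_i)$ is saturated, so every decomposition element it meets lies in $\pi^{-1}(C_i) \subset U_i$ and is therefore one of the $D$'s making up $V_i$. This saturation step is exactly where upper semi-continuity is used, and it is what will upgrade $T_1$ to genuine normality.

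Finally I would push everything down to the quotient. Since each $V_i$ is saturated and open, $\pi^{-1}(\pi(V_i)) = V_i$ is open, so $\pi(V_i)$ is open in $X_{\mathcal D}$. From $\pi^{-1}(C_i) \subset V_i$ I get $C_i \subset \pi(V_i)$, and the saturated disjointness of $V_1, V_2$ forces $\pi(V_1) \cap \pi(V_2) = \emptyset$: a common image point would pull back to a single decomposition element lying in $V_1 \cap V_2$, which is empty. These two disjoint open sets separate $C_1$ and $C_2$, giving normality, and combined with the $T_1$ property this yields $T_4$.
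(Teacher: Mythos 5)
Your proof is correct and takes essentially the same route as the paper's: pull the disjoint closed sets back under $\pi$, use normality of $X$ together with Lemma~\ref{saturated} to obtain disjoint \emph{saturated} open neighborhoods, push these forward to open sets in $X_{\mathcal D}$, and finish with the $T_1$ property coming from closedness of the decomposition elements. The only difference is one of detail: the paper asserts the existence of disjoint saturated neighborhoods in a single phrase, while you make explicit how they are produced (saturating the normality neighborhoods via Lemma~\ref{saturated}(3) and checking disjointness and containment are preserved), which is exactly the argument the paper leaves implicit.
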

\begin{proof}
We have to show that if $\mathcal D$ is a usc decomposition of a normal space $X$, then any two disjoint closed sets  in the space
$X_{\mathcal D}$ can be separated by open sets. 
Let $A, B$ be disjoint closed sets in $X_{\mathcal D}$. Then $\pi^{-1}(A)$ and  $\pi^{-1}(B)$
are disjoint closed sets and by being $X$ normal and  by Lemma~\ref{saturated} they have disjoint saturated nbhds $U_1$ and $U_2$.
Taking $\pi(U_1)$ and $\pi(U_2)$ we get disjoint nbhds of $A$ and $B$. The decomposition elements are closed so $X_{\mathcal D}$ is $T_1$, which finally implies that $X_{\mathcal D}$ is $T_4$.
\end{proof}

If a space $X$ is not normal, then it is easy to define such closed usc decomposition, where the decomposition space
is even not $T_2$. Take two disjoint closed sets $A, B$ in $X$ which can not be separated by open sets. 
For example the direct product of the Sorgenfrei line with itself is not normal and choose the points with rational and irrational 
coordinates in the antidiagonal respectively, to have two closed sets $A$ and $B$.
These two sets are the two non-singleton elements of the decomposition $\mathcal D$, other elements are singletons.
Then $\mathcal D$ is closed usc but $X_{\mathcal D}$ is not $T_2$ because $\pi(A)$ and $\pi(B)$ can not be separated by open sets.

\begin{defn}
Let $\mathcal D$ be a decomposition of the space $X$. 
A decomposition  is \emph{finite} if it has only finitely many non-degenerate elements
and \emph{countable} if it has countably many
non-degenerate elements.
A decomposition is \emph{monotone} if  every decomposition element is  connected.
If $X$ is a metric space, then a decomposition is \emph{null} if 
the decomposition elements are bounded and
for every $\ep > 0$ there is only a finite number of elements whose diameter is greater than $\ep$.
\end{defn}

\begin{prop}
Let $\mathcal D$ be a decomposition and suppose that all elements are closed.
If $\mathcal D$ is finite, then it is a closed usc decomposition.
\end{prop}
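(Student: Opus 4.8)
The plan is to prove the proposition by verifying one of the equivalent characterizations of a closed usc decomposition furnished by Lemma~\ref{saturated}, rather than unwinding the original definition directly. Since all decomposition elements are assumed closed, it suffices to check condition~(4): for every closed set $F \subset X$, the saturation $\cup\{D \in \mathcal D : D \cap F \neq \emptyset\}$ is closed. I choose this formulation because it turns the problem into a direct statement about closed sets in $X$, into which the finiteness hypothesis can be fed cleanly.

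The key step is to compute the saturation explicitly. Let $H_1, \ldots, H_k$ denote the finitely many non-degenerate elements, so that every remaining element is a singleton. I claim that for any closed $F$,
\[
\cup\{D \in \mathcal D : D \cap F \neq \emptyset\} = F \cup \bigcup_{i : H_i \cap F \neq \emptyset} H_i.
\]
The inclusion $\supseteq$ is immediate, since every point of $F$ lies in the decomposition element containing it, which meets $F$, and each $H_i$ meeting $F$ is among the saturating elements. For $\subseteq$, take any $D$ with $D \cap F \neq \emptyset$: if $D$ is non-degenerate it equals some $H_i$ with $H_i \cap F \neq \emptyset$, and if $D$ is a singleton $\{x\}$ then $x \in F$, whence $D \subset F$. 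This identity isolates the only role played by the singletons: collectively they contribute nothing beyond $F$ itself.

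Once the identity is in hand the conclusion follows at once. The set $F$ is closed by hypothesis, each $H_i$ is closed by assumption, and there are only finitely many of them, so the right-hand side is a finite union of closed sets and is therefore closed. This verifies condition~(4) of Lemma~\ref{saturated}, and by that lemma $\mathcal D$ is a closed usc decomposition.

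I do not expect a genuine obstacle here. The only point needing care is the verification of the set identity above, and in particular the observation that the singletons meeting $F$ add exactly $F$ and nothing more, so that finiteness is used solely to keep the union of the relevant non-degenerate elements closed. Without finiteness an infinite union of the closed sets $H_i$ need not be closed, which is precisely where the hypothesis enters and where a naive attempt would break down.
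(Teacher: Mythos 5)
Your proof is correct and follows essentially the same route as the paper: both verify condition (4) of Lemma~\ref{saturated} by observing that the saturation of a closed set $F$ is $F$ together with the finitely many closed non-degenerate elements meeting $F$, hence a finite union of closed sets. Your write-up merely makes the set identity (and the role of the singletons) explicit where the paper states it in one line.
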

\begin{proof}
Let $C \subset X$ be a closed subset, then $\pi^{-1}(\pi(C))$
is closed because
it is the finite union of the closed set $C$ and 
the  non-degenerate elements 
which intersect $C$.
Then by Lemma~\ref{saturated} (4) the statement follows.
\end{proof}

\begin{prop}
If $\mathcal D$ is a closed and null decomposition of a metric space, then it is usc.
\end{prop}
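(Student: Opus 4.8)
The plan is to verify one of the equivalent conditions of Lemma~\ref{saturated}: since the elements are already assumed closed, establishing any of (2)--(5) shows that $\mathcal D$ is a (closed) usc decomposition. I would aim for condition (4), namely that for each closed set $F \subset X$ the saturation $F^{*} := \cup\{ D \in \mathcal D : D \cap F \neq \emptyset \}$ is closed, since this is the condition into which the metric notion of nullity feeds most directly.

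To show $F^{*}$ is closed I would take a point $p$ in its closure together with a sequence $p_k \in F^{*}$ with $p_k \to p$; writing $D_k$ for the decomposition element containing $p_k$, the condition $D_k \cap F \neq \emptyset$ lets me choose $q_k \in D_k \cap F$. The argument then splits according to whether a single element absorbs infinitely many of the $p_k$. If some fixed element $D^{*}$ contains infinitely many $p_k$, then along that subsequence $p_k \to p$ forces $p \in D^{*}$ because $D^{*}$ is closed; since $D^{*} \cap F \neq \emptyset$ this gives $p \in D^{*} \subset F^{*}$.

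The remaining case is where nullity does the work. If no single element contains infinitely many $p_k$, then each element contains only finitely many of them, so $k \mapsto D_k$ has finite fibres and I may pass to a subsequence along which the $D_k$ are pairwise distinct. Because the decomposition is null, for each $\ep > 0$ only finitely many elements have diameter exceeding $\ep$, and distinctness of the $D_k$ then forces $\operatorname{diam} D_k \to 0$. Hence $d(p_k, q_k) \le \operatorname{diam} D_k \to 0$, which combined with $p_k \to p$ yields $q_k \to p$; since each $q_k$ lies in the closed set $F$, the limit $p$ lies in $F$, so the element containing $p$ meets $F$ at $p$ itself and $p \in F^{*}$. In every case $p \in F^{*}$, so $F^{*}$ is closed and Lemma~\ref{saturated} gives the conclusion.

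I expect the main obstacle to be organizing the dichotomy cleanly: nullity only controls the diameters of \emph{distinct} elements, so one must separate off the possibility that the approximating points accumulate inside one element, where the diameter bound is useless and closedness of that element does the work instead. The boundedness half of the null hypothesis is what guarantees each diameter is finite, so that ``$\operatorname{diam} D_k \to 0$'' is meaningful; and it is worth noting that the argument uses only closedness of the elements, not compactness, which is why the argument naturally establishes the closed usc property via Lemma~\ref{saturated}.
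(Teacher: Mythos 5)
Your proof is correct, and it takes a genuinely different route from the paper's. The paper verifies the defining neighborhood condition of a usc decomposition directly: given $D \in \mathcal D$ and a nbhd $U$, it first asserts that the elements are compact (deducing this from boundedness), finds $\ep > 0$ so that the $\ep$-nbhd of $D$ lies in $U$, uses nullity to list the finitely many elements $D_1, \ldots, D_n$ other than $D$ of diameter greater than $\ep/4$, and sets $\de$ to be the minimum of $\ep/4$ and the distances from $D$ to the $D_i$; a triangle-inequality computation then shows that any element coming $\de$-close to $D$ lies in the $\ep$-nbhd of $D$, hence in $U$. You instead feed nullity into condition (4) of Lemma~\ref{saturated} by a sequential argument, and your dichotomy (one element absorbing infinitely many $p_k$, versus pairwise distinct $D_k$ with $\mathrm{diam}\thinspace D_k \to 0$) is handled correctly. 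The trade-off is worth spelling out. The paper's route nominally produces the compactness that its definition of usc demands, but only via the claim that closed bounded subsets of a metric space are compact, which holds in $\R^n$ by Heine--Borel yet fails in general metric spaces; compactness of $D$ is also used silently both to fit an $\ep$-nbhd of $D$ inside $U$ and to make the distances $\mathrm{dist}(D, D_i)$ positive. Your route avoids compactness entirely and is valid in an arbitrary metric space, but, as you note yourself, it delivers the \emph{closed} usc property of Lemma~\ref{saturated} rather than the compactness of elements built into the paper's definition of usc. In general no argument could do better: a closed null decomposition need not have compact elements (take the decomposition of an infinite-dimensional Banach space whose only non-degenerate element is the closed unit ball), so your version is the one that is literally true at this level of generality, and the two conclusions coincide exactly in spaces where closed bounded sets are compact.
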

\begin{proof}
Denote the metric by $d$.
All the decomposition elements
are compact because
they are bounded.
Let $U$ be a nbhd of a $D \in \mathcal D$, then 
there is an $\ep >0$ such that
the $\ep$-nbhd of $D$ is in $U$.
Since $\mathcal D$ is null, there are only finitely many
decomposition elements $D_1, \ldots, D_n$
whose diameter is greater than $\ep/4$ and  $D_i \neq D$.
Let $\de$ be the minimum of $\ep/4$ and the distances
between 
$D$ and the $D_i$s.
If $D' \in \mathcal D$
is such that the distance between $D'$ and $D$ is less than $\de$, then
$D'$ is in the $\ep$-nbhd of $D$:
there are $x \in D$ and $y \in D'$ such that
$d(x, y) < \de$ so for every $a \in D'$
\begin{multline*}
\inf \{ d(a, b) :  b \in D \}
\leq
d(a, y) + d(y, x) + \inf \{ d(x, b) :  b \in D \}  
= d(a, y) + d(y, x) \leq \\ {\mathrm {diam}} \thinspace D' + \de
\leq \ep/2,
\end{multline*}
which means that 
$D'$ is in the $\ep$-nbhd of $D$ so $D' \subset U$.
\end{proof}

\begin{prop}
Let $\mathcal D$ be a usc decomposition of a space $X$.
\begin{enumerate}[\rm (1)]
\item
If $X$ is $T_2$, then $X_{\mathcal D}$ is $T_2$ as well.
\item
If $X$ is regular, then $X_{\mathcal D}$ is $T_3$.
\end{enumerate}
\end{prop}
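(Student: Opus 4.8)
The plan is to reduce both parts to a separation statement in $X$ for a decomposition element and a disjoint closed (or compact) saturated set, and then to transport the separation down to $X_{\mathcal D}$. Two facts will be used repeatedly. First, a usc decomposition is in particular a closed usc decomposition, so Lemma~\ref{saturated} applies: every nbhd of a decomposition element contains a saturated nbhd, and the map $\pi \co X \to X_{\mathcal D}$ is closed. Second, since all elements of $\mathcal D$ are closed, $X_{\mathcal D}$ is $T_1$ by the $T_1$ proposition above, so for (2) only regularity will remain to be checked.

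For (1), I would note that a point of $X_{\mathcal D}$ other than $\pi(A)$ is of the form $\pi(B)$ with $B \in \mathcal D$, $B \neq A$, so $A$ and $B$ are disjoint and, being elements of a usc decomposition, compact. Since $X$ is $T_2$, I can separate these two disjoint compact sets by disjoint open sets $U \supset A$ and $V \supset B$. Before descending to the quotient I pass to saturated neighborhoods: by Lemma~\ref{saturated} pick saturated open $W_A$ and $W_B$ with $A \subset W_A \subset U$ and $B \subset W_B \subset V$, so that $W_A \cap W_B \subset U \cap V = \emptyset$. As $W_A$ and $W_B$ are saturated and disjoint, their images are disjoint open sets in $X_{\mathcal D}$ separating $\pi(A)$ and $\pi(B)$, giving the $T_2$ property.

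For (2), let $p = \pi(A)$ and let $C \subset X_{\mathcal D}$ be closed with $p \notin C$, and set $F := \pi^{-1}(C)$, a closed saturated set disjoint from the compact element $A$. The crucial step, and the one place where I must exploit compactness rather than normality, is to separate the compact set $A$ from the disjoint closed set $F$ by an open set whose closure still avoids $F$: for each $a \in A$ regularity yields an open $O_a \ni a$ with $\overline{O_a} \cap F = \emptyset$, and a finite subcover of $A$ gives an open $U \supset A$ with $\overline{U} \cap F = \emptyset$. Shrinking $U$ to a saturated open $W$ with $A \subset W \subset U$ (Lemma~\ref{saturated}) keeps $\overline{W} \cap F = \emptyset$, i.e.\ $\overline{W} \subset \pi^{-1}(X_{\mathcal D} \setminus C)$.

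It then remains to push this down. The set $G := \pi(W)$ is an open nbhd of $p$ because $W$ is saturated and open. Since $\pi$ is closed, $\pi(\overline{W})$ is a closed set containing $\pi(W)$, whence $\overline{G} \subset \pi(\overline{W}) \subset X_{\mathcal D} \setminus C$; therefore $G$ and $X_{\mathcal D} \setminus \overline{G}$ are disjoint open sets containing $p$ and $C$. This establishes regularity of $X_{\mathcal D}$, and with the $T_1$ property already in hand it follows that $X_{\mathcal D}$ is $T_3$. I expect the main obstacle to be exactly this reliance on regularity alone in (2): disjoint closed sets need not separate, so the argument genuinely depends both on the compactness of $A$ (to upgrade point/closed-set separation into a closed-neighborhood separation of $A$ from $F$) and on the closedness of $\pi$ (to bound $\overline{G}$ by $\pi(\overline{W})$).
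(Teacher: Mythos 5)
Your proof is correct and follows essentially the same route as the paper's: exploit compactness of the decomposition elements to separate them (from each other, or from a disjoint saturated closed set) in $X$, pass to saturated neighborhoods via Lemma~\ref{saturated}, and push the separation down through the closed quotient map $\pi$. The paper's own proof is only a two-line sketch ending in ``the statement follows easily,'' and your argument is the natural filling-in of exactly those details.
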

\begin{proof}
The decomposition elements are compact so
every  $\pi^{-1}(a)$ and $\pi^{-1}(b)$ 
for different $a, b \in X_{\mathcal D}$ can be separated by open sets.
The statement follows easily.
\end{proof}

 \begin{prop}
Let $\mathcal D$ be a usc decomposition of a $T_2$ space $X$.
The decomposition $\mathcal D'$ whose elements are the connected components of the elements of $\mathcal D$
is a monotone usc decomposition.
\end{prop}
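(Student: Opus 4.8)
The plan is to verify the two defining properties of a usc decomposition separately, starting with the easy observations. Monotonicity is immediate: every element of $\mathcal D'$ is by construction a connected component of some $D \in \mathcal D$, hence connected. To meet the standing requirement that the elements of a usc decomposition be closed and compact, I would note that a connected component $C$ of $D$ is always closed in $D$ (the closure of a connected set is connected, so a component coincides with its own closure), hence closed in $X$ since $D$ is closed in the $T_2$ space $X$; being a closed subset of the compact set $D$, it is moreover compact. (That $\mathcal D'$ is a decomposition at all is clear, since the components of each $D$ partition $D$ and the elements of $\mathcal D$ partition $X$.)

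The substance is upper semi-continuity of $\mathcal D'$. The main obstacle is apparent at once: if $C$ is a component of $D$ and $U$ is a nbhd of $C$, then $U$ need not be a nbhd of the whole element $D$, since other components of $D$ may lie far outside $U$; so the usc property of $\mathcal D$ cannot be applied to the pair $D$, $U$ directly. To circumvent this I would first \emph{isolate} $C$ inside a relatively clopen piece of $D$. Because $D$ is compact Hausdorff, its connected components and quasi-components coincide, so for the open set $U \cap D$ containing $C$ there is a set $K$, clopen in $D$, with $C \subset K \subset U \cap D$. Concretely one fixes $x_0 \in C$, chooses for each $p \in D \setminus U$ a clopen $K_p \ni x_0$ with $p \notin K_p$, covers the compact set $D \setminus U$ by finitely many of the sets $D \setminus K_p$, and takes $K$ to be the intersection of the corresponding $K_p$; the connected set $C$ meets the clopen $K$ at $x_0$, hence $C \subset K$.

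Now $K$ and $D \setminus K$ are disjoint compact sets, so by Hausdorffness they admit disjoint open nbhds $O_1 \supset K$ and $O_2 \supset D \setminus K$ in $X$. The key point is that the open set $\tilde U := (U \cap O_1) \cup O_2$ is a genuine nbhd of the \emph{whole} element $D$ (since $K \subset U \cap O_1$ and $D \setminus K \subset O_2$), while still recording the splitting of $D$ through the two disjoint opens $U \cap O_1$ and $O_2$. Applying the usc property of $\mathcal D$ to $D$ and $\tilde U$ yields a nbhd $\tilde V \subset \tilde U$ of $D$ such that every $B \in \mathcal D$ meeting $\tilde V$ satisfies $B \subset \tilde U$. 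I would then set $V := \tilde V \cap U \cap O_1$, which is a nbhd of $C$ contained in $U$.

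Finally I would check that $V$ does the job for $\mathcal D'$. Suppose $C' \in \mathcal D'$ meets $V$ and write $C'$ as a component of some $D' \in \mathcal D$; then $D'$ meets $\tilde V$, so $D' \subset \tilde U = (U \cap O_1) \sqcup O_2$, the two pieces being disjoint because $U \cap O_1 \subset O_1$ and $O_1 \cap O_2 = \emptyset$. This presents $D'$ as the disjoint union of two sets clopen in $D'$, and the connected set $C'$, which meets $U \cap O_1 \supset V$, must lie entirely in the piece $U \cap O_1 \subset U$. Hence $C' \subset U$, which is exactly upper semi-continuity for $\mathcal D'$. The step carrying the whole argument is the isolation of $C$ by a clopen subset $K$ of $D$ together with the passage from the nbhd of $C$ to the nbhd $\tilde U$ of all of $D$; this is where the compactness of the elements and the $T_2$ hypothesis are genuinely used, and it is the part I expect to require the most care to state cleanly.
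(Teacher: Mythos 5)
Your proof is correct, and it is in fact more careful than the paper's own argument at the one delicate point. Both proofs share the same skeleton: cover the parent element $D$ by a union of two disjoint open sets, one of which contains the given component and lies inside $U$; apply upper semi-continuity of $\mathcal D$ to that union to get a neighborhood $\tilde V$ of $D$; and then use connectedness to force any component $C'$ of an element meeting $\tilde V$ inside the first piece to stay in that piece, hence in $U$. The difference is how the two disjoint open sets are produced. The paper asserts that $D - D'$ is closed in $D$ (where $D'$ is the component in question) and then separates $D'$ from $D - D'$ by disjoint open neighborhoods. That assertion is false in general: $D'$ is closed in $D$, so $D - D'$ is \emph{open} in $D$, and it need not be closed --- for instance, if $D$ is homeomorphic to a Cantor set, the complement of a point-component is not closed, and in that situation the required disjoint neighborhoods $U' \supset D'$ and $U'' \supset D - D'$ do not exist at all, since every open set containing $D'$ must meet $D - D'$ whenever $D'$ is not open in $D$. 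Your intermediate step --- isolating $C$ in a set $K$ clopen in $D$ with $C \subset K \subset U \cap D$, via the coincidence of components and quasi-components in a compact Hausdorff space, and then separating the two disjoint \emph{compact} sets $K$ and $D - K$ --- is exactly what is needed to make the separation legitimate; from that point on your argument and the paper's proceed identically. So what your route buys is rigor: it is the repaired version of the paper's proof, and the clopen-isolation step you flagged as the heart of the matter is indeed the ingredient the paper is implicitly (and incorrectly) bypassing.
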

\begin{proof}
Take an element $D' \in \mathcal D'$ and denote by $D$ the decomposition element in 
$\mathcal D$ which contains $D'$. Suppose $D \neq D'$. 
Then $D - D'$ is closed in $D$ so  it is closed in $X$.
Let $U$ be a nbhd of $D'$.
Then there exists a nbhd $U' \subset U$ of $D'$ which is disjoint from a nbhd $U''$ of 
the closed set $D - D'$.
By the usc property we can find a
nbhd $V$ of 
$D$ such that $V  \subset U' \cup U''$ and 
if a $C \in \mathcal D$ intersects $V$, then $C \subset U' \cup U''$.
If $C' \in \mathcal D'$ intersects $V \cap U'$, then 
the element $C \in \mathcal D$ which contains 
$C'$ as a connected component
intersects
$V$ hence $C \subset U' \cup U''$.
Since $U'$ and $U''$ are disjoint, the component $C'$ of $C$
is in $U'$ because it intersects $U'$.
We got that $C' \subset U$.
\end{proof}

For example, it follows that the decomposition of a compact $T_2$ space $X$ whose elements are the connected components of the space is 
a usc decomposition. To see this, at first take the decomposition $\mathcal D$, where
$\mathcal H_{\mathcal D} = \{ X \}$ and hence the decomposition has no singletons. This is  usc so we can apply the previous 
proposition.

\begin{prop}
If $X$ is a metric space and $\mathcal D$ is its usc decomposition, then $X_{\mathcal D}$ is metrizable.
If $X$ is separable, then $X_{\mathcal D}$ is also separable.
\end{prop}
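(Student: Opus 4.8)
The plan is to present $X_{\mathcal D}$ as a closed continuous image of the metric space $X$ whose point-preimages are compact, and then to conclude metrizability from the metrization theorem for closed maps (the Hanai--Morita--Stone theorem); separability will be immediate from continuity of $\pi$.

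First I would assemble the structural input. A usc decomposition has closed (indeed compact) elements, so in particular it is a closed usc decomposition, and hence by Lemma~\ref{saturated}(5) the map $\pi \co X \to X_{\mathcal D}$ is a continuous closed surjection. Moreover each fiber $\pi^{-1}(a)$ is a decomposition element $D \in \mathcal D$, which is compact; its frontier in $X$ is a closed subset of $D$ and therefore compact as well.

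For the metrizability I would invoke the following classical fact: if $f \co X \to Y$ is a continuous closed surjection and $X$ is metrizable, then $Y$ is metrizable if and only if the frontier of every point-preimage is compact (equivalently, if and only if $Y$ is first countable). In our situation the point-preimages are themselves compact, so the frontier condition holds trivially and $X_{\mathcal D}$ is metrizable. If one prefers to feed in first countability instead, it can be checked directly: for $A = \pi^{-1}(a)$ and each $n$ the set $U_n$ consisting of the union of those $D \in \mathcal D$ that lie within distance $1/n$ of $A$ is open and saturated by Lemma~\ref{saturated}(3), so $\pi(U_n)$ is an open nbhd of $a$; and given any open $O \ni a$, compactness of $A$ inside the open saturated set $\pi^{-1}(O)$ produces an $\ep > 0$ with the $\ep$-nbhd of $A$ contained in $\pi^{-1}(O)$, whence $\pi(U_n) \subseteq O$ once $1/n < \ep$. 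Thus $\{\pi(U_n)\}_n$ is a countable nbhd base at $a$.

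For separability, if $S \subset X$ is countable and dense, then $\pi(S)$ is countable, and since $\pi$ is continuous and surjective $\pi(S)$ is dense in $X_{\mathcal D}$; together with metrizability this even gives second countability. The step I expect to be the real obstacle is the metrizability itself, because there is no naive explicit metric on $X_{\mathcal D}$: the obvious candidate, the Hausdorff distance between decomposition elements, does \emph{not} induce the quotient topology. Indeed usc convergence is one-sided, so a sequence of small elements accumulating near one end of a long element $A$ converges to $\pi(A)$ in $X_{\mathcal D}$ while remaining Hausdorff-far from $A$. Consequently one genuinely needs the closed-map metrization theorem (or, alternatively, the construction of a $\sigma$-locally finite base to apply Bing--Nagata--Smirnov, using that $X_{\mathcal D}$ is regular by the earlier propositions), rather than an ad hoc distance function.
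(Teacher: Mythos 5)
Your proof is correct and follows essentially the same route as the paper: the paper also applies A.\ H.\ Stone's closed-map metrization theorem (your Hanai--Morita--Stone theorem) to the closed continuous surjection $\pi$, noting that the compact fibers have compact frontiers, and obtains separability by pushing forward a countable dense set. Your direct verification of first countability and the remark about Hausdorff distance are correct extras, but not needed beyond the paper's argument.
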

\begin{proof}
By \cite{St56} if there is a continuous closed map $f$ of a metric space onto a space $Y$ such that 
for every $y \in Y$
the closed set $f^{-1}(y) - {\mathrm {int}}\thinspace f^{-1}(y)$ is compact, then 
$Y$ is metrizable.
But for every $y \in X_{\mathcal D}$ the set $\pi^{-1}(y)$ and so its closed subset $\pi^{-1}(y) - {\mathrm {int}}\thinspace \pi^{-1}(y)$
are compact hence $X_{\mathcal D}$ is metrizable.
Moreover if $X$ is separable, then 
there is a countable subset $S \subset X$ intersecting every open set, 
which gives the countable set $\pi(S)$
intersecting every open set in $X_{\mathcal D}$.
\end{proof}

\bigskip{\Large{\section{Examples and properties of decompositions}}}
\bigskip\large

Usually, we are interested in the topology of the decomposition space
if a decomposition of $X$ is given.
Especially those situations are  stimulating where
the decomposition space turns out to be homeomorphic to $X$.

Let $X = \R$ and let $\mathcal D$ be a decomposition such that $\mathcal H_{\mathcal D}$ 
consists of countably many disjoint compact intervals. 
Then this is a  usc decomposition: any open interval $U \subset \R$ contains at most countably many compact intervals of $\mathcal H_{\mathcal D}$
and 
the infimum of the left endpoints of these intervals could be in $U$ or it could be the left boundary point of $U$. Similarly, we have this for the right 
endpoints. In all cases the union of the decomposition elements being in $U$ is open. 
For an arbitrary open set $U \subset \R$ we have the same, this means we have 
a usc decomposition.
Later we will see, that the decomposition space $X_{\mathcal D}$ is homeomorphic to $\R$. Moreover the decomposition map $\pi \co X \to X_{\mathcal D}$
is approximable by homeomorphisms, which means there are homeomorphisms from $\R$ to $\R$ arbitrarily close to $\pi$ in the sense of uniform 
metric. For example, let $X = \R$ and consider the infinite Cantor set-like construction by taking iteratively the middle third compact intervals
in the interval $[0,1]$. These are countably many intervals and 
define the decomposition ${\mathcal D}$ so that 
the non-degenerate elements are these intervals.
We can obtain this decomposition ${\mathcal D}$
by taking the connected components of $[0,1] - \mbox{Cantor set}$ and then  taking the closure of them.
This  is usc and we will see that the decomposition space is $\R$.

If  $X = \R^2$, then an analogous decomposition is  that $\mathcal H_{\mathcal D}$ consists of 
countably many compact line segments. More generally, let $\mathcal H_{\mathcal D}$ be countably many \emph{flat} arcs, that is
such subsets $A$ of $\R^2$ for which there exist  self-homeomorphisms $h_A$ of $\R^2$ mapping $A$ into the standard compact interval 
$\{ (x, 0) \in \R^2 : 0 \leq x \leq 1 \}$. Such a  decomposition is not necessarily usc, for example take the 
function $f \co [0,1) \to \R$, $f(x) = 1+x$, and the sequence $x_n = 1 -1/n$.
Define the decomposition by $\mathcal H_{\mathcal D} = \{ (x_n, y) : y \in [0, f(x_n) ], n \in \N \}$ and the singletons are
all the other points of $\R^2$. Then $\mathcal H_{\mathcal D}$ consists of countably many straight line segments but
this decomposition is not usc: consider the point $(1, 3/2) \in \mathcal D$ and its $\ep$-nbhds for small $\ep>0$. These 
intersect infinitely many non-degenerate
decomposition elements but none of the elements is a subset of  any of these $\ep$-nbhds.
The decomposition space is not $T_2$:
the points $\pi((1, y))$, where $0 \leq y \leq 2$,
cannot be separated by disjoint nbhds because 
the sequence $\pi((x_n, 0))$ converges to all of them.

However, if $\mathcal D$ is such a decomposition of $\R^2$ 
that $\mathcal H_{\mathcal D}$ consists of countably many {flat} arcs and further we suppose that $\mathcal D$ is usc, then the decomposition space 
$X_{\mathcal D}$ is homeomorphic to $\R^2$ and again $\pi$ can be approximated by homeomorphisms, we will see this later.

We get another interesting example by taking a smooth function with finitely many critical values on a closed manifold $M$. 
Then the decomposition elements are defined to be 
the connected components of the point preimages of the function. This is a monotone  decomposition ${\mathcal D}$ and it is usc because 
the decomposition map $\pi \co M \to M_{\mathcal D}$ is a closed map: in $M$ a closed set is compact, its $\pi$-image is compact as well and 
$M_{\mathcal D}$ is $T_2$ because it is a graph \cite{Iz88, Re46, Sa20} so this $\pi$-image is also closed.

If $X$ is $3$-dimensional, then the possibilites increase tremendously. This is illustrated by the following surprising statement.
\begin{prop}\label{line_decomp}
For every compact metric space $Y$ there exists a monotone usc decomposition of the compact ball $D^3$ such that $Y$ can be embedded into the  decomposition space. 
\end{prop}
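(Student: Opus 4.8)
The plan is to realize $Y$ not as a subset of $D^3$ but as the image $\pi(C)$ of a Cantor set $C \subset D^3$ under the decomposition map, so that the nondegenerate elements of $\mathcal D$ are ``connected thickenings'' of the fibers of a surjection onto $Y$. I start from the classical fact that every compact metric space is a continuous image of the Cantor set: fix a continuous surjection $g \co C \to Y$. Since $C$ is compact and $Y$ is Hausdorff, $g$ is a closed map, hence (by the equivalence of closedness with the usc property in Lemma~\ref{saturated}) the partition of $C$ into the fibers $F_y = g^{-1}(y)$ is a usc decomposition of $C$, and $g$ identifies the quotient $C/\{F_y\}$ homeomorphically with $Y$. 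The obstruction to simply transporting this partition into $D^3$ is that each $F_y$ is a totally disconnected subset of a Cantor set, so the fiber decomposition is not monotone; the whole point is to enlarge each $F_y$ to a continuum $T_y \supset F_y$ inside $D^3$, keeping the $T_y$ pairwise disjoint and the enlarged family usc.

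Next I fix an embedding of $C$ into $D^3$ together with a defining sequence for it: a nested family $M_1 \supset M_2 \supset \cdots$ of compact sets, each a disjoint union of finitely many round $3$-cells, with $M_{n+1} \subset \mathrm{int}\, M_n$, mesh tending to $0$, and $C = \bigcap_n M_n$. The components of $M_n$ form the level-$n$ vertices of the usual Cantor tree, and I choose the covers defining $g$ so that this tree also records the identifications made by $g$: at each level $n$ I group the components of $M_n$ according to the approximate $g$-image of the Cantor points they contain, the grouping being controlled by a tolerance $\ep_n \to 0$. Inside $D^3$ I then connect, scale by scale, the cells in a common group by thin pairwise disjoint tubes (here the ambient dimension $3$ enters: two arcs or bands are generically disjoint, so at each finite stage the finitely many tubes joining the cells of a group can be chosen disjoint and of diameter $<\ep_n$). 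The decomposition element over $y$ is taken to be $F_y$ together with the limit of the tubes joining the cells that contain $F_y$; every other point of $D^3$ is a singleton. Each $T_y$ is connected by construction, so $\mathcal D$ is \emph{monotone}.

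It remains to verify the two global properties. For the embedding of $Y$: once $\mathcal D$ is usc, the space $D^3_{\mathcal D}$ is compact, metrizable and $T_2$ by the earlier propositions, so $\pi|_C \co C \to \pi(C)$ is a closed continuous surjection onto a Hausdorff space, hence a quotient map. Provided the construction guarantees $T_y \cap C = F_y$ and $T_y \cap T_{y'} = \emptyset$ for $y \neq y'$, the partition induced on $C$ is exactly the fiber partition, so $\pi(C) \cong C/\{F_y\} \cong Y$ and $Y$ embeds in $D^3_{\mathcal D}$. For the usc property I would, given an element and a neighborhood $U$ of it, produce a saturated neighborhood inside $U$ as in Lemma~\ref{saturated}(2): since the groupings are dictated by $g$ and the tube diameters are $<\ep_n \to 0$, two elements $T_y, T_{y'}$ can lie close only when $g(y), g(y')$ are close, and then continuity of $g$ together with the nesting $M_{n+1} \subset \mathrm{int}\, M_n$ forces any $T_{y'}$ meeting a suitable level-$n$ neighborhood of $T_y$ to lie wholly inside $U$.

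The main obstacle is reconciling \emph{monotone}, \emph{usc}, and \emph{fibers exactly right} at the same time. At a finite stage $n$ a tube necessarily joins a whole group of cells, and such a group is coarser than a single $g$-fiber; if the tubes were committed permanently, the decomposition elements would be these coarse groups rather than the fibers, destroying the identification with $Y$. The resolution is to perform the joining only in the limit: at stage $n$ one records a connected \emph{saturated} neighborhood of each group rather than a fixed arc, arranges these neighborhoods to be nested and to split apart exactly as the groups refine toward the $g$-fibers, and defines $T_y$ as the intersection of the neighborhoods containing $F_y$. Making this splitting match $g$ while keeping the neighborhoods connected (for monotonicity) and shrinking in a controlled way (for the usc estimate and for disjointness of distinct limits) is the delicate bookkeeping that the full proof must carry out.
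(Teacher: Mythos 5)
Your starting point (an Alexandroff--Hausdorff surjection $g \co C \to Y$ from a Cantor set, then enlarging its fibers to pairwise disjoint continua in $D^3$) is exactly the paper's, but the stage-by-stage tube construction you propose has a genuine obstruction, not merely ``delicate bookkeeping.'' For your limit sets $T_y$ to be connected and well defined as nested intersections, the stage-$n$ grouping of cells must simultaneously (i) keep each fiber $F_y$ inside a single group and (ii) refine, as $n \to \infty$, down to the individual fibers. These two requirements are incompatible. Take $Y = [0,1]$ and $g$ the Cantor--Lebesgue function: adjacent level-$n$ cells of the standard defining sequence always share a fiber (the two endpoints of a removed middle-third interval map to the same dyadic rational), so the transitive closure of the relation ``contains points of a common fiber'' lumps \emph{all} cells of \emph{every} stage into one group. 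Any grouping satisfying (i) therefore never splits, every saturated neighborhood $N_n$ contains all of $C$ together with all tubes, and $\bigcap_n N_n$ is the same set for every $y$: you get one huge element instead of the fiber decomposition. If instead you group by $\ep_n$-approximate image, as in your second paragraph, then (i) fails: a fiber whose image lies near a boundary of the chosen partition of $Y$ is split between groups at infinitely many stages, so the corresponding $T_y$ never becomes connected. There is also a circularity in your repair: you define the elements $T_y$ via \emph{saturated} neighborhoods, but saturation refers to the very decomposition being defined. Finally, note that fibers of $g$ can be uncountable (Cantor sets themselves), so the connecting sets cannot be finitely many arcs added ``in the limit''; they must be genuine continua attached all at once.

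The paper sidesteps the induction entirely with a one-shot join construction. Take a tetrahedron $T \subset D^3$ and two opposite (skew) edges $e$ and $f$, put a Cantor set on each, and fix surjections $\psi_1, \psi_2$ of these Cantor sets onto $Y$. For $x \in Y$ let $D_x$ be the union of \emph{all} straight segments from $\psi_1^{-1}(x)$ to $\psi_2^{-1}(x)$. Each $D_x$ is compact and connected (it is a join), and---this is what replaces all the bookkeeping---any two segments joining $e$ to $f$ with distinct endpoint pairs are disjoint because $e$ and $f$ are skew; since fibers over distinct points of $Y$ are disjoint in both Cantor sets, the sets $D_x$ are pairwise disjoint. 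Upper semicontinuity then follows from a routine sequential compactness argument using the continuity of $\psi_1$ and $\psi_2$, and $x \mapsto \pi(\psi_1^{-1}(x))$ embeds $Y$ into the decomposition space. If you want to rescue your approach, replace the inductive tubes by such a global, endpoint-determined family of connecting sets; as written, your plan cannot be completed.
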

\begin{proof}
Recall that by the Alexandroff-Hausdorff theorem the Cantor set in the $[0,1]$ interval can be mapped surjectively and continuously
onto every compact metric space.
Let $T$ be a tetrahedron in $D^3$, denote two of its non-intersecting edges by 
$e$ and $f$. Identify these edges linearly with $[0,1]$ and let $C_1$ and $C_2$ be the Cantor sets in $e$ and $f$, respectively.
For $i=1, 2$ denote the existing surjective maps of $C_i$ onto $Y$ by 
$\psi_i \co C_i \to Y$.
For every $x \in Y$ take the union of all the line segments in $T$ connecting 
all the points of $\psi_1^{-1}(x)$ to all the points of  $\psi_2^{-1}(x)$.
Denote this subset of $T$ by $D_x$, see Figure~\ref{tetrahedron}. They are compact and connected for all $x \in Y$ and they are pairwise disjoint because
all the lines in $T$ connecting points of $e$ and $f$ are pairwise disjoint.
So we have a monotone usc decomposition with 
$\mathcal H_{\mathcal D} = \{ D_x : x \in Y \}$.
Define the embedding $i$ of $Y$ into $D^3_{\mathcal D}$ by
$i (x ) =    \pi( \psi_1^{-1}(x))$.
This map is injective, closed because $\pi$ is closed and continuous because 
$\psi_1$ is closed.
\end{proof}

\begin{figure}[h!]
\begin{center}
\epsfig{file=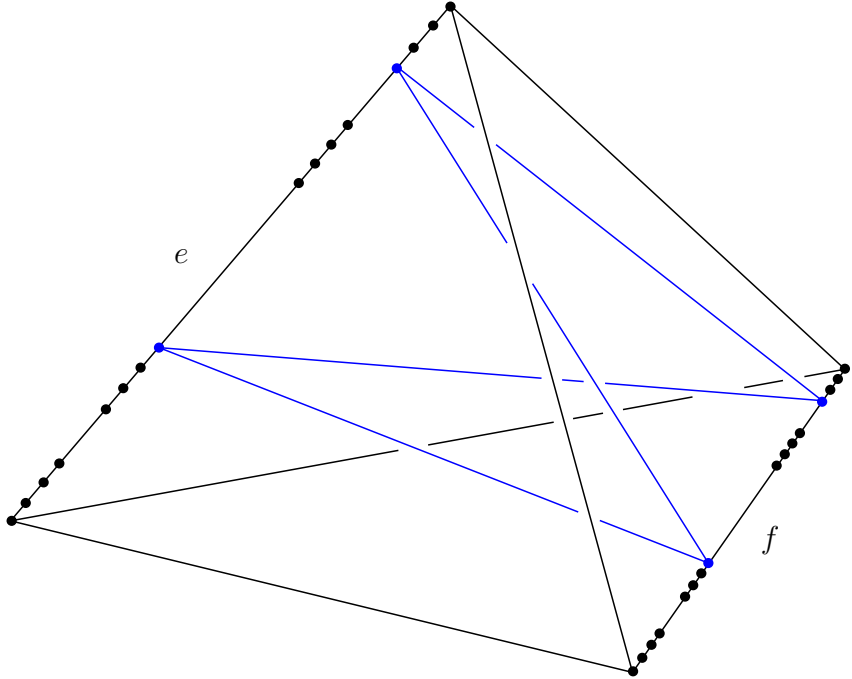, height=9cm}
\put(-1.2, 1.7){$f$}
\put(-9, 5.5){$e$}
\end{center} 
\caption{The tetrahedron $T$, the edges $e$ and $f$ and a set $D_x$ pictured in blue.}
\label{tetrahedron}
\end{figure}

To see further examples in $\R^3$ let us introduce some notions.

\begin{defn}[Defining sequence]
Let $X$ be a connected  $n$-dimensional manifold. A \emph{defining sequence} for a decomposition 
of $X$ 
 is a sequence $$C_1, C_2, \ldots, C_n, \ldots$$ of compact  $n$-dimensional 
 submanifolds-with-boundary 
 in $X$ such that
 $C_{n+1} \subset \interior C_n$.
The decomposition  elements of the defined decomposition
are the connected components of $\cap_{n=1}^{\infty} C_n$ and the other points of $X$ are singletons.
\end{defn}

Obviously a decomposition defined in this way is monotone. 
The set $\cap_{n=1}^{\infty} C_n$ is closed and compact so its connected components are closed and compact as well.
Also the space $\cap_{n=1}^{\infty} C_n$ is $T_2$ hence its decomposition to its connected components is usc.
Then adding all the points of $X - \cap_{n=1}^{\infty} C_n$ to this decomposition  as singletons results our decomposition.
This is usc:
the only thing which is not completely obvious is that 
in a nbhd of an added point the conditions being usc are satisfied or not. But $\cap_{n=1}^{\infty} C_n$ is closed, its complement is open so every such singleton 
has a nbhd disjoint from $\cap_{n=1}^{\infty} C_n$.

\begin{prop}
If all $C_n$ in a defining sequence is connected, then  $\cap_{n=1}^{\infty} C_n$ is connected.
\end{prop}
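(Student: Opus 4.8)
The plan is to argue by contradiction using the finite intersection property of compact sets. Write $C = \cap_{n=1}^{\infty} C_n$. Since each $C_n$ is a compact subset of the manifold $X$ and the sequence is nested, $C$ is a nonempty compact set. First I would suppose that $C$ is disconnected and fix a separation $C = A \cup B$ into two nonempty disjoint sets that are closed in $C$; as $C$ is compact, both $A$ and $B$ are themselves compact.

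The next step exploits that $X$, being a manifold, is Hausdorff: disjoint compact sets in a Hausdorff space can be surrounded by disjoint open sets, so I would choose open $U \supset A$ and $V \supset B$ with $U \cap V = \emptyset$. Now set $K_n = C_n \setminus (U \cup V)$. Each $K_n$ is closed in the compact set $C_n$, hence compact, and $K_{n+1} \subset K_n$ because $C_{n+1} \subset C_n$. Their total intersection is $\cap_{n=1}^{\infty} K_n = C \setminus (U \cup V)$, which is empty since $C = A \cup B \subset U \cup V$.

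A nested sequence of nonempty compact sets has nonempty intersection, so the emptiness of $\cap_{n=1}^{\infty} K_n$ forces $K_N = \emptyset$ for some $N$, i.e.\ $C_N \subset U \cup V$. But then $C_N = (C_N \cap U) \cup (C_N \cap V)$ is a partition of $C_N$ into two disjoint relatively open subsets, each nonempty because $A \subset C_N \cap U$ and $B \subset C_N \cap V$. This contradicts the connectedness of $C_N$, so $C$ must be connected. The only real content is the compactness step: it is exactly the passage from the empty limiting intersection to a single $C_N$ lying inside $U \cup V$ that converts the global separation of $C$ into a separation of one connected $C_N$, and this is where both the compactness and the nestedness of the $C_n$ are essential.
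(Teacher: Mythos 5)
Your proof is correct and follows essentially the same route as the paper's: both argue by contradiction, separate the two pieces of $\cap_{n=1}^{\infty} C_n$ by disjoint open sets $U$ and $V$ in the ambient manifold, and use the finite intersection property of the nested compact sets $C_n \setminus (U \cup V)$ (the paper writes these as $C_n \cap F$ with $F = (X-U)\cap(X-V)$, which is the same set) to find a single connected $C_N$ contained in $U \cup V$ and meeting both, a contradiction. The only cosmetic difference is that you invoke Hausdorffness plus compactness to separate $A$ and $B$, while the paper notes they are closed in $X$ and separates them there; the substance is identical.
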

\begin{proof}
Let $C$ denote the non-empty set $\cap_{n=1}^{\infty} C_n$.
Suppose $C$ is not connected, this means there are disjoint closed non-empty subsets $A, B \subset C$ 
such that $A \cup B = C$. These $A$ and $B$ are closed in the ambient manifold $X$ as well, so there exist disjoint
nbhds $U$  of $A$ and  $V$ of $B$ in $X$.
It is enough to show that for some $n \in \N$ we have  $C_n \subset U \cup V$, because then 
$C_n \cap U \neq \emptyset$, $C_n \cap V \neq \emptyset$ imply that
$C_n$ is not connected, which is a contradiction.
If we suppose that for every $n \in \N$ we have $C_n \cap (X - (U \cup V)) \neq \emptyset$, then 
for every $n$ we have $C_n \cap (X - U) \cap (X-V) \neq \emptyset$, i.e.\ the closed set
$F = (X - U) \cap (X-V)$ and each element of the nested sequence $C_1, C_2, \ldots$ satisfy 
$$C_n \cap F \neq \emptyset.$$
Of course
$$C_{n+1} \cap F \subset C_n \cap F$$
which implies that 
$$F \cap C = F \cap (\cap_{n=1}^{\infty} C_n) =  \cap_{n=1}^{\infty} (C_n \cap F)  \neq \emptyset$$
because
all $C_n \cap F$ is closed in the compact space $C_1$.
But $F \cap C \neq \emptyset$ contradicts to $C \subset U \cup V$.
\end{proof}

The $\pi$-image of the union of non-degenerate elements of a decomposition associated to a defining sequence
is closed and also totally disconnected because if $\cap_{n=1}^{\infty} C_n$ is not connected, then
all the pairs of decomposition elements have disjoint saturated nbhds which yield 
disjoint nbhds of their $\pi$-image.

\subsection{The Whitehead continuum}

One of the most famous such decomposition is related to the so called Whitehead continuum. Its defining sequence consists of solid tori
embedded into each other in such a way that $C_{i+1}$ is a thickened Whitehead double of the center circle of $C_i$, see Figure~\ref{whitehead_decomp}.
The intersection $\cap_{i = 1}^{\infty} C_i$ is a compact subset of $\R^3$, this is the Whitehead continuum, which we denote by
$\mathcal W$. The decomposition consists of the connected 
components of $\mathcal W$ and the singletons in the complement of them. 
If the diameters $d_i$ of the meridians of the tori $C_i$ converges to $0$ as $i$ goes to $\infty$, then
$\mathcal W$ intersects the vertical sheet $S$ in Figure~\ref{whitehead_decomp}
in a Cantor set: $C_i \cap S$ is equal to $2^{i-1}$ copies of disks of diameter $d_i$ nested into each other. The intersection 
$S \cap (\cap_{i = 1}^{\infty}  C_i)$ is then a Cantor set. 
The Whitehead continuum $\mathcal W$ is connected because the $C_i$ tori are connected 
but it is not path-connected.  
%In Figure~\ref{whitehead_cont}
%there are two horizontal line segments, which are intersected by vertical segments at the points of the ``middle third'' Cantor sets.
%One path-connected component of $\mathcal W$ 
%consists of the endpoints of the middle third intervals in these segments and the vertical segments going through them connected to each other as the arrows indicate. 
%Also the long connecting arc
%is in this path-connected component. There are other two path-connected components: they are 
%the points in the two Cantor sets corresponding to such numbers in $[0,1]$ which cannot be written as a finite sum of powers of $1/3$
%together with the vertical segments going through them connected to each other as the arrows indicate.
%So $\mathcal W$ has three path-connected components.
We will see later that the decomposition space $\R^3_{\mathcal W}$  is not homeomorphic to $\R^3$ but taking its direct product with $\R$ we get  $\R^4$.
An important property of $\R^3 - \mathcal W$ is that it is a contractible $3$-manifold, which is not homeomorphic to $\R^3$.

For understanding further properties of this decomposition, we are going to define some notions.

\begin{figure}[h!]
\begin{center}
\epsfig{file=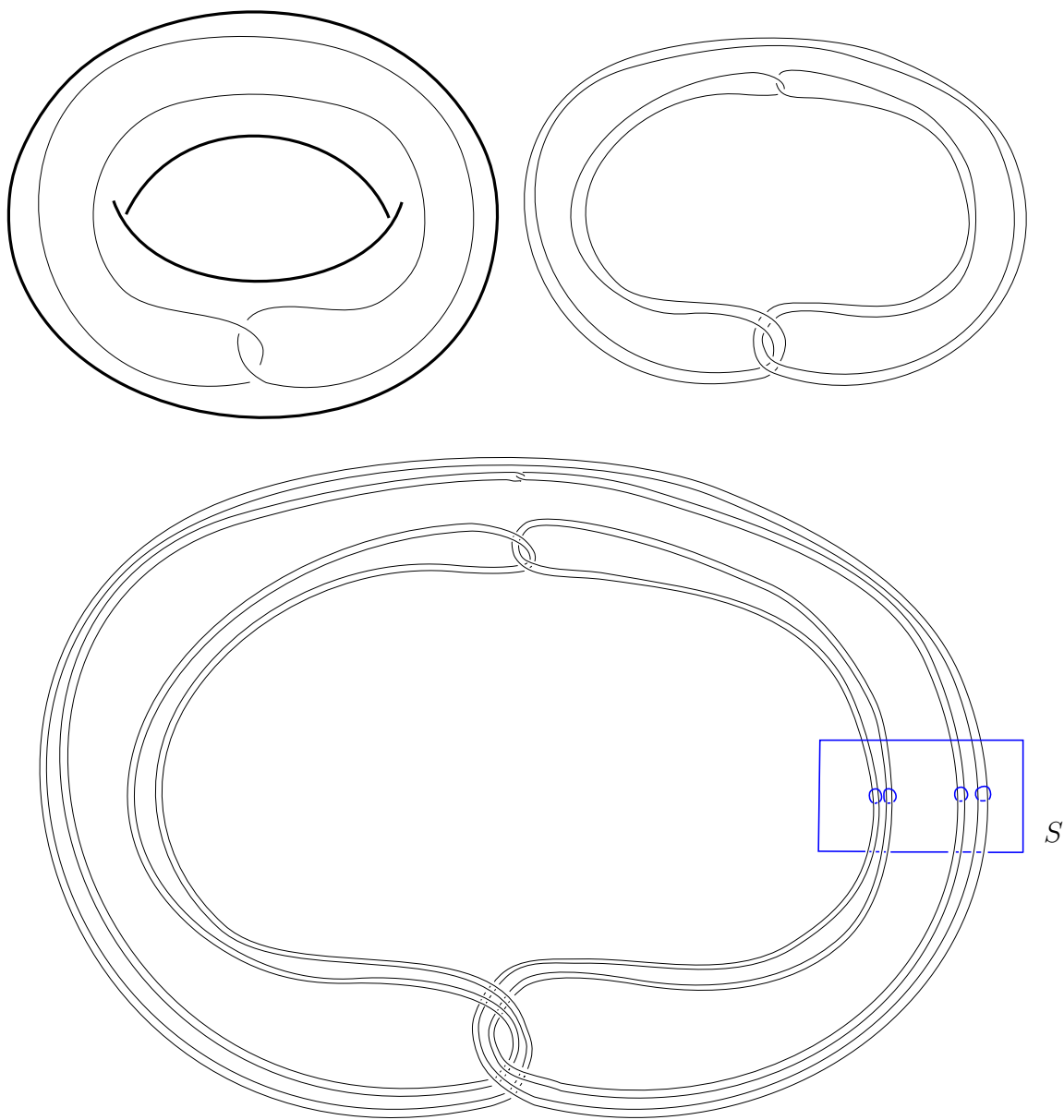, height=16cm}
\put(0.2, 4){$S$}
%\put(-7.1, 3.6){$g(\CP^{2k}) - R$}
%\put(-7.1, 0.5){$T \circ g(\CP^{2k}) - T(R)$}
\end{center} 
\caption{A sketch of the defining sequence of the Whitehead decomposition. The first figure shows the solid torus $C_1$ and the Whitehead double of its 
center circle. The second figure shows the Whitehead double of the center circle of $C_2$. The torus $C_2$ is not shown but we get it
by thickening the Whitehead double in $C_1$. Then thicken the knot in the second figure (so we get the solid torus $C_3$) and take its center circle.
Take the Whitehead double of this circle and so we get the knot embedded in $C_3$ in the third figure.
In the third figure we can see the intersection of $C_3$ with a vertical sheet $S$,
which is four small disks. This vertical sheet $S$ intersects the Whitehead continuum in a Cantor set.}
\label{whitehead_decomp}
\end{figure}

%

%\begin{figure}[h!]
%\begin{center}
%\epsfig{file=whitehead_cont.eps, height=6.5cm}
%%\put(1.6, 2.5){$\subset \R^2$}
%%\put(-7.1, 3.6){$g(\CP^{2k}) - R$}
%%\put(-7.1, 0.5){$T \circ g(\CP^{2k}) - T(R)$}
%\end{center} 
%\caption{The Whitehead continuum in an ``unlinked and unknotted'' state. The two horizontal line segments are two copies of the $[0,1]$
%interval along which we construct the usual middle-third Cantor sets. The arrows show how the arcs of Cantor sets (and their appropriate ``Cantor subsets'', which
%are also just Cantor sets)
%multiplied by the  vertical line segment  are joined together by reflections.
%The leftmost and the rightmost vertical segments are connected by an arc.}
%\label{whitehead_cont}
%\end{figure}

\begin{defn}[Cellular set, cell-like set]
Let $X$ be an $n$-dimensional manifold and $C \subset X$ be a subset of $X$. 
The set $C$ is \emph{cellular} 
if there is a sequence $B_1, B_2, \ldots, B_n, \ldots$ of closed  $n$-dimensional 
balls in $X$ such that
$B_{n+1} \subset \interior B_n$
and $C = \cap_{n=1}^{\infty} B_n$.
A compact subset $C$ of a topological space $X$  is \emph{cell-like} if 
for every nbhd $U$ of $C$ there is a nbhd $V$ of $C$ in $U$ such that
the inclusion map $V \to U$ is homotopic in $U$ to a constant map.
Similarly, a decomposition is called cellular or cell-like if
each of its decomposition elements is cellular or cell-like, respectively.
\end{defn}

For example the ``topologist's sine curve'' in $\R^2$
is cellular.
A cellular set is compact and also connected but not necessarily path-connected.
It is also easy to see that
every compact contractible subset of a manifold is cell-like.
Also a compact and contractible metric space is cell-like in itself.
A  cell-like set $C$ is connected because if  there were
two  open subsets $U_1$ and $U_2$ in $X$ separating some connected components of $C$, then 
it would be not possible to contract any nbhd $V \subset U_1 \cup U_2$ of $C$ to one single point.

\begin{prop}\label{Wnotcell}
The set $\mathcal W$ is cell-like but not cellular.
\end{prop}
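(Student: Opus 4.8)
The plan is to prove the two assertions separately: cell-likeness follows directly from the Whitehead doubling, while non-cellularity requires a contradiction argument resting on the clasping geometry. For cell-likeness, the key observation is that each inclusion $C_{i+1} \hra C_i$ is null-homotopic: the solid torus $C_{i+1}$ deformation retracts onto its core circle, and this circle is the Whitehead double of the core of $C_i$, which has winding number $0$ in $C_i$ and is therefore contractible in $\pi_1(C_i) \cong \Z$. Given an arbitrary nbhd $U$ of $\mathcal W$, since the $C_i$ are nested compacta with $\cap_i C_i = \mathcal W \subset U$, a standard compactness argument produces an index $i$ with $C_i \subset U$. I would then take $V = \interior C_{i+1}$, an open nbhd of $\mathcal W$ contained in $U$; the inclusion $V \hra U$ factors through $C_{i+1} \hra C_i \subset U$ and so is null-homotopic inside $C_i \subset U$. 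This verifies the definition of cell-like.

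For the failure of cellularity I would argue by contradiction. Suppose $\mathcal W = \cap_n B_n$ with closed $3$-balls $B_{n+1} \subset \interior B_n$; after discarding finitely many terms we may assume $B_1 \subset \interior C_1$. Fix $n$ and choose $k$ so large that $C_k \subset \interior B_{n+1}$; this is possible because $\mathcal W = \cap_k C_k$ forces the nested compacta $C_k$ to converge to $\mathcal W$ in the Hausdorff sense. The meridian $m_k \subset \partial C_k$ then lies in $\interior B_{n+1} \setminus \mathcal W$. Because $B_{n+1}$ is a ball, $m_k$ bounds a singular disk $D$ in $B_{n+1}$; using that $\mathcal W$ has arbitrarily small ball neighborhoods $B_\ell \subset \interior B_{n+1}$ disjoint from $m_k$, an innermost-circle surgery on $D \cap \partial B_\ell$ (replacing subdisks lying in $\interior B_\ell$ by disks on the sphere $\partial B_\ell$) removes all intersections with $\mathcal W$. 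This yields a singular disk bounded by $m_k$ inside $B_{n+1} \setminus \mathcal W \subset \interior C_1 \setminus \mathcal W$; in other words, the ball structure forces $m_k$ to be null-homotopic in $\interior C_1 \setminus \mathcal W$.

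The contradiction, and the heart of the proof, is the opposite assertion: the meridian $m_k$ is \emph{not} null-homotopic in $\interior C_1 \setminus \mathcal W$. By compactness any such null-homotopy would lie in $C_1 \setminus \interior C_j$ for some $j > k$, so it suffices to show that $m_k$ stays essential in each exterior $C_1 \setminus \interior C_j$. This is exactly where the Whitehead clasp is used: although $m_k$ is null-homologous (the double has winding number $0$), each Whitehead link exterior $C_i \setminus \interior C_{i+1}$ has nonabelian fundamental group in which the meridian is a nontrivial, indeed non-peripheral, element, and I would track $m_k$ through the successive doubling inclusions to see it remains essential. Establishing this essentiality — propagating the nontriviality of a single Whitehead clasp through the infinite nesting — is the main obstacle; the homological triviality of the double means that $\pi_1$ rather than homology must do all the work, so the argument has to exploit the noncommutativity coming from the clasp rather than any linking number.
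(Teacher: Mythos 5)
Your proof of cell-likeness is correct and is essentially the paper's own argument: choose $C_i \subset U$ by compactness of the nested sequence, and use that the Whitehead double of the core circle has winding number $0$, hence is null-homotopic in the solid torus, so that the inclusion $C_{i+1} \hra C_i$ (and therefore $V \hra U$) is null-homotopic.

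The non-cellularity half contains a genuine gap, and you flag it yourself. Your reduction --- cellularity would force a deep meridian $m_k$ to bound a singular disk in $\interior C_1 - \mathcal W$, so it suffices to show $m_k$ is essential in every exterior $C_1 - \interior C_j$ --- is in substance the same reduction the paper makes (there packaged as: cellularity would make $S^3 - \mathcal W$ simply connected at infinity). But the essentiality of $m_k$ is the entire mathematical content of the proposition, and your proposal stops at calling it ``the main obstacle'' rather than proving it. Nontriviality of the meridian in a single Whitehead link exterior does not formally propagate through the nesting: the inclusion-induced maps $\pi_1(C_i - \interior C_{i+1}) \to \pi_1(C_1 - \interior C_j)$ could a priori kill the class, and ruling that out requires a genuine argument (incompressibility of the tori $\del C_i$ and the resulting amalgamated decomposition, or an explicit computation with presentations). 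This is precisely the point at which the paper invokes the Newman--Whitehead theorem \cite{NW37}: in its commutative diagram, the meridian of $C_n$ maps to a generator of $\pi_1(S^3 - \mathcal W - C)$, which is what makes the homomorphism $\varphi$ nonzero. Without this input, or an equivalent one, you have a correct strategy and a correct reduction, but not a proof. A secondary flaw is the surgery step: the balls $B_\ell$ furnished by the definition of cellularity are merely topological balls, possibly wildly embedded, so the singular disk $D$ cannot be made transverse to $\del B_\ell$ and ``$D \cap \del B_\ell$'' need not consist of circles; the innermost-circle replacement is therefore not available as stated. (To be fair, the paper's corresponding step, which asserts $\pi_1(\interior B_n - B_m) = 0$, also tacitly assumes tameness --- it fails if $B_m$ is, say, an Alexander horned ball; a fully rigorous version of this direction goes through the fact that crushing a cellular set yields a manifold, or through the cellularity criterion.)
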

\begin{proof}
Let $U$ be a nbhd of $\mathcal W$. Then there is an $n$ such that 
$C_i \subset U$ for all $i \geq n$.
Let $V$ be such a small tubular nbhd of $C_{n+1}$ which is inside $C_n$. Then since the Whitehead double of the center
circle of $C_n$ is null-homotopic in the solid torus $C_n$,  the thickened Whitehead double $C_{n+1}$ and its nbhd 
$V$ are  also null-homotopic in  $C_n$, hence the map $V \to U$ is homotopic in $U$ to a constant map.

\begin{lem}
The $3$-manifold $S^3 - \mathcal W$ is not simply connected at infinity.
\end{lem}
\begin{proof}
We have to show that there is a compact subset $C \subset S^3 - \mathcal W$ such that 
for every compact set $D \subset S^3 - \mathcal W$ containing 
$C$ the induced homomorphism 
$$
\varphi \co \pi_1(S^3 - \mathcal W - D) \to \pi_1(S^3 - \mathcal W - C)
$$
is not the zero homomorphism.
Let $C$ be the closure of 
$S^3 - C_1$.
If $D$ is a compact set in $S^3 - \mathcal W$ containing 
$C$, then $S^3 - D$ is a nbhd of $\mathcal W$
in $C_1$. Then there is an $n$ such that 
$C_i \subset S^3 - D$ for all $i \geq n$.
Consider the commutative diagram
\begin{equation*}
\begin{CD}
\pi_1(S^3 - C_n - D) @>>> \pi_1(S^3 - C_n - C)    \\
@VVV  @VV \alpha V \\
\pi_1(S^3 -  \mathcal W - D) @> \varphi >> \pi_1(S^3 -  \mathcal W - C). 
\end{CD}
\end{equation*}
By \cite{NW37} 
the generator of the group $\pi_1(S^3 - C_n - C)$ 
represented by
the meridian of the torus $C_n$ is mapped by $\alpha$
into
a generator of $\pi_1(S^3 -  \mathcal W - C)$.
Since this meridian  also represents an element of
$\pi_1(S^3 - C_n - D)$, we get that 
$\varphi$ is not the zero homomorphism.
\end{proof}
Let us continue the proof of Proposition~\ref{Wnotcell}. 
If $\mathcal W$ is cellular, then there are 
$B_1, B_2, \ldots, B_n, \ldots$ closed  $n$-dimensional 
balls in $S^3$ such that
$B_{n+1} \subset \interior B_n$
and $\mathcal W = \cap_{n=1}^{\infty} B_n$.
This would imply that 
$S^3 -  \mathcal W$ is simply connected at infinity because
if $C \subset S^3 -  \mathcal W$  is a compact set, then
take a $B_n \subset S^3  - C$ and
a loop in $\mathrm {int} B_n - \mathcal W$, then
there is a $B_m \subset \mathrm {int}  B_n$ not containing this loop 
and the loop in null-homotopic 
in $\mathrm {int}  B_n-B_m$ because
$\pi_1 (\mathrm {int}  B_n-B_m ) = 0$. 
Hence we obtain that $S^3 -  \mathcal W$ is not cellular.
\end{proof}
With more effort we could show that 
$S^3- \mathcal W$
is contractible
so it is homotopy equivalent to $\R^3$ but 
by the previous statement it is not
homeomorphic to $\R^3$.
It is known that
the set
$\mathcal W \x \{ 0 \}$ is cellular in $\R^3 \x \R$ and the decomposition space 
of the decomposition of 
 $\R^3 \x \R$ whose only
 non-degenerate element is 
 $\mathcal W \x \{ 0 \}$
 is homeomorphic to $\R^4$.
 This fact is the starting point  
 of the proof of the $4$-dimensional Poincar\'e conjecture.

Being cell-like often does not depend on the ambient space.
To understand this, we have to introduce a new  notion.

\begin{defn}[Absolute nbhd retract]
A metric space $Y$ is an \emph{absolute nbhd retract}  (or \emph{ANR} for short)
if for an arbitrary metric space $X$ and its closed subset $A$  every map $f$ from $A$ to $Y$ 
extends to a nbhd of $A$. In other words, the nbhd $U$ and the dashed arrow exist in the following diagram 
and make the diagram commutative.
\begin{center}
\begin{graph}(6,4.5)
\graphlinecolour{1}\grapharrowtype{2}
\textnode {A}(0.5,1.5){$A$}
\textnode {X}(5.5, 1.5){$X$}
\textnode {U}(3, 0){$U$}
\textnode {Y}(5.5, 4){$Y$}
\diredge {A}{Y}[\graphlinecolour{0}]
\diredge {A}{U}[\graphlinecolour{0}]
\diredge {U}{X}[\graphlinecolour{0}]
\diredge {A}{X}[\graphlinecolour{0}]
\diredge {U}{Y}[\graphlinecolour{0}\graphlinedash{4}]
%\freetext (3,1.8){$i$}
\freetext (3,3.2){$f$}
\freetext (3,1.2){$\subseteq$}
\freetext (1.2, 0.6){$\subseteq$}
\freetext (4.8, 0.6){$\subseteq$}
\end{graph}
\end{center}
\end{defn}

This is equivalent to say that 
for every metric space $Z$ and embedding $i \co Y \to Z$  such that 
$i(Y)$ is closed there is a nbhd $U$ of $i(Y)$ in $Z$ which retracts onto $i(Y)$, that is
$r|_{i(Y)} = \mathrm {id}_{i(Y)}$ for some map $r \co U \to i(Y)$.
It is a fact that every manifold is an ANR.

The property of cell-likeness is independent of the ambient space until that is an ANR as
the following statement shows.

\begin{prop}
If $C \subset X$ is a compact cell-like set
in a metric space $X$, then
the embedded image of $C$ in an arbitrary ANR is also cell-like.
\end{prop}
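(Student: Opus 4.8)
The plan is to transport the null-homotopy condition from $X$ to the target ANR along an extension of the embedding, after first replacing the definition of cell-likeness by the slightly weaker-looking condition that the inclusion of the compactum \emph{itself} into each of its neighbourhoods is null-homotopic. Write $j \co C \to Y$ for the embedding into the given ANR $Y$ and put $C' = j(C)$; as $C$ is compact, $C'$ is closed in the metric space $Y$. The preliminary remark is that if $C$ is cell-like in $X$, then for every neighbourhood $U$ of $C$ the inclusion $C \hookrightarrow U$ is already null-homotopic in $U$: choosing $V$ with $C \subset V \subset U$ and $V \hookrightarrow U$ null-homotopic, the inclusion of $C$ factors through $V$. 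This reformulation is what I will carry across, since it refers only to $C$ and not to an auxiliary neighbourhood.

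First I would use the defining property of the ANR to connect neighbourhoods in the two spaces. Since $C$ is a closed subset of the metric space $X$ and $Y$ is an ANR, the map $j$ extends to a map $J \co W \to Y$ on an open neighbourhood $W$ of $C$ in $X$, with $J|_C = j$. Fix now an arbitrary neighbourhood $U'$ of $C'$ in $Y$ and pull it back: $U = J^{-1}(U') \cap W$ is an open neighbourhood of $C$ in $X$. By the preliminary remark there is a homotopy $G \co C \x [0,1] \to U$ from the inclusion to a constant map. Then $J \circ G$ takes values in $J(U) \subset U'$, its initial stage is $J|_C = j$ and its final stage is constant; precomposing the source with the homeomorphism $j^{-1} \co C' \to C$ converts it into a null-homotopy in $U'$ of the inclusion $C' \hookrightarrow U'$. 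Thus the inclusion of $C'$ into every one of its neighbourhoods in $Y$ is null-homotopic.

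The remaining step is to upgrade this to the neighbourhood form demanded by the definition, and this is the only place where real work is needed and where the ANR hypothesis on $Y$ is used a second time. Regarding the above null-homotopy as a map $C' \x [0,1] \to Y$ with values in $U'$, it extends over a neighbourhood of the closed set $C' \x [0,1]$ in $Y \x [0,1]$, and after intersecting with the preimage of $U'$ and applying the tube lemma in the compact factor $[0,1]$, one obtains a homotopy $V' \x [0,1] \to U'$ for some neighbourhood $V'$ of $C'$. Shrinking $V'$, its final stage is so close to the constant value, and its initial stage so close to the honest inclusion of $V'$, that each is joined to these by a homotopy inside $U'$, using the standard fact that sufficiently close maps into an ANR are connected by a controllably small homotopy. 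Chaining the three homotopies shows $V' \hookrightarrow U'$ is null-homotopic in $U'$, which is exactly cell-likeness of $C'$ in $Y$.

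I expect this last thickening to be the main obstacle: the transported statement only controls the inclusion of $C'$ itself, and turning this into the null-homotopy of a genuine neighbourhood requires both the homotopy-extension and the ``nearby maps are homotopic'' properties of the ANR $Y$. It is worth emphasising that $X$ is never required to be an ANR — only the easy implication of cell-likeness is used on the $X$ side, while both uses of the extension property fall on the ANR $Y$.
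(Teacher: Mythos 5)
Your proof is correct and follows essentially the same route as the paper: extend the embedding over a neighbourhood of $C$ using the ANR property, pull back the given neighbourhood of the image, push the null-homotopy of $C$ (coming from cell-likeness in $X$) forward along the extension, and then thicken the resulting null-homotopy of the compactum to a null-homotopy of a whole neighbourhood using the ANR property a second time. The only difference is expository: the paper compresses your final paragraph (extension over a neighbourhood of $C' \x [0,1]$, tube lemma, endpoint adjustment) into a single sentence, whereas you spell out the details.
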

\begin{proof}
Suppose $e \co C \to Y$ is an embedding into an ANR $Y$.
We have to show that $e(C)$ is cell-like.
Let $U$ be a nbhd of $e(C)$. Since $Y$ is ANR, there is a nbhd $\tilde V$ 
of $C$ in $X$ such that $e$ extends to an $\tilde e \co \tilde V \to Y$.
Let $V \subset X$ be the open set $\tilde V \cap \tilde e^{-1}(U)$, it is a nbhd of $C$.
There is a nbhd $W$ of $C$ such that $C \subset W \subset V$ and
there is a homotopy 
of the inclusion $W \subset V$ to the constant in $V$ since $C$ is cell-like, denote this homotopy by
$\varphi \co W \x [0,1] \to V$.
Then $\varphi|_{C \x [0,1]}$ is a homotopy of the inclusion $C \subset V$ to the constant.
Take 
$$\tilde e \circ \varphi|_{C \x [0,1]} \circ (e^{-1}|_{e(C)} \x \mathrm {id}_{[0,1]}),$$ this
is a homotopy of the inclusion $e(C) \subset U$ to the constant in $U$.
The space $e(C) \x [0,1]$
is compact in $Y \x [0,1]$ and the homotopy maps it into $Y$, which is ANR.
This implies that
there is a nbhd $\tilde U \subset U$ of $e(C)$ such that the inclusion $\tilde U \subset U$
is homotopic to constant in $U$.
\end{proof}
For example, this shows that a compact and contractible metric space is cell-like
if we embed it into any ANR. 
In practice, we 
do not consider cell-like sets as subsets in some ambient space but rather
as compact metric spaces which are cell-like if we embed them into an arbitrary ANR.

It is clear that every cellular set $C$ is cell-like because in every nbhd $U$ of $C$ some open ball  is contractible. 
Also, we have seen that the Whitehead continuum is cell-like but not cellular.
In order to compare cell-like and cellular sets we introduce the notion of
cellularity criterion.

\begin{defn}[Cellularity criterion]
A subset $Y \subset X$ satisfies the \emph{cellularity criterion}
if for every nbhd $U$ of $Y$ there is a nbhd $V$ of $Y$ such that $V \subset U$
and every loop in $V - Y$ is null-homotopic in $U - Y$.
\end{defn}

The cellularity criterion and being cellular measure how wildly a subset is embedded into a space.
The next theorem compares cell-like and cellular sets in a PL manifold. We omit its difficult proof here.
\begin{thm}
Let $C$ be a cell-like subset of a PL $n$-dimensional manifold, where $n \geq 4$.
Then $C$ is cellular if and only if $C$ satisfies the cellularity criterion.
\end{thm}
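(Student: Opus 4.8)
\emph{Plan.} The statement is a biconditional, and I would treat the two implications separately; the forward implication is elementary while the converse is the substance of the theorem.

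\emph{Cellular $\Rightarrow$ cellularity criterion.} Here I would not even use cell-likeness. Write $C=\bigcap_n B_n$ with closed PL $n$-balls $B_{n+1}\subset\interior B_n$. Given a nbhd $U$ of $C$, pick $m$ with $B_m\subset U$ and set $V=\interior B_{m+1}$. If $\ell$ is a loop in $V-C$, then $\ell$ is compact and misses $C=\bigcap_k B_k$, so $\ell\cap B_k=\emptyset$ for some $k>m+1$; hence $\ell$ lies in the region $R=B_m-\interior B_k$ lying between the two balls. Writing $B_m=R\cup B_k$ with $R\cap B_k$ a bicollared $\partial B_k\cong S^{n-1}$, van Kampen gives $0=\pi_1(B_m)=\pi_1(R)\ast_{\pi_1(S^{n-1})}\pi_1(B_k)$, and since $\pi_1(S^{n-1})=\pi_1(B_k)=0$ for $n\ge 3$ we get $\pi_1(R)=0$. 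Thus $\ell$ bounds a disk in $R\subset B_m-C\subset U-C$, so $V$ verifies the cellularity criterion.

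\emph{Cellularity criterion (and cell-likeness) $\Rightarrow$ cellular.} I would first reduce to a ball-production statement: a compact set $C$ is cellular exactly when every nbhd $U$ of $C$ contains a PL $n$-ball $B$ with $C\subset\interior B\subset B\subset U$. Indeed, choosing a decreasing nbhd basis $U_1\supset U_2\supset\cdots$ of $C$ (possible since $C$ is compact in a metric space) and producing $B_k\subset U_k$ with $B_{k+1}\subset\interior B_k$, one gets $C\subset\bigcap B_k\subset\bigcap U_k=C$, i.e.\ a defining sequence. So I fix $U$ and aim to engulf $C$ in a PL ball inside $U$. Triangulating $M$ finely, I start from a connected compact PL $n$-manifold neighborhood $N\subset U$ of $C$ (a regular neighborhood of the subcomplex of small simplices meeting $C$). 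The two hypotheses feed two engulfing steps. Cell-likeness supplies null-homotopic inclusions of small neighborhoods of $C$ into larger ones; \emph{interior} engulfing uses these to absorb the spine of $N$ and arrange that $N$ is contractible, so $\partial N$ is a homology $(n-1)$-sphere. The cellularity criterion says loops in a deleted neighborhood $V-C$ bound in $U-C$; \emph{exterior} engulfing uses these bounding disks to kill $\pi_1(\partial N)$, so that $\partial N$ becomes simply connected. At that point $N$ is a contractible PL $n$-manifold whose boundary is a simply connected homology sphere, hence a homotopy $(n-1)$-sphere; the generalized Poincar\'e theorem gives $\partial N\cong S^{n-1}$ and the $h$-cobordism theorem gives $N\cong B^n$, the desired ball.

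\emph{Where the difficulty lies.} The whole engine is the realization of homotopies by \emph{embedded} disks so that engulfing, Poincar\'e and the $h$-cobordism theorem apply; this is exactly what forces a dimension restriction. For $n\ge 5$ one has Stallings--Zeeman engulfing and the PL $h$-cobordism/Poincar\'e theorems, and the argument above goes through more or less as stated (the borderline $n=5$ requiring extra care in the engulfing). The genuinely hard case is $n=4$, where ordinary PL engulfing is unavailable: the disks that trivialize $\pi_1(\partial N)$ coming from the cellularity criterion, and the disks contracting $N$ coming from cell-likeness, must instead be produced as embedded topological disks via Freedman's disk embedding theorem---the deep $4$-dimensional input flagged in the Introduction---after which $N$ is recognized as a topological ball. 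Thus I expect the main obstacle to be precisely this passage from null-homotopies to embedded disks in low dimensions, which is why the $n=4$ instance sits on top of the machinery used to prove the $4$-dimensional Poincar\'e conjecture.
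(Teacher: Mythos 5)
First, note that the paper does not actually prove this theorem: it states it with the remark ``We omit its difficult proof here,'' so there is no argument in the text to compare yours against; what follows assesses your proposal on its own terms. Your forward implication contains a genuine gap. The definition of cellularity (as given in the paper) supplies a nested sequence of \emph{topological} closed balls, not PL ones, so your opening move ``write $C=\bigcap_n B_n$ with closed PL $n$-balls'' is not available. This matters: the balls $B_k$ may be wildly embedded, and then the region $R=B_m-\interior B_k$ need not be simply connected, nor is van Kampen applicable across the non-bicollared sphere $\partial B_k$. Concretely, a single point $p$ is cellular and admits a defining sequence of Alexander-horned-type balls shrinking to $p$; for such a sequence the region between $\partial B_m$ and the wild sphere $\partial B_k$ has nontrivial fundamental group (it carries the fundamental group of the bad complementary domain of the wild sphere), so the claim $\pi_1(R)=0$ is false at every finite stage, even though the loop does eventually die in $B_m-C$. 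The implication itself is true, but it needs a repair: either first tame the defining sequence --- since $\interior B_k$ is an open $n$-cell by invariance of domain, you can interpolate round, hence bicollared, balls $B_k'$ with $B_{k+1}\subset\interior B_k'\subset B_k'\subset\interior B_k$, and then your van Kampen computation becomes legitimate after thickening along the bicollars (bicollared is what you need and what you can get; PL is neither) --- or, alternatively, use that a single cellular set is shrinkable, so that $\interior B_m - C\cong \R^n-\{pt\}$, which is simply connected for $n\geq 3$.

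Your converse direction is a road map rather than a proof, though it is the correct road map: it is McMillan's engulfing argument for $n\geq 5$ (engulfing fed by cell-likeness on the inside and by the cellularity criterion on the outside, then the Poincar\'e and $h$-cobordism theorems), with Freedman--Quinn technology replacing engulfing when $n=4$. But the two steps you describe in one sentence each --- ``arrange that $N$ is contractible'' and ``kill $\pi_1(\partial N)$'' --- are exactly where the entire difficulty of the theorem is concentrated (which is presumably why the paper declines to include the proof), and as written they are goals, not arguments; nothing in your text explains how engulfing produces the required isotopies or why the improved neighborhoods remain inside $U$. One further correction: at $n=5$ the extra care is not only ``in the engulfing.'' There $\partial N$ is a PL homotopy $4$-sphere, and the PL (equivalently smooth) $4$-dimensional Poincar\'e conjecture is open, so you cannot invoke a PL Poincar\'e theorem; you must pass to Freedman's topological recognition $\partial N\cong S^4$ and the topological $h$-cobordism theorem, which still suffices because cellularity only asks for topological balls.
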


In dimension $2$ we have a simpler statement:
\begin{thm}
Every cell-like subset in a $2$-dimensional manifold $X$ is cellular.
\end{thm}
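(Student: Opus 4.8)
The plan is to reduce the statement to the classical planar situation and then run an Alexander-duality argument. Write $C$ for the cell-like set and let $X$ be the ambient surface; I may assume $C$ is not a single point (that case is trivial). The first and most delicate step is to show that $C$ may be assumed to lie in the sphere $S^2$, i.e.\ that the inclusion $C \hookrightarrow X$ lifts to a homeomorphic copy in the universal cover $\widetilde X$. The key point is that the defining property of cell-likeness --- arbitrarily small neighbourhoods whose inclusion into the next is null-homotopic --- forces the inverse system formed by the (co)homology and the fundamental groups of neighbourhoods of $C$ to be pro-trivial; in particular $\check H^1(C;\Z)=0$ (with any coefficients) and the shape-theoretic fundamental pro-group of $C$ is trivial. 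Since every surface has universal cover homeomorphic to $S^2$ or $\R^2$, and a surface other than $S^2$ and $\RP^2$ is aspherical, the lifting obstruction is pulled back from $X$ and dies on $C$ by triviality of shape. So I can lift $C\hookrightarrow X$ to $\widetilde X$; as $\widetilde X$ is an open subset of $S^2$ and the covering projection $p$ is a local homeomorphism, I obtain a cell-like copy $\widehat C\subset S^2$ with $\widehat C\cong C$, and it suffices to prove that $\widehat C$ is cellular in $S^2$ and then push small disks back down through $p$.

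Assume now $C\subset S^2$ with $C\neq S^2$ (the sphere is not cell-like). Put $U=S^2\setminus C$. Because $C$ is cell-like it is connected and has trivial reduced \v{C}ech cohomology, so Alexander duality in $S^2$ gives $\widetilde H_0(U)\cong\check H^1(C)=0$ and $\widetilde H_1(U)\cong\check{\widetilde H}^0(C)=0$. Hence $U$ is a connected open subset of $S^2$ with $H_1(U)=0$. An open subset of a surface has free fundamental group, so $H_1(U)=0$ forces $\pi_1(U)=0$: thus $U$ is a connected, simply connected, proper open subset of $S^2$, and being noncompact it is homeomorphic to $\R^2$ by the classification of simply connected surfaces (equivalently the topological Riemann mapping theorem).

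Fix a homeomorphism $\phi\co\R^2\to U$ and set $E_n=\phi(\{|x|\le n\})$, so the $E_n$ are closed disks with $E_n\subset\interior E_{n+1}$ and $\bigcup_n E_n=U$. Each $\partial E_n$ is a bicollared simple closed curve in $S^2$, so by the Schoenflies theorem $B_n:=S^2\setminus\interior E_n$ is a closed $2$-disk. Then $B_{n+1}\subset\interior B_n$ (this is exactly the relation $E_n\subset\interior E_{n+1}$), and $\bigcap_n B_n=S^2\setminus\bigcup_n\interior E_n=S^2\setminus U=C$, so $C$ is cellular in $S^2$. To finish, note that $\bigcap_n B_n=\widehat C$ is compact and lies in the open set $\widetilde X\subseteq S^2$ on which $p$ restricts to a homeomorphism near $\widehat C$; choosing a neighbourhood $W\supseteq\widehat C$ with $p|_W$ injective, compactness gives $B_n\subset W$ for large $n$. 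Discarding the early terms and applying $p$ yields nested closed disks in $X$ intersecting in $C$, so $C$ is cellular in $X$.

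The routine ingredients are Alexander duality, the freeness of $\pi_1$ of open surfaces, the classification of simply connected surfaces, and the Schoenflies theorem. The hard part will be the first step: converting the \emph{intrinsic} cell-likeness hypothesis into the \emph{extrinsic} conclusion that $C$ embeds in $S^2$ (equivalently, that $C$ lies in a disk in $X$). This is where one must genuinely use that cell-like sets have trivial shape and that surfaces are aspherical with spherical or planar universal cover. If one prefers to avoid shape theory, the same reduction can be obtained more concretely from the vanishing of $\check H^1(C)$: every map $X\to S^1$ representing a class of $H^1(X)$ restricts to a null-homotopic map on $C$, which is exactly the condition needed to lift $C$ into $\widetilde X$.
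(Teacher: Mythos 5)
Your argument is correct in substance, but it takes a genuinely different route from the paper's. The paper, after the same kind of reduction to a planar situation, constructs disk neighborhoods of $C$ by hand: it produces a compact smooth $2$-manifold neighborhood $H$ of $C$ via a Morse function, proves $H - C$ is connected by comparing the homology exact sequences of $(H, H-C)$ and $(\R^2, \R^2 - C)$ --- the connectivity of $\R^2 - C$ being established by the Borsuk-type criterion on extending and homotoping maps into $S^1$ (Hurewicz--Wallman) --- and then cuts $H$ along arcs missing $C$ until $\del H$ is a single circle, so that the Jordan curve theorem makes $H$ a disk; iterating over a nested sequence of neighborhoods gives the nested disks. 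You instead determine the complement globally: Alexander duality (your vanishing $\check H^1(C)=0$ is exactly the same input the paper exploits through maps to $S^1$), freeness of the fundamental group of a noncompact surface, and the classification of simply connected noncompact surfaces identify $S^2 \setminus C$ with $\R^2$, after which cellularity falls out of an exhaustion together with Jordan--Schoenflies. Your route is shorter and avoids the Morse-theoretic construction and the surgery step, at the price of importing heavier theorems (\v{C}ech--Alexander duality, freeness of $\pi_1$ of open surfaces, the classification); the paper's proof is longer but closer to first principles.

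Two points in your first step need repair. First, the lifting must be performed on a neighborhood rather than on $C$ itself: a cell-like set need not be path-connected, let alone locally path-connected (the Whitehead continuum is the paper's standard example), so the covering-space lifting criterion cannot be applied to $C \hra X$ directly. The fix is what your pro-group language is gesturing at: choose connected open neighborhoods $V \subset U$ of $C$ with $V \hra U$ null-homotopic; then the image of $\pi_1(V) \to \pi_1(X)$ is trivial, the inclusion of $V$ (connected, locally path-connected) lifts to $\widetilde X$, and the lift, being an injective local homeomorphism, is an open embedding carrying $C$ onto $\widehat C$. This lifted copy of $V$ is also precisely the open set $W$ with $p|_W$ injective that you need at the end, and asphericity of $X$ plays no role anywhere. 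Second, your closing remark --- that null-homotopy on $C$ of maps to $S^1$, i.e.\ vanishing of $\check H^1(C)$, ``is exactly the condition needed to lift'' --- is false: that condition only controls the image of first homology and hence only lifts into the universal \emph{abelian} cover. On a surface of genus at least $2$, a commutator of standard generators dies in $H_1$ but not in $\pi_1$, so the $S^1$-criterion cannot replace the $\pi_1$-criterion; the genuinely elementary substitute for shape theory is the neighborhood argument just described, not the cohomological one.
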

\begin{proof}
At first suppose $X = \R^2$ and $C \subset \R^2$ is a cell-like set.
Let $U$ be a bounded nbhd of $C$ and let $V \subset U$ a nbhd of $C$ such that 
the inclusion $V \to U$ is homotopic to constant.
Choose another nbhd $W \subset V$ of $C$ as well such that $\mathrm{cl} \thinspace W \subset V$.
 Take a compact smooth $2$-dimensional manifold
$H \subset V$ such that $C \subset \mathrm{int} H$, $\del H \subset V - \mathrm{cl} \thinspace W$ and $\mathrm{int} H$ is connected.
Such an $H$
can be obtained by 
taking a Morse function $f \co V \to [0,1]$ which
maps the nbhd $W$ of $C$ into $0$ and a small nbhd of $\R^2 - V$ into $1$.
Then the preimage of a regular value $r$ close to $1/2$ is a smooth $1$-dimensional submanifold of $\R^2$
and the preimage of $(-\infty, r]$ is a compact subset containing
$W$ and $C$, denote this $f^{-1}((-\infty, r])$ by $H$.  
%This $H$ is the disjoint union 
%$$f^{-1}((-\infty, r)) \sqcup f^{-1}(r),$$
%which implies that $f^{-1}((-\infty, r])$ is
Then $H$ is
a compact smooth $2$-dimensional submanifold of $\R^2$, see Figure~\ref{twomanifoldconstruct}. 
Take its connected component (this is also a path-connected component because
$H$ is a manifold)
which contains $C$ and denote this by $H$ as well.
\begin{figure}[h!]
\begin{center}
\epsfig{file=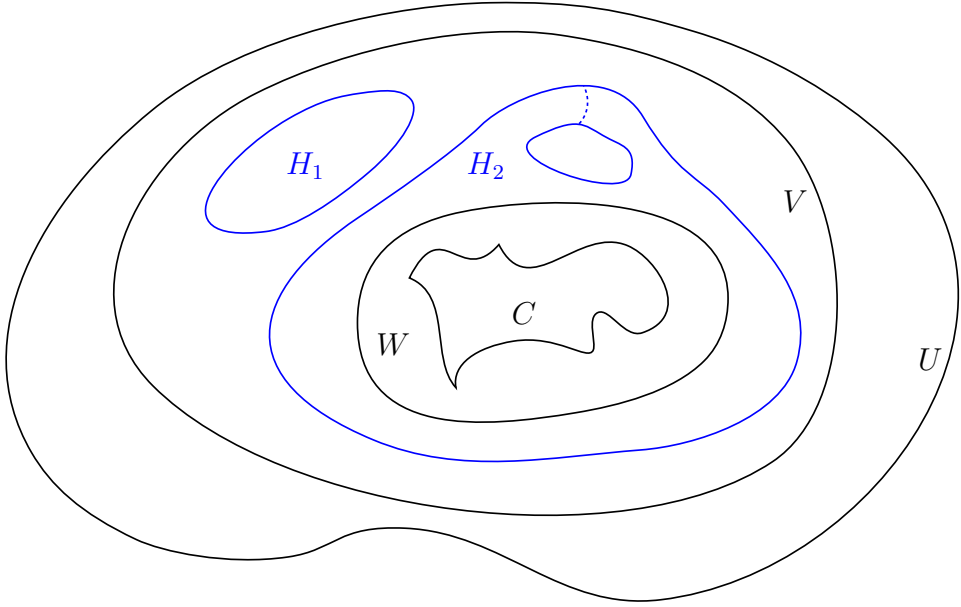, height=8cm}
\put(-6, 3.7){$C$}
\put(-0.6, 3.1){$U$}
\put(-2.4, 5.2){$V$}
\put(-7.8, 3.3){$W$}
\put(-6.6, 5.7){${\textcolor[rgb]{0,0,1}{H_2}}$}
\put(-9, 5.7){${\textcolor[rgb]{0,0,1}{H_1}}$}
\end{center}
\caption{The compact manifold $H = H_1 \cup H_2$. Its component $H_2$ contains $C$.
Since $H_2 - C$ is path-connected, there
is a path (dashed in the figure) in $H_2$ connecting two different components of the boundary of $H_2$. 
}
\label{twomanifoldconstruct}
\end{figure}

We show that $H - C$ is connected.
For this consider the commutative diagram
\begin{equation*}
\begin{CD}
H_1 ( H; \Z_2 )  @>>> H_1 ( H, H - C; \Z_2 ) @>>> H_0 (  H - C; \Z_2 )  @>>> H_0 ( H ) @>>> 0 \\
@VVV @VVV @VVV @VVV \\
H_1 ( \R^2 ; \Z_2 )  @>>> H_1 ( \R^2, \R^2 - C; \Z_2 ) @>>> H_0 (  \R^2 - C; \Z_2 )  @> i_*>> H_0 ( \R^2 ) @>>> 0 
\end{CD}
\end{equation*}
coming from the long exact sequences and the inclusion $( H, H - C ) \subset ( \R^2, \R^2 - C )$.
This is just the diagram
\begin{equation*}
\begin{CD}
H_1 ( H; \Z_2 )  @>>> H_1 ( H, H - C; \Z_2 ) @>>> H_0 (  H - C; \Z_2 )  @>>>  \Z_2  @>>> 0 \\
@VVV @VV \cong V @VVV @VV \cong V \\
0  @>>> H_1 ( \R^2, \R^2 - C; \Z_2 ) @>>> H_0 (  \R^2 - C; \Z_2 )  @> i_* >> \Z_2  @>>> 0  
\end{CD}
\end{equation*}
If the group $H_0 (  \R^2 - C; \Z_2 )$ is $\Z_2$, i.e.\ the manifold $\R^2 - C$ is connected, then 
exactness implies that $H_0 (  H - C; \Z_2 ) \cong \Z_2$
so $H - C$ is connected.
To show that $\R^2 - C$ is connected, we apply
\cite[Theorem~VI.5, page~86]{HW41}, which implies that
if $C$ is a closed subset of a space $D$ and $f, g$ are homotopic maps of $C$ into $S^1$ such that 
$f$  extends to $D$, then $g$ extends to $D$ and the extensions are homotopic.
Suppose the open set $\R^2 - C$ is not connected, then 
it is the disjoint union of two open sets $A$ and $B$. At least one of these is bounded
because for large enough $s$ the set $\R^2 - [-s, s]^2$ is disjoint from $C$ and it is connected 
hence it is in $A$ or $B$ but then $[-s, s]^2$ contains $B$ or $A$, respectively.
Suppose $A$ is bounded, $p \in A$ and $q \in B$.
For a subset $S \subset \R^2$ and point $x \in \R^2$
denote by $\pi_{S,x} \co S - \{x\} \to S^1$ the radial projection of $S - \{x\}$ to the circle $S^1$ of radius $1$  centered at $x$.
Then $\pi_{C,q}$ extends to $\R^2 - \{q\}$ so also to $A \cup C$ but
$\pi_{C,p}$ does not extend to $A \cup C$
because
such an extension
would extend to a much
  larger disk $P$ centered at $p$ as well by radial projection and then a retraction 
 of $P$ onto its boundary (if we identify it with the target circle of $\pi_{C,p}$) would exists.
 Consequently  $\pi_{C,q}$ and $\pi_{C,p}$ are not homotopic and so 
 at least one of them is not homotopic to constant.
This means if $\R^2 - C$ is not connected, then 
there is a map $C \to S^1$ which is not homotopic to constant.
But since the inclusion $V \subset U$ and then also $C \subset U$ are homotopic to constant,
we get that $\R^2 - C$ is connected. 

Finally, we get that $H - C$ is a  path-connected smooth $2$-dimensional manifold
with boundary. Hence if the number of components of $\del H$ is larger than one, then 
there exists a smooth curve transversal to $\del H$, disjoint from $C$
and connecting different components of $\del H$. We can cut $H$ along this curve and by 
repeating this process we end up with $\del H$ being a single circle.
By the Jordan curve theorem 
$H$ is a compact $2$-dimensional disk. 
In this way we get $$C \subset W \subset \mathrm { int} H \subset H \subset V \subset U.$$
Since
in $\R^2$ every compact set is a countable intersection of open sets which form a decreasing sequence,
we have 
$C = \cap_{n = 1}^{\infty} U_n$, where $U_1 \supset U_2 \supset \cdots \supset U_n \supset \cdots$,
where the sets $U_n$ are open. 
We can also assume
that for each $n$ we have $\cl U_{n+1} \subset U_n$.

We obtain countably many compact $2$-dimensional disks 
$H_1, H_2, \ldots$ by the previous construction, which satisfy
$$C \subset U_{n+1} \subset \mathrm { int} H_n \subset H_n \subset V \subset U_n.$$
Hence $C = \cap_{n = 1}^{\infty} H_n$ so $C$ is cellular.

In the case of $X$ is an arbitrary $2$-dimensional manifold, %we take 
%the universal cover of the component of $X$ containing $C$. It is $\R^2$ or $S^2$. 
 since $C$ is cell-like, there exists a nbhd of $C$ which is homotopic to constant 
so $C$ is contained in a simply-connected $2$-dimensional manifold nbhd, which is homeomorphic to $\R^2$.
Hence a similar argument gives that $C$ is cellular.
\end{proof}

\begin{prop}
If $C$ is cell-like in a smooth $n$-dimensional manifold $X$, where $n \geq 3$, then 
$C \x \{ 0 \}$ is cellular in $X \x \R^3$.
\end{prop}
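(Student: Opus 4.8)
\emph{The plan is} to verify that $C \times \{0\}$ is both cell-like and satisfies the cellularity criterion in $X \times \R^3$, and then to invoke the theorem equating these two conditions with cellularity in a PL manifold of dimension $\geq 4$. Note that $\dim (X \times \R^3) = n + 3 \geq 6$ because $n \geq 3$, and that $X \times \R^3$, being smooth, carries a PL (Whitehead) triangulation; since cell-likeness, cellularity and the cellularity criterion are all topological properties, the cited theorem applies. Cell-likeness of $C \times \{0\}$ is immediate: cell-likeness of a compact set is independent of the ambient ANR by the earlier proposition, the space $C \times \{0\}$ is homeomorphic to the cell-like space $C$, and $X \times \R^3$ is a manifold, hence an ANR.

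The heart of the argument is the cellularity criterion, and this is exactly where the three extra dimensions are used. Given a nbhd $U$ of $C \times \{0\}$, I first extract a product nbhd: by compactness of $C$ there are an open set $N$ with $C \subset N \subset X$ and a ball $B = B^3_\delta(0) \subset \R^3$ with $N \times B \subset U$. Applying cell-likeness of $C$ in $X$ with ambient nbhd $N$ yields a nbhd $N' \subset N$ of $C$ together with a null-homotopy $\Phi \co N' \times [0,1] \to N$ of the inclusion $N' \hra N$ to a constant. Put $V = N' \times \tfrac12 B$, a nbhd of $C \times \{0\}$ contained in $U$.

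Now let $\gamma = (a, b) \co S^1 \to V - (C \times \{0\})$ be a loop, so that $b(t) \neq 0$ whenever $a(t) \in C$. I push the $\R^3$-coordinate to a constant: for $v \in \tfrac12 B$ the straight-line homotopy $H(t,s) = (a(t),\ (1-s)b(t) + s v)$ stays in $V$ by convexity, and it meets $C \times \{0\}$ only when $a(t) \in C$ and $v$ is either $0$ or a negative scalar multiple of $b(t)$. These offending values of $v$ form the union of $\{0\}$ with the image of a map from a subset of $S^1 \times (0,\infty)$, hence a set of measure zero in $\R^3$; choosing a generic small $v \neq 0$ makes $H$ avoid $C \times \{0\}$. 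Thus $\gamma$ is homotopic in $U - (C \times \{0\})$ to the loop $\gamma_1(t) = (a(t), v)$ lying in $N' \times \{v\}$. Since $v \neq 0$, the slice $N \times \{v\} \subset N \times B \subset U$ is disjoint from $C \times \{0\}$, so $(t,s) \mapsto (\Phi(a(t),s),\ v)$ contracts $\gamma_1$ to a point while avoiding $C \times \{0\}$. Concatenating the two homotopies shows $\gamma$ is null-homotopic in $U - (C \times \{0\})$, which is precisely the cellularity criterion.

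\emph{The main obstacle} is this push-off step: one must carry the loop off the zero-slice $X \times \{0\}$ without ever touching $C \times \{0\}$, and the general-position count that a $2$-dimensional homotopy track can miss a point of $\R^3$ (codimension $3 > 2$) is exactly what the factor $\R^3$ buys us. Everything else is routine once the product nbhd and the null-homotopy $\Phi$ are in hand. Finally, cell-like together with the cellularity criterion gives that $C \times \{0\}$ is cellular in $X \times \R^3$ by the cited theorem.
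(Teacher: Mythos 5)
Your overall strategy coincides with the paper's (verify cell-likeness, verify the cellularity criterion, invoke the PL theorem in dimension $n+3 \geq 4$), and most of the steps are sound: the product nbhd $N \times B$ via the tube lemma, the choice $V = N' \times \tfrac12 B$, and especially the final contraction $(\Phi(a(t),s),\, v)$, which avoids $C \times \{0\}$ simply because its second coordinate is the constant $v \neq 0$. But there is a genuine gap in the push-off step. You assert that the offending values of $v$, namely the cone $\{ -\lambda b(t) : \lambda \geq 0,\ a(t) \in C \}$, have measure zero in $\R^3$ \emph{because} they form the image of a map from a subset of $S^1 \times (0,\infty)$. That inference is invalid for merely continuous loops: a continuous image of a $2$-dimensional set can have positive measure, and can even be all of $\R^3$. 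Concretely, take $a(t) \equiv a_0 \in C$ and let $b \co S^1 \to \tfrac12 B - \{0\}$ be a loop of constant norm whose radial projection to $S^2$ is surjective (a rescaled space-filling loop). Then \emph{every} $v \in \R^3$ equals $-\lambda b(t)$ for some $\lambda \geq 0$ and some $t$, so no choice of $v$, generic or not, makes your straight-line homotopy miss $C \times \{0\}$ --- even though this particular $\gamma$ is obviously null-homotopic in $\{a_0\} \times (\tfrac12 B - \{0\})$.

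The missing idea is the one the paper supplies first: homotope $\gamma$ inside the open set $V - C \times \{0\}$ to a \emph{smooth} loop (a sufficiently $C^0$-close smooth approximation is homotopic to $\gamma$ within that open set, since the image of $\gamma$ is compact and $C \times \{0\}$ is closed). Once $b$ is smooth, $(t,\lambda) \mapsto -\lambda b(t)$ is a smooth map of a $2$-manifold into $\R^3$, and its image genuinely has measure zero (Sard, or the fact that Lipschitz maps cannot raise Hausdorff dimension); your generic $v$ then exists and the rest of your argument goes through verbatim. With this repair your proof is correct, and it is in fact a more hands-on variant of the paper's: the paper smooths not only the loop but an entire null-homotopy disk $\tilde H \co D^2 \to U$ and pushes it off an $n$-dimensional slab $W \subset X \times \{0\}$ by transversality (using $2 + n < n+3$), whereas you push only the loop off the zero-slice (the same codimension-three count) and then contract inside the parallel slice $N \times \{v\}$ using cell-likeness of $C$ in $X$. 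Both arguments spend the three extra dimensions in the same way; yours avoids the transversality theorem for maps of disks, at the price of the smoothing step you omitted.
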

\begin{proof}
It is enough to show that $C \x \{ 0 \}$ satisfies the cellularity criterion.
It is easy to see that 
$C  \x \{ 0 \}$ is cell-like in $X \x \R^3$.
Let $U$ be a nbhd of $C  \x \{ 0 \}$ in $X \x \R^3$.
It is obvious that 
 there is a nbhd $V \subset U$ of $C \x \{ 0 \}$ 
such that  every 
loop $\ga \co [0,1] \to V$
is null-homotopic in $U$.
Let $\ga$ be an arbitrary loop in $V - C \x \{ 0\}$, it is homotopic to a smooth loop in $\tilde \ga \co V - C \x \{ 0\}$ by a homotopy $H$.
A homotopy of $\tilde \ga$ to constant  can be approximated by
a smooth map $\tilde H \co D^2 \to U$, where $\tilde H|_{\del D^2} = \tilde \ga$. 
 In the subspace $X \x \{ 0 \}$ of $X \x \R^3$ 
 let $W$ be a nbhd of $C \x \{ 0 \}$
 which is disjoint from 
the  homotopy $\tilde H$.
Perturb $\tilde H$ keeping $\tilde H|_{\del D^2}$ fixed  to get a transversal map to 
the $n$-dimensional manifold $W$ in $U$, hence we get that 
$\ga$ is null-homotopic in $U - C \x \{ 0 \}$.
So the cellularity criterion holds for $C \x \{ 0 \}$.
\end{proof}

\subsection{Antoine's necklace}

Take the defining sequence where 
\begin{itemize}
\item
$C_1$ is a solid torus, 
\item
$C_2$ is a finite number of solid tori embedded in $C_1$ in such a way that
each torus is unknotted and linked to its neighbour as in a usual chain, 
\item
$C_3$ is again a finite number of similarly linked solid tori, 
\end{itemize}
\ldots, etc., see Figure~\ref{antoine}.

\begin{figure}[h!]
\begin{center}
\epsfig{file=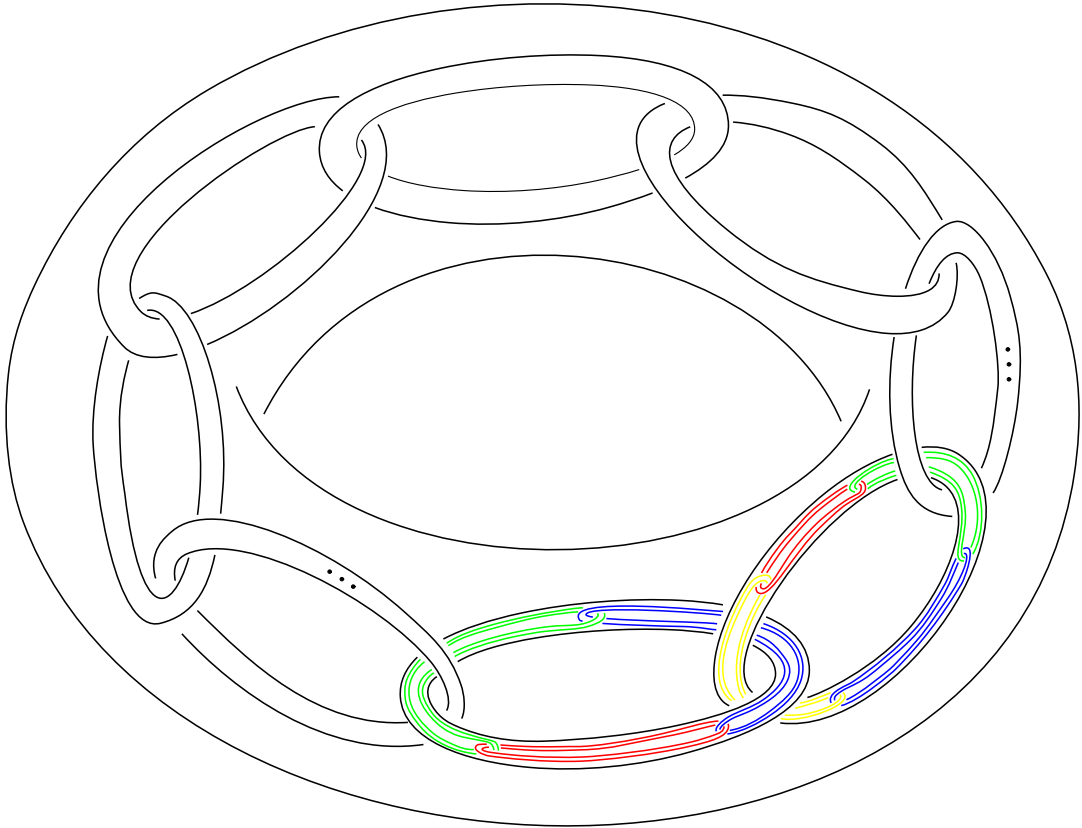, height=11cm}
%\put(1.6, 2.5){$\subset \R^2$}
%\put(-7.1, 3.6){$g(\CP^{2k}) - R$}
%\put(-7.1, 0.5){$T \circ g(\CP^{2k}) - T(R)$}
\end{center} 
\caption{A sketch of the defining sequence of Antoine's necklace. We can see the solid torus $C_1$, the linked tori $C_2$ and some linked tori from the collection $C_3$, etc.
The number of components of $C_{n+1}$ in $C_n$ 
is large enough to make the diameters of the tori converge to $0$.}
\label{antoine}
\end{figure}

We always consider at least three tori in each $C_n$.
We require that the maximal diameter of tori in $C_n$ converges to $0$.
The set $\cap_{n = 1}^{\infty} C_n$ is called Antoine's necklace and denoted by $\mathcal A$.
It is easy to see that each of its components is cell-like. 
%If in each solid torus of $C_n$ the number of tori in $C_{n+1}$ is two, then the decomposition space is called Bing decomposition.
Unlike Whitehead continuum the components of $\mathcal A$
are cellular because every component of $C_{n+1}$ is inside a ball in $C_n$.

Recall that the Cantor set is the topological space 
$$D_1 \x D_2 \x \cdots \x D_n \x \cdots,$$
where every space $D_n$ is a finite discrete metric space with $|D_n| \geq 2$.
%$\{ 0, 1 \}^{\N}$, where $\{ 0, 1 \}$ is the discrete metric space.

\begin{prop}
The space $\cap_{n = 1}^{\infty} C_n$
is homeomorphic to the Cantor set.
\end{prop}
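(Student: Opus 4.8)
The plan is to use the description of the Cantor set recalled just above — a countable product of finite discrete spaces — and to exhibit $\mathcal A=\cap_{n=1}^{\infty}C_n$ as the space of nested sequences of tori of the defining sequence. For each $n$ let $\mathcal C_n$ denote the finite set of connected components of $C_n$, each a solid torus, endowed with the discrete topology. Since $C_{n+1}\subset\interior C_n$ and each component of $C_{n+1}$ is connected, every component of $C_{n+1}$ lies in a unique component of $C_n$, and each component of $C_n$ contains at least three components of $C_{n+1}$. Assuming, as in the standard construction and in Figure~\ref{antoine}, that this branching number is constant along each level, say $r_n\ge 3$, one obtains identifications $\mathcal C_{n+1}\cong\mathcal C_n\x D_n$ with $|D_n|=r_n$, and hence an identification of the nested sequences with $\prod_{n\ge 1}D_n$. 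First I would define
$$\phi\co \mathcal A\to\{(T_1,T_2,\dots):T_n\in\mathcal C_n,\ T_1\supset T_2\supset\cdots\},\qquad \phi(x)=(T_1(x),T_2(x),\dots),$$
where $T_n(x)$ is the unique component of $C_n$ containing $x$.

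Next I would check that $\phi$ is a bijection. It is injective because the maximal diameter of the tori in $C_n$ tends to $0$: if $x\neq y$, then for large $n$ the component of $C_n$ containing $x$ has diameter smaller than $d(x,y)$, so $x$ and $y$ lie in different components and $\phi(x)\neq\phi(y)$. It is surjective because a nested sequence $T_1\supset T_2\supset\cdots$ of nonempty compact tori has nonempty intersection, and any point there maps to the given sequence; moreover $\mathrm{diam}\,T_n\to 0$ forces this intersection to be a single point.

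To promote $\phi$ to a homeomorphism I would use that each component $T\in\mathcal C_n$ is clopen in $C_n$, so $\mathcal A\cap T$ is clopen in $\mathcal A$. Consequently each coordinate map $x\mapsto T_n(x)$ is continuous (the preimage of a point of the discrete set $\mathcal C_n$ is clopen), hence $\phi$ is continuous into the product. As $\mathcal A$ is a closed bounded subset of $\R^3$ it is compact, the product $\prod_n D_n$ is Hausdorff, and a continuous bijection from a compact space to a Hausdorff space is a homeomorphism. This identifies $\mathcal A$ with $\prod_{n\ge 1}D_n$, each $|D_n|\ge 2$, i.e.\ with the Cantor set.

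The one point needing care — and the only real obstacle — is matching the target \emph{literally} to the product form $D_1\x D_2\x\cdots$, which requires the branching number $r_n$ to be constant along each level. If the construction is not uniform, the set of nested sequences is instead the boundary of a finitely branching tree with every branching at least $2$, and the clean way to finish is to verify directly that $\mathcal A$ is compact, metrizable, totally disconnected (the clopen sets $\mathcal A\cap T$ separate any two points, exactly as in the injectivity step) and has no isolated points — given $x$, at each level choose a component of $C_{n+1}$ inside $T_n(x)$ other than $T_{n+1}(x)$ and a point of $\mathcal A$ in it; these points are distinct from $x$ and converge to $x$ since $\mathrm{diam}\,T_n\to 0$ — and then to invoke the topological characterization of the Cantor set. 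Either route is routine once the component/clopen bookkeeping above is in place.
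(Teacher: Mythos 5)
Your proposal is correct, and its engine is the same as the paper's: encode each point of $\mathcal A$ by the nested sequence of tori containing it, get injectivity from the maximal diameters tending to $0$, surjectivity from compactness of nested tori, continuity from the fact that components of each $C_n$ are clopen, and conclude with the continuous-bijection-from-compact-to-Hausdorff argument. The genuine difference is in how the target is identified with the Cantor set. The paper never assumes the branching number is constant along a level: it builds a concrete Cantor set $\mathcal C \subset [0,1]$ by subdividing intervals according to exactly the same (possibly non-uniform) branching data $m_{n, i_1, \ldots, i_{n-1}}$ as the tori, and sends $x$ to the intersection of the nested intervals indexed by the tori containing $x$; the coding space is thus realized inside $[0,1]$ rather than as an abstract product or tree boundary. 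Your first route buys a literal match with the product definition $D_1 \times D_2 \times \cdots$, but at the cost of a uniformity hypothesis the construction need not satisfy; your second route handles the general case but imports Brouwer's topological characterization of the Cantor set (nonempty, compact, metrizable, perfect, totally disconnected), a classical theorem that the paper neither proves nor cites, though your verification of its hypotheses (clopen separation of points, perfectness via choosing a sibling component at each level) is complete and correct. To be fair, the paper's route quietly faces the mirror-image issue: its non-uniformly branched $\mathcal C$ is not literally of the product form either, and identifying it with the Cantor set is left implicit there, so your explicit acknowledgment of this point is, if anything, more careful than the original.
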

\begin{proof}
Denote the number of tori embedded in $C_1$  by $m_1$, these tori are $$C_{2, 1}, \ldots, C_{2, m_1}$$
whose disjoint union is $C_2$.
For $1 \leq i_1 \leq m_1$ take the $i_1$-th torus $C_{2, i_1}$ and 
denote the number of tori embedded into it  by $m_{2, i_1}$, these tori are
$$C_{3, i_1, 1}, \ldots, C_{3, i_1, m_{2, i_1}}$$
whose disjoint union is $C_3$.
 Again for $1 \leq i_2 \leq m_{2, i_1}$
take the $i_2$-th torus $C_{3, i_1, i_2}$  and 
denote the number of tori embedded into it by $m_{3, i_1, i_2}$,
 these tori are
$$C_{4, i_1, i_2, 1}, \ldots, C_{4, i_1, i_2, m_{3, i_1, i_2}}$$
whose disjoint union is $C_4$.
In general in the $n$-th step
for $1 \leq i_n \leq m_{n, i_1, \ldots, i_{n-1}}$
take the $i_n$-th torus
$C_{n+1, i_1, \ldots, i_n}$
 and
denote the number of tori embedded into it by $m_{n+1, i_1, \ldots, i_n}$, 
 these tori are
$$C_{n+2, i_1, \ldots, i_n, 1}, \ldots, C_{n+2, i_1, \ldots, i_n, m_{n+1, i_1, \ldots, i_n}}$$
whose disjoint union is $C_{n+2}$.

Now we construct a Cantor  set $\mathcal C$ in the interval $[0,1]$. 
Divide $[0,1]$  into $2m_{1} -1$ closed intervals 
$$I_{2, 1}, \ldots, I_{2, 2m_{1} -1} \subset [0,1]$$
of equal length and disjoint interiors.
Then divide the $i_1$-th interval $I_{2, i_1}$, where $i_1$ is odd,
into $2m_{2, i_1} -1$ closed intervals
$$I_{3, i_1, 1}, \ldots, I_{3, i_1, 2m_{2, i_1}-1}$$ of equal length.
Then divide the $i_2$-th interval $I_{3, i_1, i_2}$, where $i_2$ is odd,
into $2m_{3, i_1, i_2} -1$ closed intervals
$$I_{4, i_1, i_2, 1}, \ldots, I_{4, i_1, i_2, 2m_{3, i_1, i_2}-1}$$
of equal length.
In the $n$-th step divide the $i_n$-th interval $I_{n+1, i_1, \ldots, i_n}$, where $i_n$ is odd,
into the closed intervals
$$I_{n+2, i_1, \ldots, i_n, 1}, \ldots, I_{n+2, i_1, \ldots, i_n,  2m_{n+1, i_1, \ldots, i_n}-1}$$
of equal length and so on.
So all the intervals $I_{n+1, i_1, \ldots, i_n }$ have length 
$$\frac{1}{(2m_{1} -1)\cdots(2m_{n, i_1, \ldots, i_{n-1}}-1)}.$$

Then let
$$\mathcal C = \bigcap_{n=1}^{\infty} \bigcup_{
{\begin{smallmatrix}
1 \leq i_1 \leq m_1    \\
 1 \leq i_2 \leq m_{2, i_1}   \\
\cdots \\
   1 \leq i_n \leq m_{n, i_1, \ldots, i_{n-1}} 
\end{smallmatrix}}
} I_{n+1, 2i_1-1, \ldots, 2i_n-1 }.$$
Assign to a point $x \in \cap_{n = 1}^{\infty} C_n$  the point $$\bigcap_{n=1}^{\infty} I_{n+1, 2i_1(x)-1, \ldots, 2i_n(x)-1 },$$ which is the
intersection
of the closed intervals containing $x$. 
This defines a map $$f \co \cap_{n = 1}^{\infty} C_n \to \mathcal C,$$ which is clearly
surjective.
It is injective as well because 
if $x \neq x'$, then 
for large $n$ they are in different $C_n$ so they are mapped into different intervals as well.
The map $f$ is continuous because
 if $x$ and $x'$ are in the same $C_n$ until some large enough $n$, then 
they are mapped to the same intervals until a large index so $f(x)$ and $f(x')$ are 
close enough. Then $f$ is a homeomorphism since its domain is compact and it maps injectively into a $T_2$ space.
\end{proof}

Of course the components of $\mathcal A$
are points so the decomposition space is obviously $\R^3$.
An important property of $\mathcal A$ is that it is \emph{wild}, i.e.\ 
there is no self-homeomorphism of $\R^3$ 
mapping $\mathcal A$ onto the  Cantor set in a  line segment.
To prove this, we study the local behaviour of the complement of $\mathcal A$.
\begin{defn}
Let $k \geq 0$.
A closed subset $A$ of a space $X$ is locally
$k$-co-connected ($k$-LCC for short) if for every point $a \in A$ and for every  nbhd $U$ of $a$ in $X$
there is a nbhd $V \subset U$ of $a$ in $X$ such that
if $\varphi \co \del D^{k+1} \to V-A$ is a map of the $k$-sphere, then
$\varphi$ extends 
to a map of $D^{k+1}$ into $U- A$.
\end{defn}

\begin{prop}
The set $\mathcal A$ in $\R^3$ is not $1$-LCC.
\end{prop}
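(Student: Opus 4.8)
The plan is to verify directly that the $1$-LCC condition fails at a point. Recall that $\mathcal A$ is built from the nested solid tori $C_1 \supset C_2 \supset \cdots$ with $\mathcal A=\cap_n C_n$. Fix a point $a\in\mathcal A$ and, for each $n$, let $T_n$ be the component of $C_n$ containing $a$; these are nested solid tori with $\mathrm{diam}\, T_n\to 0$. I take the fixed neighborhood $U=\interior T_1$ and propose the meridians $\mu_n$ of $T_n$ as the test loops. Each $\mu_n$ can be placed in $\interior T_n \setminus C_{n+1}\subset U\setminus\mathcal A$, and since $\mathrm{diam}\,T_n\to 0$, every neighborhood $V$ of $a$ contains $\mu_n$ for all large $n$. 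Thus to see that $\mathcal A$ is not $1$-LCC it suffices to show that no $\mu_n$ extends to a map $D^2\to U\setminus\mathcal A$, i.e.\ that $\mu_n$ is not null-homotopic in $\interior T_1\setminus\mathcal A$.

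First I would make a compactness reduction. If $\mu_n$ bounded a singular disk $f\colon D^2\to \interior T_1\setminus\mathcal A$, then $f(D^2)$ would be compact and disjoint from $\mathcal A=\cap_N C_N$; since the $C_N$ are nested and compact, $f(D^2)\cap C_N=\emptyset$ for some $N>n$ (exactly as in the proof that $\cap_n C_n$ is connected). Hence it is enough to prove the finite statement that $\mu_n$ is not null-homotopic in $\interior T_1\setminus C_N$ for every $N>n$.

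The crux is a linking lemma, proved by passing to infinite cyclic covers. In the infinite cyclic cover of a solid torus $T$ that unwraps its longitude (a copy of $\R\x D^2$, hence homeomorphic to $\R^3$), a necklace of linked tori running once around $T$ lifts to a doubly infinite linked chain, and a meridian of $T$ --- being null-homologous in $T$ --- lifts to a loop whose spanning disk meets the lifted chain, so that an honest linking number in $\R^3$ detects it. For the finite statement I would iterate this construction, building a tower of $N-n$ infinite cyclic covers following the nesting $T_n\supset T_{n+1}\supset\cdots\supset(\text{components of }C_N)$. The loop $\mu_n$ lifts to a loop $\tilde\mu_n$ and the hypothetical disk lifts to a disk avoiding the lifted copies of $C_N$; the goal is to show that in the top cover some lifted core $\tilde\kappa$ of a level-$N$ torus satisfies $\operatorname{lk}(\tilde\mu_n,\tilde\kappa)\neq 0$, which contradicts the existence of the lifted spanning disk.

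The main obstacle is exactly this linking computation in the tower of covers, and the difficulty is genuinely non-abelian: downstairs the ordinary linking number of $\mu_n$ with the core of any single removed torus vanishes, because each component of $C_{n+1}$ lies inside a ball in $T_n$ (the same observation that makes the components of $\mathcal A$ cellular), so no single linking number in $\interior T_1$ can see the obstruction. The role of the iterated covers is precisely to unwrap these cellular-but-linked components into honestly linked chains, and the heart of the argument --- where the hypothesis of at least three tori at each stage and the consecutive linking of the necklace are used --- is to check that the class of $\mu_n$ survives each unwrapping, so that $\operatorname{lk}(\tilde\mu_n,\tilde\kappa)$ is nonzero at the bottom. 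Granting this, $\mu_n$ bounds no disk in $U\setminus\mathcal A$, the test loops $\mu_n$ witness the failure of the $1$-LCC condition at $a$, and therefore $\mathcal A$ is not $1$-LCC.
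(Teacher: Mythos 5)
Your framework (the choice of the point $a$, the nested tori $T_n$, the meridians $\mu_n$ as test loops, and the compactness reduction from $\mathcal A$ to a finite stage $C_N$) is fine, and reducing the proposition to the statement that $\mu_n$ is not null-homotopic in $\interior T_1 - C_N$ is a legitimate route. The gap is in the only step that carries the actual content: the claimed linking-number detection in the infinite cyclic cover is false. Write $T_n \cong S^1 \times D^2$, so the cover is $\R \times D^2$; the chain $C_{n+1} \cap T_n$ is compact and lies in $S^1 \times D_r$ for some concentric disk $D_r$ of radius $r<1$, while $\mu_n$ sits at radius $r' $ with $r < r' < 1$. In the cover, the entire lifted chain lies in the sub-cylinder $\R \times D_r$, which is \emph{contractible} and disjoint from the lifted meridian $\tilde\mu_n$. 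Hence every closed curve $\tilde\kappa$ in the lifted chain (indeed every closed curve in $\R \times D_r$) is null-homologous in the complement of $\tilde\mu_n$, so $\operatorname{lk}(\tilde\mu_n,\tilde\kappa)=0$ for every lifted core: the fact that the cross-sectional disk \emph{meets} the lifted chain does not produce a nonzero linking number, because the intersections cancel algebraically. Unwrapping does not abelianize the obstruction; it is exactly as non-abelian upstairs as downstairs, so no linking number at any stage of such a tower can be nonzero. Moreover the tower itself is not well defined for your purposes: the hypothetical null-homotopy lives in $\interior T_1 - C_N$, not in $T_n$, so it does not lift to covers of the nested solid tori; and there is no homomorphism $\pi_1(\interior T_1 - C_N)\to\Z$ that ``unwraps the longitude of $T_{n+1}$,'' since the core of $T_{n+1}$ is a curve \emph{in} the space rather than in its complement, so linking with it is not a homotopy invariant of loops there.

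It is worth seeing how the paper sidesteps precisely this difficulty. What you are trying to prove is that meridians bound no \emph{singular} disks in the complement, which is essentially the Newman--Whitehead theorem on the group of the Antoine linkage \cite{NW37} --- a genuinely non-abelian result. The paper never proves it. Instead it establishes only the much easier \emph{embedded} statement (every smoothly embedded disk spanning the meridian of $C_1$ meets $\mathcal A$, by transversality and an innermost-circle induction on the stages), and then uses the $1$-LCC hypothesis itself, together with a Lebesgue-number subdivision and Dehn's lemma, to convert the small singular disks provided by $1$-LCC into small embedded ones; reassembling yields an embedded spanning disk disjoint from $\mathcal A$, contradicting the embedded statement. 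So the $1$-LCC assumption is consumed in reducing singular disks to embedded ones, which is exactly the reduction your approach lacks. If you wish to keep your (correct) reduction to essential meridians, the honest fix is to quote \cite{NW37} for the key lemma, or to reproduce an argument of loop-theorem/incompressibility type; the linking computation in cyclic covers cannot replace it.
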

\begin{proof}[Sketch of the proof]
At first we show that 
if $\alpha \co S^1 \to C_1$ is the meridian of the 
torus $C_1$, then
every smooth embedding $\tilde \alpha \co D^2 \to \R^3$ extending $\alpha$ is such that $\tilde \alpha( D^2)$ intersects $\mathcal A$.
If this was not true, then
$\tilde \alpha( D^2)$ would intersect at most finitely many tori $C_1, \ldots, C_n$
and it would be possible to perturb $\tilde \alpha$ to get a smooth embedding  transversal to each $\del C_n$.
Then it is possible to show that there is a disk $D_1 \subset D^2$ such that 
$\tilde \alpha (\del D_1)$ intersects some torus $\del C_{2, i_1}$
in a meridian. Inductively, $\tilde \alpha( D^2)$ has to intersect some torus $\del C_{m, i_1, \ldots, i_{m-1}}$
for arbitrarily large $m > n$, which is a contradiction. 
Suppose that $\mathcal A$ is $1$-LCC.
Let $\beta \co D^2 \to \R^3$ be a smooth embedding such that $\beta (\del D^2)$ is a meridian of $C_1$.
Cover $\beta( D^2 ) \cap \mathcal A$
by open sets $\{ U_{\gamma} \}_{\gamma \in \Gamma}$ around each of its points, then 
there is a covering $\{ V_{\gamma} \}_{\gamma \in \Gamma}$ such that for all $\gamma \in \Gamma$
we have 
$V_{\gamma} \subset U_{\gamma}$  and
each map $\del D^2 \to (\R^3 - \mathcal A) \cap V_{\gamma}$
can be extended to a map $D^2 \to (\R^3 - \mathcal A) \cap U_{\gamma}$.
We can also suppose that 
$\cup_{\gamma} U_{\gamma}$ is disjoint from $\beta(\del D^2)$.
By Lebesgue lemma there is a refinement of $D^2$ into finitely many small disks with disjoint interiors
such that each of their boundary circles is mapped by $\beta$ into some $V_{\gamma}$.
After a small perturbation we can suppose that each of the $\beta$-images of 
these boundary circles is disjoint from  $C_n$ for some common large $n$ but it is still in 
some $V_{\gamma}$.
Now change $\beta$ on each of the small disks to get a map into $(\R^3 - \mathcal A) \cap U_{\gamma}$.
By Dehn's lemma there are embeddings as well  of the small disks  into $(\R^3 - \mathcal A) \cap U_{\gamma}$.
In this way we get an embedding of the original disk $D^2$ which is disjoint from $\mathcal A$.
This contradicts to 
the fact that every embedded disk $D^2 \subset \R^3$ with boundary circle being a meridian of $C_1$  intersects $\mathcal A$.
\end{proof}

The standard Cantor set $C \subset \R\x \{ 0 \} \x \{ 0 \} \subset \R^3$ is $1$-LCC, because
having a small loop in its complement $\R^3 - C$ yields by approximation a small smooth loop in $\R^3 - C$
 transversal to and disjoint from $\R \x \{ 0 \} \x \{ 0 \}$.
Then deform this loop by compressing it in  a direction  parallel to $\R\x \{ 0 \} \x \{ 0 \}$
until the loop sits in the plane $\{ x \} \x \R^2$ for some number $x \in \R^3 - C$.
After these the loop can be squeezed easily inside this plane to a point in $\R^3 - C$.
This implies that  Antoine's necklace is  a wild Cantor set in $\R^3$.

\subsection{Bing decomposition}

If in the construction of Antoine's necklace
there are always two tori components of $C_{n+1}$ 
in each component of $C_n$, then
we call the arising decomposition Bing decomposition.
Apriori there could be many different Bing decompositions depending on
how the solid tori are embedded into each other.
It is not obvious that we can arrange the components of $C_n$ 
embedded in such a way that $\cap_{n = 1}^{\infty} C_n$
is a Cantor set, which would follow if the maximal diameter of the tori in $C_n$ converges to $0$.
A random
defining sequence can be seen in Figure~\ref{bing_random}.

\begin{figure}[h!]
\begin{center}
\epsfig{file=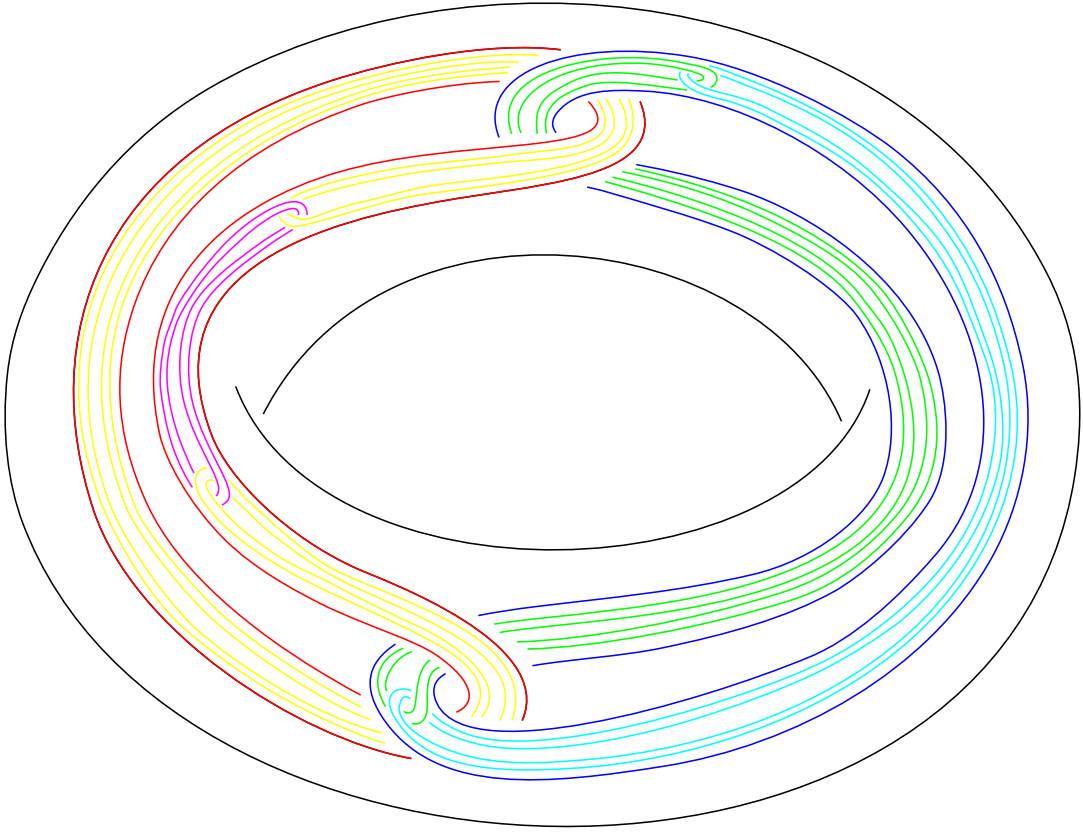, height=11cm}
%\put(1.6, 2.5){$\subset \R^2$}
%\put(-7.1, 3.6){$g(\CP^{2k}) - R$}
%\put(-7.1, 0.5){$T \circ g(\CP^{2k}) - T(R)$}
\end{center} 
\caption{A sketch of a defining sequence of the Bing decomposition. 
We can see the torus $C_1$, the two torus components of $C_2$ and the four torus components of $C_3$.
The maximal diameter of tori in $C_n$ does not converge
to $0$ necessarily.}
\label{bing_random}
\end{figure}

Now we construct a defining sequence, where the maximal diameter of the tori in $C_n$ converges to $0$.
For this, consider the following way to define a finite sequence of finite sequences of embeddings:
$$D_{0},$$
$$D_{0} \supset D_{2,1},$$
$$D_{0} \supset D_{3,1} \supset D_{3,2},$$
$$D_{0} \supset D_{4, 1} \supset \cdots \supset D_{4,3},$$
\begin{figure}[h!]
\begin{center}
\epsfig{file=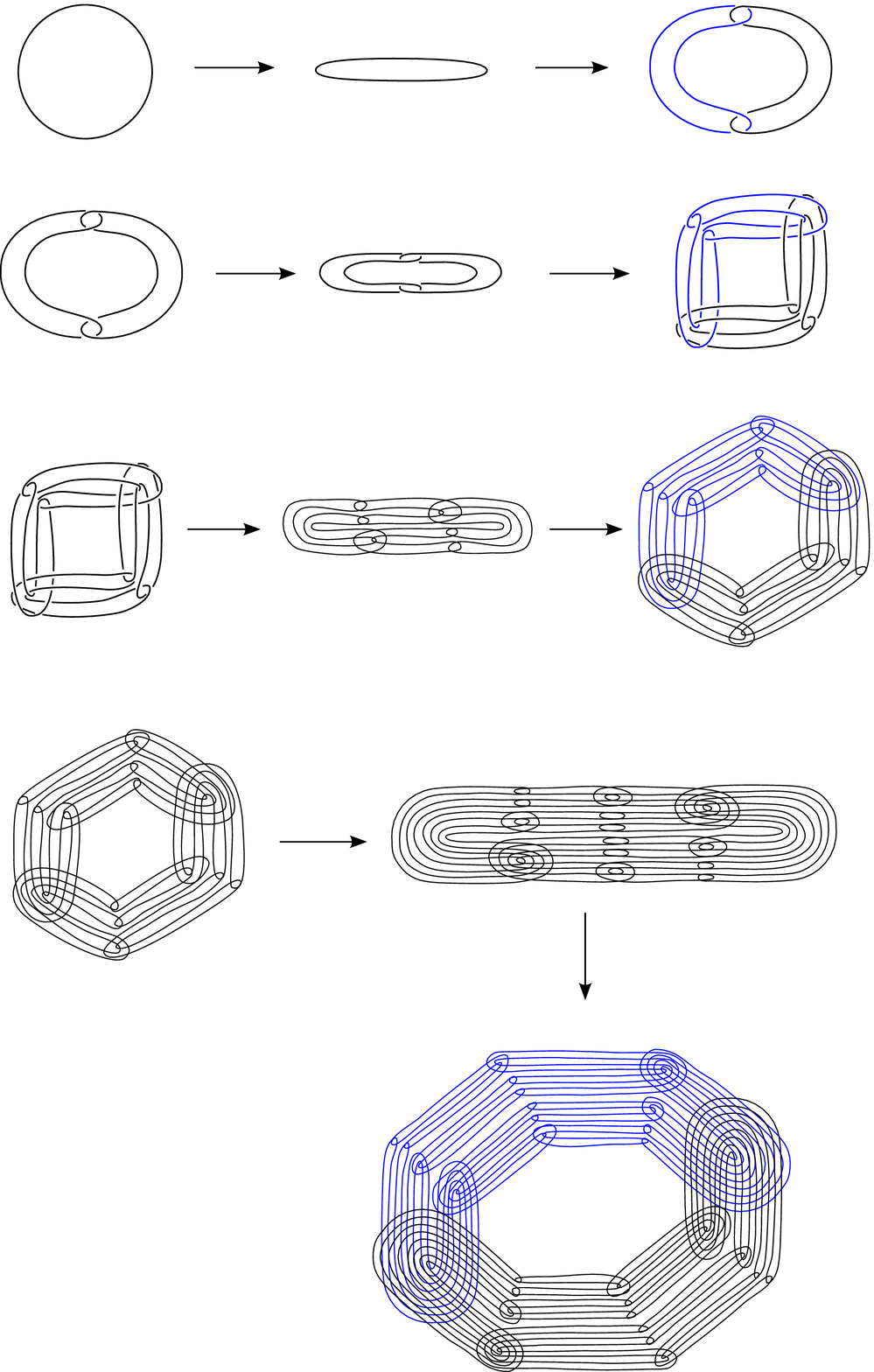, height=17.1cm}
\put(-12.5, 16.1){$D_{1}$}
\put(0, 16.1){$D_{2,1}$}
\put(0, 13.8){$D_{3,2}$}
\put(0.5, 10.6){$D_{4,3}$}
\put(-0.5, 2.1){$D_{5,4}$}
\end{center} 
\caption{A sketch  of constructing the tori $D_{n+1, n}$.
 Instead of the solid tori we just draw their center circles.
We always take the previously obtained  linked tori $D_{n, n-1}$, 
squeeze them to become ``flat''  as the figure shows,
then curve them a little 
 and link them with another copy at the two ``endings''. Hence we get $D_{n+1, n}$. 
 The sequence of embeddings 
 $D_{0} \supset D_{n+1, 1} \supset \cdots \supset D_{n+1,n}$
 can be kept in sight by checking all the smaller linkings.}
\label{bing_deform}
\end{figure}
\ldots, etc., where $D_{n, 0} = D_{0}$ is a solid torus,
$D_{n,k}$ is a disjoint union of $2^{k}$ copies of 
solid tori and
the components of $D_{n,k}$ are pairwise embedded in the components of 
$D_{n,k-1}$, moreover 
these pairs are linked just like in the defining sequence of Bing decomposition, for further
subtleties see Figure~\ref{bing_deform}. 

Arriving to the tori $D_{n+1,n}$ 
and assuming that their meridional size is small enough,
we obtain a regular $2n$-gon-like arrangement of $2^{n}$ copies of solid tori
as Figure~\ref{bing_deform} shows.
Two conditions are satisfied: the meridional size of all the tori is small 
and
an ``edge'' of this $2n$-gon is also small. This means that if this 
$D_{n+1,n}$ is embedded into a torus (as the figure suggests) whose meridional size is small, then the
maximal diameter of the torus components of
 $D_{n+1,n}$ is small if $n$ is large.

\begin{prop}
There is a defining sequence $C_1, \ldots, C_n, \ldots$ of the Bing decomposition, where
the maximal diameter of tori in $C_n$ converges to $0$.
Hence $\cap_{n=1}^{\infty} C_n$ is homeomorphic to the Cantor set.
\end{prop}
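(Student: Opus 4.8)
The plan is to turn the one-shot deformation $D_{n,n-1}\rightsquigarrow D_{n+1,n}$ of Figure~\ref{bing_deform} into a reusable \emph{gadget} that replaces a single thin solid torus by many genuinely Bing-doubled but much smaller solid tori, and then to iterate this gadget with $n$ growing so fast that the maximal diameter of the tori shrinks geometrically. Once a Bing defining sequence $C_1\supset C_2\supset\cdots$ has been produced whose maximal diameter of the tori in $C_n$ tends to $0$, the final assertion is immediate: $\cap_{n=1}^{\infty}C_n$ is then a nested intersection of finitely many ($2^{n}$) pairwise disjoint solid tori, with two components inside each parent and diameters tending to $0$, so the same argument used above for Antoine's necklace (assigning to each point the nested sequence of interval-addresses, which is injective, surjective and continuous precisely because the diameters shrink) exhibits a homeomorphism onto a Cantor set in $[0,1]$. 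Thus the entire content lies in the geometric construction, and that is where the work is concentrated.

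The obstacle one must overcome is that a single Bing doubling is \emph{geometrically essential}: although each child torus is null-homotopic in its parent, the pair cannot be isotoped into a ball, and so its longitudinal reach is comparable to that of the parent. Hence iterating the naive doubling never shrinks diameters, and one cannot simply ``squeeze a Bing pattern into a ball''. The purpose of the squeeze--curve--relink deformation is instead to fold the essential pattern back on itself across several levels: after $n$ steps the tori $D_{n+1,n}$ are arranged as the $2n$-gon of $2^{n}$ tori in Figure~\ref{bing_deform}, all of small meridional size and, crucially, each occupying only a short ``edge''. The smallness of the finest tori is therefore achieved not by making any one doubling inessential, but by making every parent small, so that ``essential in a small parent'' forces the children to be small as well.

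I would isolate this as a gadget claim: \emph{for every solid torus $T$ of small meridional size and every $\ep>0$ there is an $n$ and an embedding of $D_{n+1,n}$ into $\interior T$ realizing $n$ successive Bing doublings, such that the $2^{n}$ component tori are again of small meridional size and of diameter $<\ep$.} Granting it, I build the sequence in blocks. Let $C_1$ be a thin unknotted solid torus. Assuming the sequence has been built up to a stage $C_m$ all of whose components are thin of diameter $<2^{-(j-1)}$, I apply the gadget in each component with $\ep=2^{-j}$; this appends $n_j$ new Bing-doubling stages $C_{m+1}\supset\cdots\supset C_{m+n_j}$ whose final tori are thin of diameter $<2^{-j}$, while every intermediate stage of the block lies inside a parent component and so has diameter $<2^{-(j-1)}$. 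Concatenating the blocks over $j=1,2,\dots$ gives an infinite defining sequence of the Bing decomposition (each single step is an honest Bing doubling) for which the maximal diameter of the tori in $C_n$ tends to $0$.

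Feeding this into the Cantor-set argument recalled in the first paragraph completes the proof. The genuinely hard part is the gadget claim: one must perform the squeeze, curve and relink isotopies so that at \emph{every} one of the $n$ internal levels the two child tori remain geometrically essential in their immediate parent (so that the construction really is a defining sequence of the Bing decomposition, and not a degenerate one), while simultaneously controlling both the meridional radii and the edge-lengths of the $2n$-gon sharply enough to guarantee the quantitative bound $\mathrm{diam}<\ep$. Establishing these estimates quantitatively through the folding, rather than merely the existence of such a pattern, is the main obstacle.
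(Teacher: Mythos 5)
Your proposal is correct and takes essentially the same route as the paper: both iterate the squeezed $2n$-gon pattern $D_{n+1,n}$ of Figure~\ref{bing_deform} inside tori of small meridional size, in blocks arranged so that the maximal diameter of the tori drops below a prescribed sequence tending to $0$, and then invoke the same nested-intersection argument used for Antoine's necklace to identify $\cap_{n=1}^{\infty} C_n$ with the Cantor set. The only cosmetic difference is that you package the meridional-size control into your gadget claim, whereas the paper regains small meridional size by interleaving ordinary Bing-doubling stages between insertions of the folded pattern.
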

\begin{proof}
Let $\ep_n > 0$ be a sequence whose limit is $0$.
Let $n_1$ be so large  that in $C_{n_1}$ in a defining sequence  the meridional size of tori is smaller than $\ep_1$.
Let $m_1$ be so large  that we can embed 
 $D_{m_1+1,m_1}$ into the torus components of $C_{n_1}$ so
 that the maximal diameter of tori in the obtained $C_{n_1 + m_1}$
 is smaller then $\ep_1$.
 Then let $n_2 > n_1 + m_1$ be so large that 
in a continuation of the defining sequence in $C_{n_2}$ the meridional size of tori is smaller than $\ep_2$.
Let $m_2$ be so large  that we can embed 
 $D_{m_2+1,m_2}$ into the torus components of $C_{n_2}$ so
 that the maximal diameter of tori in the obtained $C_{n_2 + m_2}$
 is smaller then $\ep_2$. And so on.
 It is easy to see that the maximal diameter of tori converges to $0$.
 \end{proof}

This implies that the decomposition space of this decomposition is $\R^3$.
For an arbitrary defining sequence 
the space $\cap_{n=1}^{\infty} C_n$ may be not the Cantor set, however the decomposition space 
could be still homeomorphic to the ambient space $\R^3$.
It is a very important observation that the embedding of the tori in $D_{n+1,n}$ 
can be obtained by an isotopy of  $C_1 \subset  \cdots \subset C_{n+1}$
in any defining sequence, see \cite{Bi52}.
By such an isotopy 
for a given defining sequence
we can manage something similar to the previous statement: 
 if $n$ is large enough, then the meridional size of the torus components in $C_n$ is smaller than a given $\ep > 0$.
Then  apply the required isotopy for $C_{n+1}, \ldots, C_{n+k}$ for some large $k$ to make
 the maximal diameter of the torus components of $C_{n+k}$ smaller than $\ep$.
 Note that since $n$ is large enough and all the isotopy happens inside $C_n$, 
 all the isotopy happens  inside an arbitrarily small nbhd of $\cap_{n=1}^{\infty} C_n$.
 This means that for every $\ep > 0$ there is
 a self-homeomorphism $h$ of $\R^3$ with  support $C_1$ such that 
 $h( \mathcal D ) < \ep$
  for every decomposition element $\mathcal D \subset \cap_{n=1}^{\infty} C_n$
  and also 
  $\pi \circ h( \mathcal D)$ stays in the $\ep$-nbhd
  of $\pi ( \mathcal D)$ for some metric on the decomposition space. 
  This condition is called shrinkability criterion
  and it implies that 
  the decomposition space is homeomorphic to the ambient space $\R^3$
  as we will see in the next section.

\bigskip{\Large{\section{Shrinking}}}
\bigskip\large

Let $X$ be a topological space and $\mathcal D$ a decomposition of $X$.
An open cover $\mathcal U$ of $X$ is called $\mathcal D$-saturated if every $U \in \mathcal U$
 is a union of decomposition elements.

\begin{defn}[Bing shrinkability criterion]
Let $\mathcal D$ be a usc decomposition of the space $X$.
We say $\mathcal D$ is \emph{shrinkable}
if
for every open cover $\mathcal V$ and $\mathcal D$-saturated open cover 
$\mathcal U$ there is a self-homeomorphism $h$ of $X$ such that 
for every $D \in \mathcal D$ the set 
$h(D)$ is in some $V \in \mathcal V$
and for every $x \in X$ there is a $U \in \mathcal U$
such that $x, h(x) \in U$. In other words, $h$ shrinks the elements of $\mathcal D$ to arbitrarily small sets and $h$ is $\mathcal U$-close
to the identity. 
We say $\mathcal D$ is \emph{strongly shrinkable}
if
for every open set $W$ containing all the non-degenerate elements of $\mathcal D$
the decomposition $\mathcal D$  is shrinkable so that the support of
$h$ is in $W$.
\end{defn}
In other words $\mathcal D$ is shrinkable if 
its elements can be made small enough simultaneously so that
 this shrinking process does not move the points of $X$ too far in the sense of  measuring the distance in the decomposition space.
If $X$ has a shrinkable decomposition, then we expect that 
the local structure of $X$ is similar to the structure of the nbhds of the decomposition elements. 
 
\begin{prop}
Let $X$ be a regular space and let $\mathcal D$ be a shrinkable usc decomposition of $X$.
If every $x \in X$ has arbitrarily small nbhds satisfying a fixed topological property, then 
every $D \in  \mathcal D$ has arbitrarily small nbhds satisfying the same property.
\end{prop}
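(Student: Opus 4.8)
The plan is to reduce everything to the following local statement: for a fixed $D\in\mathcal D$ and an arbitrary nbhd $W$ of $D$, I must produce a nbhd $M$ of $D$ with $M\subset W$ enjoying the given topological property $\mathcal P$. Because $\mathcal D$ is usc the element $D$ is compact, and since $X$ is regular I can interleave closures with saturated open sets. Using Lemma~\ref{saturated} together with regularity applied to the compact set $D$, I would first build a short tower of saturated open neighborhoods
\[
D\subset\Omega_2\subset\overline{\Omega_2}\subset\Omega_1\subset\overline{\Omega_1}\subset\Omega\subset\overline{\Omega}\subset\Omega_0\subset W .
\]
Each step exists because a compact set and a saturated open set containing it can be separated by a closure, and the separating open set can then be re-saturated, again by Lemma~\ref{saturated}. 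These gaps are what will let me control a homeomorphism at two different scales simultaneously.

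Next I would feed two covers into the shrinkability of $\mathcal D$. For the closeness I take a $\mathcal D$-saturated open cover $\mathcal U$ fine enough that no member meets both $\overline{\Omega_2}$ and $X\setminus\Omega_1$, and no member meets both $\overline{\Omega}$ and $X\setminus\Omega_0$; such a $\mathcal U$ exists since every element has arbitrarily small saturated neighborhoods by Lemma~\ref{saturated}, while the ``far'' members are taken inside the saturated open complements $X\setminus\overline{\Omega_2}$ and $X\setminus\overline{\Omega}$. For the smallness I take an open cover $\mathcal V$ whose members are $\mathcal P$-neighborhoods of their points — available by hypothesis — chosen so that every member centered in $\Omega$ lies in $\Omega$ and every member centered off $\Omega$ lies in $X\setminus\overline{\Omega_1}$. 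Shrinkability now yields a self-homeomorphism $h$ of $X$ with $h(D)\subset V$ for a single $V\in\mathcal V$ and with $h$ (hence $h^{-1}$) $\mathcal U$-close to the identity. I set $M=h^{-1}(V)$; it is open, contains $D$ since $h(D)\subset V$, and has property $\mathcal P$ because $\mathcal P$ is topological and $h^{-1}$ carries the $\mathcal P$-set $V$ homeomorphically onto $M$.

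It remains to see $M\subset W$, and this is the crux where the two scales must cooperate. For $x\in D\subset\overline{\Omega_2}$ the member of $\mathcal U$ witnessing $x,h(x)$ meets $\overline{\Omega_2}$, so by the first separation property it cannot meet $X\setminus\Omega_1$ and hence lies in $\Omega_1$; thus $h(D)\subset\Omega_1$. Then $V$ meets $\Omega_1$, which forces the center of $V$ into $\Omega$ (a member centered off $\Omega$ lies in $X\setminus\overline{\Omega_1}$ and misses $\Omega_1$), so $V\subset\Omega\subset\overline{\Omega}$. Dually, for $y\in\overline{\Omega}$ the member witnessing $h^{-1}(y),y$ meets $\overline{\Omega}$, so by the second separation property it stays inside $\Omega_0$, giving $h^{-1}(\overline{\Omega})\subset\Omega_0$. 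Combining, $M=h^{-1}(V)\subset h^{-1}(\overline{\Omega})\subset\Omega_0\subset W$, as required. The main obstacle is exactly the construction and bookkeeping of $\mathcal U$ and $\mathcal V$ so that these two confinements — forcing $h(D)$ deep into $\Omega_1$ while simultaneously trapping $h^{-1}(V)$ inside $\Omega_0$ — hold at once; here the compactness of the elements and the saturation supplied by Lemma~\ref{saturated} (so that a decomposition element meeting a saturated set lies entirely inside it) are precisely what make the separated covers available.
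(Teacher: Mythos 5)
Your proof is correct, and its skeleton matches the paper's: feed shrinkability a $\mathcal D$-saturated cover $\mathcal U$ encoding a nested tower of neighborhoods of $D$ together with a cover $\mathcal V$ by property-$\mathcal P$ sets, set $M=h^{-1}(V)$, and use $\mathcal U$-closeness to confine $M$ inside $W$. The genuine difference is where the tower is built. The paper works upstairs: it invokes the regularity of $X_{\mathcal D}$ (proved earlier for usc decompositions of regular spaces), chooses $\pi(D)\subset U_3\subset\cl U_3\subset U_2\subset\cl U_2\subset U_1$ in the quotient, pulls back to the four-member saturated cover $\pi^{-1}(U_3)$, $\pi^{-1}(U_2)-D$, $\pi^{-1}(U_1-\cl U_3)$, $X-\pi^{-1}(\cl U_2)$, and takes $\mathcal V$ to be a $\mathcal P$-cover refining $\mathcal U$; the containment $h^{-1}(V)\subset W$ then falls out of a short contradiction. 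You work downstairs, getting the tower of saturated open sets from regularity of $X$, compactness of the decomposition elements, and Lemma~\ref{saturated}, and you replace the refinement condition on $\mathcal V$ by the centered-in-$\Omega$/off-$\Omega$ dichotomy, which yields a direct two-scale confinement ($h(D)\subset\Omega_1$ forces $V\subset\Omega$, while $h^{-1}(\cl\Omega)\subset\Omega_0$). Your route never uses regularity of the quotient, so it is somewhat more self-contained; the paper's is shorter, since pulling back a chain from $X_{\mathcal D}$ produces the saturated cover in one stroke and refinement automates the second confinement. One slip to correct: $X\setminus\cl\Omega_2$ and $X\setminus\cl\Omega$ are open but in general \emph{not} saturated, because the closure of a saturated open set can fail to be saturated (singletons can accumulate on a nondegenerate element). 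This is harmless for your argument: the far members of $\mathcal U$ are not these complements themselves but the saturated neighborhoods that Lemma~\ref{saturated} supplies inside them, and for an element lying in $\Omega_0$ but missing $\Omega_1$ that neighborhood should be taken inside $\Omega_0\setminus\cl\Omega_2$ so that your second separation constraint also holds. With that case spelled out, every member you need exists and the proof is complete.
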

\begin{proof}
Let $W$ be an arbitrary nbhd of an element  $D \in \mathcal D$.
Then there is a saturated nbhd $\tilde U_1$ of $D$ such that $\tilde U_1 \subset W$. Let $U_1$ denote  $\pi(\tilde U_1)$.
Since $X_{\mathcal D}$ is regular,
there are open sets $U_2$ and $U_3$ such that
$$\pi(D) \subset U_3 \subset \cl U_3 \subset U_2 \subset \cl U_2 \subset U_1.$$
Then take the sets 
$$
\pi^{-1}(U_3),\mbox{\ }  \pi^{-1}( U_2) - D,\mbox{\ } \pi^{-1}( U_1 - \cl U_3 ),\mbox{\ and\  } X - \pi^{-1}(\cl U_2 ),$$
see Figure~\ref{localprop}.
These yield a $\mathcal D$-saturated open cover $\mathcal U$ of $X$.
Let $\mathcal V$ be an open cover of $X$ which refines $\mathcal U$
and consists of open sets with our fixed property.
Since $\mathcal D$ is shrinkable, we have a homeomorphism
$h \co X \to X$ such that 
$h(D) \subset V$ for some $V \in \mathcal V$ and
$h$ is $\mathcal U$-close to the identity.
Then $D \subset h^{-1}(V)$ so 
it is enough to show that 
$$h^{-1}(V) \subset W.$$
Suppose there exists
some $x \in h^{-1}(V) - W$, then 
$x \in h^{-1}(V) - \pi^{-1}(U_1)$ since $\pi^{-1}(U_1) \subset W$.
Hence among the sets in $\mathcal U$ only $X - \pi^{-1}(\cl U_2 )$
contains $x$ so $h(x) \in V$ has to be 
in $X - \pi^{-1}(\cl U_2 )$ as well.
This implies that 
$V \subset X - \pi^{-1} ( \cl U_3 )$
because $V$ is a subset of some sets in $\mathcal U$.
Also we know that $h(D) \subset \pi^{-1}(U_3)$ since
$D \subset \pi^{-1}(U_3)$.
This means that $h(D) \subset V \subset X - \pi^{-1} ( \cl U_3 )$ cannot hold so
$h^{-1}(V) \subset W$.
\end{proof}
\begin{figure}[h!]
\begin{center}
\epsfig{file=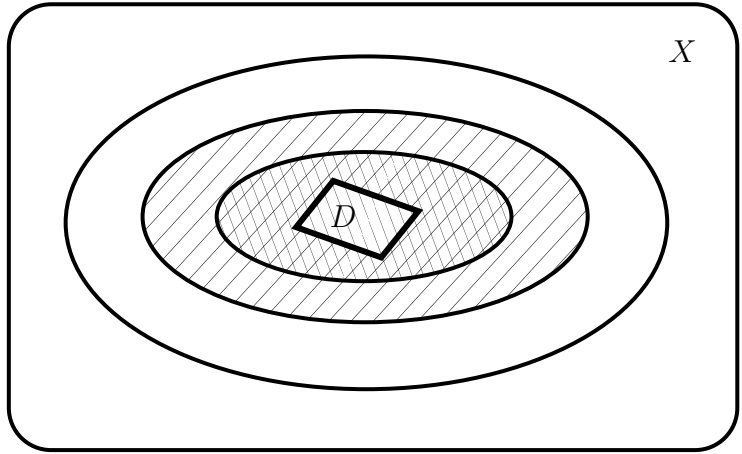, height=6cm}
\put(-1, 5.2){$X$}
\put(-5.5, 3){$D$}
%\put(0, 13.8){$D_{3,2}$}
%\put(0.5, 10.6){$D_{4,3}$}
%\put(-0.5, 2.1){$D_{5,4}$}
\end{center} 
\caption{The set $D$ and the $\pi$-preimages of the sets $U_3 \subset U_2 \subset U_1$ in $X$.
In the figure the sets $\pi^{-1}(U_3)$ and $\pi^{-1}( U_2) - D$ are shaded.
}
\label{localprop}
\end{figure}

For example every decomposition element of a shrinkable decomposition of a manifold is cellular.

It is often not too difficult to check whether a decomposition of a space $X$ is shrinkable. 
A corollary of shrinkability is
that the decomposition space is homeomorphic to $X$.
This is often applied when we want to construct embedded manifolds 
 and the construction uses mismatched pieces, which we eliminate by 
 taking them as the decomposition elements
 and then looking at the decomposition space.

\begin{defn}[Near-homeomorphism, approximating by homeomorphism]
Let $X$ and $Y$ be topological spaces.
An $f \co X \to Y$ surjective map is a \emph{near-homeomorphism}
if
for every open covering  $\mathcal W$ 
of $Y$ there is a homeomorphism $h \co X \to Y$ such that
for every $x \in X$ the points $f(x)$ and $h(x)$ are in some $W \in \mathcal W$,
 in other words $h$ is \emph{$\mathcal W$-close} to $f$.
\end{defn}

If $(Y, \varrho)$ is a metric space, then $f$ being  a near-homeomorphism implies
that $f$ can be approximated by homeomorphisms in the possibly
infinite-valued metric $d(f, g) = \sup \{ \varrho(f(x), g(x)) \}$.
Notice that if $f \co X \to Y$ is a near-homeomorphism, then 
$X$ and $Y$ are actually homeomorphic.

The main result is that a
usc decomposition  yields a homeomorphic decomposition space if
the decomposition is shrinkable.
This is applied in major $4$-dimensional results:
in the disk embedding theorem and in the proof of the $4$-dimensional topological 
Poincar\'e conjecture \cite{Fr82, BKKPR}.
It is extensively applied in constructing  approximations of manifold embeddings in dimension $\geq 5$, see
\cite{AC79} and Edwards's cell-like approximation theorem.

For an open cover $\mathcal W$ of a space $X$ and a subset $A \subset X$
let $St(A, \mathcal W)$ denote the subset 
$$
\bigcup \{ W \in \mathcal W :  W \cap A \neq \emptyset \}.
$$
This is called the \emph{star} of $A$ and it is a nbhd of $A$.
Of course
if $A \subset B$, then $St(A, \mathcal W) \subset St(B, \mathcal W)$.
If
$\mathcal W'$ is an open cover which is a refinement of the open cover $\mathcal W$,
then obviously
$St(A, \mathcal W') \subset St(A, \mathcal W)$.
We will use often that if the covering $\mathcal W'$ is a \emph{star-refinement}
of the covering $\mathcal W$, that is
the collection 
$$\left\{ St(W_{\al}, \mathcal W') : W_{\al} \in \mathcal W' \right\}$$
of stars of elements of $\mathcal W'$
is a refinement of $\mathcal W$, then 
for every point $x \in X$ we have
$
St(\{ x \}, \mathcal W' ) \subset W$ for some $W \in \mathcal W$.

The following theorem requires a complete metric on the space $X$, for example
the statement holds for an arbitrary manifold.

\begin{thm}\label{shrinkinglocallycompact}
Let $\mathcal D$ be a usc decomposition of a  space $X$ admitting a complete metric. 
Then  
%Then the decomposition map $\pi \co X \to X_{\mathcal D}$
%is a near-homeomorphism if and only if
%$\mathcal D$ is shrinkable.
the following are equivalent:
\begin{enumerate}[\rm (1)]
\item
the decomposition map $\pi \co X \to X_{\mathcal D}$
is a near-homeomorphism,
\item
$\mathcal D$ is shrinkable.
\end{enumerate}
If additionally $X$ is also locally compact and separable, then shrinkability is equivalent to
\begin{enumerate}[\rm (3)]
\item
if $C \subset X$ is an arbitrary compact set, $\ep > 0$ and $\mathcal U$ is
a $\mathcal D$-saturated open cover of $X$,
then there is a homeomorphism $h \co X \to X$ such that 
${\mathrm{diam}}\thinspace h(D) < \ep$ for every $D \subset C$, $D \in  \mathcal D$
and $h$ is $\mathcal U$-close to the identity.
\end{enumerate}
\end{thm}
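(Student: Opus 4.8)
The plan is to establish the chain of equivalences in two stages: first $(1)\Leftrightarrow(2)$ for any space admitting a complete metric, and then, under the extra hypotheses, $(2)\Leftrightarrow(3)$. Throughout I would fix a bounded complete metric $d$ on $X$ (replacing $d$ by $\min(d,1)$), and record one preliminary that makes Baire category available: since $\mathcal D$ is usc its elements are compact and by Lemma~\ref{saturated} the map $\pi$ is closed, so $\pi$ is perfect; hence $X_{\mathcal D}$ is completely metrizable, and I fix a bounded complete metric $\varrho$ on it. For the routine implication $(1)\Rightarrow(2)$, given an open cover $\mathcal V$ of $X$ and a $\mathcal D$-saturated cover $\mathcal U$, I would invoke the near-homeomorphism hypothesis twice to produce homeomorphisms $F,G\co X\to X_{\mathcal D}$ both close to $\pi$ with respect to a common star-refinement of $\pi(\mathcal U)$ and of the $\pi$-image of a refinement of $\mathcal V$; then $h:=G^{-1}\circ F$ is a self-homeomorphism of $X$ which is $\mathcal U$-close to the identity and carries each $D$ into some element of $\mathcal V$. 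Here one uses that $F$ sends the single point $\pi(D)$, and hence $D$, into a $\varrho$-small set, while the closeness of $G$ to $\pi$ together with perfectness of $\pi$ keeps $G^{-1}$ of that small set confined near a single fiber; because both notions are phrased via arbitrary covers rather than a single gauge, no global modulus for $G^{-1}$ on the noncompact $X$ is needed.

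The heart of the matter is $(2)\Rightarrow(1)$, which I would prove by Baire category. Set $\mathcal A=\overline{\{\pi\circ h : h\in\mathcal H(X)\}}$ inside $C(X,X_{\mathcal D})$ with the sup-$\varrho$ metric, where $\mathcal H(X)$ is the group of self-homeomorphisms of $X$; this is a closed subset of a complete space, hence itself complete, and it contains $\pi$. For each $n$ put $\mathcal A_n=\{f\in\mathcal A : \exists\,\delta>0 \text{ with } \varrho(f(x),f(y))<\delta \Rightarrow d(x,y)<1/n\}$. Each $\mathcal A_n$ is open in $\mathcal A$, since the $\delta$-condition survives small sup-perturbations. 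Density of $\mathcal A_n$ is where shrinkability enters: given $f=\pi\circ h_0$ from the dense set of adjusted maps and $\ep>0$, shrinkability passes to the homeomorphic copy $\mathcal D'=h_0^{-1}(\mathcal D)$, so there is a self-homeomorphism $k$ with $\mathrm{diam}\,k(h_0^{-1}(D))<1/n$ for every $D$ and with $k$ close to the identity along a $\mathcal D'$-saturated cover on which $f$ oscillates by less than $\ep$. Then $g:=f\circ k^{-1}=\pi\circ(h_0k^{-1})$ lies within $\ep$ of $f$ and collapses only the small sets $k(h_0^{-1}(D))$, placing it in $\mathcal A_n$. By Baire, $\bigcap_n\mathcal A_n$ is dense in $\mathcal A$; any member $f$ is injective with uniformly continuous inverse, and, being a uniform limit of the surjective perfect maps $\pi\circ h$, also surjective and closed, hence a homeomorphism $X\to X_{\mathcal D}$. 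Such $f$ occur arbitrarily close to $\pi\in\mathcal A$, so $\pi$ is a near-homeomorphism.

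The hard part will be the density of $\mathcal A_n$ in the strong \emph{uniform} form just used. Shrinking each individual fiber to a set of diameter $<1/n$ does not by itself furnish a single $\delta$ forcing $\varrho(f(x),f(y))<\delta\Rightarrow d(x,y)<1/n$, because points in distinct but nearby fibers may still be far apart in $X$. Converting the purely qualitative shrinking supplied by the criterion into this uniform metric control is the crux, and it is where the usc hypothesis (compact fibers, closedness of $\pi$) and completeness must be combined; I expect to need a compactness or equicontinuity argument that localizes the estimate fiber-by-fiber and then globalizes it. A secondary technical point is confirming that members of $\mathcal A$ stay surjective and closed in the limit, which again rests on $\pi$ being perfect.

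Finally, under local compactness and separability I would close the loop $(2)\Leftrightarrow(3)$. The implication $(2)\Rightarrow(3)$ is immediate: apply shrinkability with $\mathcal V$ chosen so every member has diameter $<\ep$, obtaining $h$ with $\mathrm{diam}\,h(D)<\ep$ for all $D$ (a fortiori for $D\subset C$) and $h$ that is $\mathcal U$-close to the identity. For $(3)\Rightarrow(2)$ I would exhaust $X$ by compacta $C_1\subset\interior C_2\subset C_2\subset\cdots$ with $\bigcup C_i=X$, and apply $(3)$ successively on the $C_i$ with rapidly decreasing gauges $\ep_i$ and progressively finer $\mathcal D$-saturated covers, composing the resulting homeomorphisms with carefully controlled supports so that later stages neither undo earlier shrinking nor displace points outside the prescribed $\mathcal U$-sets; completeness of $d$ then guarantees that the infinite composition converges to a self-homeomorphism of $X$ witnessing shrinkability for the given $\mathcal V$ and $\mathcal U$. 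The only real difficulty here is the bookkeeping that keeps the composition convergent and $\mathcal U$-close throughout.
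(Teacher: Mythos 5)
Your reduction of (2)~$\Rightarrow$~(1) to a Baire category argument has a genuine gap at exactly the point you flag, and the gap cannot be repaired in the form you propose: the sets $\mathcal A_n$, defined by the existence of a \emph{global} $\delta>0$ with $\varrho(f(x),f(y))<\delta \Rightarrow d(x,y)<1/n$, are simply not dense in $\mathcal A$ when $X$ is not compact. Already for the trivial decomposition of $X=\R$ (all elements singletons, $\pi=\mathrm{id}$, certainly shrinkable) take $f=\mathrm{arcsinh}\in\mathcal A$ and any $g$ with $\sup_x |g(x)-f(x)|<\ep$. If $g$ satisfied your condition for some $\delta\in(0,1)$, then the $M+1$ points $x_j=j/n$, $j=0,\dots,M$, being pairwise at $d$-distance $\geq 1/n$, would have pairwise $\varrho$-distances $\geq\delta$ under $g$, so the values $g(x_0),\dots,g(x_M)$ would spread over an interval of length $\geq M\delta$; but they all lie within $\ep$ of the interval $[\,f(0),f(M/n)\,]$, whose length is $\mathrm{arcsinh}(M/n)=O(\log M)$. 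Linear beats logarithmic, contradiction. So \emph{no} map uniformly $\ep$-close to $f$ lies in \emph{any} $\mathcal A_n$; density fails, and no ``compactness or equicontinuity argument'' can restore it, because the statement you would need is false. (This is why the classical Baire-category proof of the shrinking criterion is confined to compact $X$, where small point-preimages do imply the uniform estimate.) A second, independent problem is at the very end: even if Baire produced homeomorphisms arbitrarily close to $\pi$ in the fixed sup-$\varrho$ metric, that is weaker than being a near-homeomorphism, which demands $\mathcal W$-closeness for \emph{every} open cover $\mathcal W$ of $X_{\mathcal D}$; on noncompact spaces there are covers refined by no uniform $\ep$.

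The paper avoids both difficulties by an entirely different mechanism in which all smallness is measured against covers rather than a metric gauge: it inductively constructs $\mathcal D$-saturated open covers $\mathcal U_0,\mathcal U_1,\dots$ and shrinking homeomorphisms $h_0,h_1,\dots$ subject to star-refinement compatibility conditions (each $\mathcal U_{n+1}$ star-refines a star-refinement $\mathcal U_n'$ of $\mathcal U_n$, and $h_n(U')\cup h_{n+1}(U')\subset h_n(U)$ for suitable $U\in\mathcal U_n$), which make the $h_n$ locally uniformly Cauchy. Completeness yields a limit map $\chi\co X\to X$, and the cover conditions --- not any global $\delta$ --- force the fibers of $\chi$ to be exactly the elements of $\mathcal D$, force $\chi$ to be closed and surjective, and make $\pi\circ\chi^{-1}$ a homeomorphism $\mathcal W$-close to $\pi$ for the arbitrary prescribed cover $\mathcal W$ (closeness is arranged at stage $0$ by choosing $\mathcal U_0$ so that closures of its elements refine $\pi^{-1}(\mathcal W)$). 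Your direction (1)~$\Rightarrow$~(2) (two applications of the near-homeomorphism hypothesis against star-refinements, then $h_1^{-1}\circ h_2$) does match the paper, and your sketch of (2)~$\Leftrightarrow$~(3) is in the right spirit --- the paper folds (3) into the same induction via a weakened diameter condition (2)(b$'$) on an exhaustion by saturated compacta --- but without a correct proof of (2)~$\Rightarrow$~(1) the proposal does not establish the theorem.
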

\begin{proof}
%At first, we prove that (1) and (2) are equivalent.
{\textbf{Near-homeomorphism (1) implies shrinking (2) and (3).}}
Of course (2) implies (3) so we are going to prove only that 
(1) implies (2).
At first, suppose that 
the decomposition map $\pi \co X \to X_{\mathcal D}$
is a near-homeomorphism. We have to show that $\mathcal D$ is shrinkable by finding 
an appropriate homeomorphism $h$.
We know that since $X$ is metric, the decomposition space $X_{\mathcal D}$ is metrizable hence it is 
paracompact. 
(To show that  $\mathcal D$ is shrinkable, we will  use only  that the space $X$ is paracompact
and $T_4$.)
Let $\mathcal V$ be an open cover  and let 
$\mathcal U$ be a 
$\mathcal D$-saturated open cover of $X$.
Take the open covering $\{ \pi ( U) : U \in \mathcal U \}$ of $X_{\mathcal D}$.
Since $X_{\mathcal D}$ is paracompact, this covering has a star-refinement 
$\mathcal W_0$, i.e.\ $\mathcal W_0$ is a covering 
and the collection of stars of elements of $\mathcal W_0$, that is
the collection  
$$\left\{ St(W_{\al}, \mathcal W_0) : W_{\al} \in \mathcal W_0 \right\}$$
is a refinement of $\{ \pi ( U) : U \in \mathcal U \}$, see \cite[Section~8.3]{Du66}.
Similarly $\mathcal W_0$ has a star-refinement covering $\mathcal W_1$.
Then there is a homeomorphism 
$$
h_1 \co X \to X_{\mathcal D}
$$
which is $\mathcal W_1$-close to $\pi$ because $\pi$ is a near-homeomorphism.
Take the open cover 
$$
\mathcal W_1 \bigcap h_1(\mathcal V) = \{ W \cap h_1(V) : W \in \mathcal W_1, V \in \mathcal V \}
$$
and a star-refinement $\mathcal W_2$ of it. Of course 
$\mathcal W_2$ is a star-refinement of
$\mathcal W_1$ and $h_1(\mathcal V)$ as well.
There is a homeomorphism 
$$
h_2 \co X \to X_{\mathcal D}
$$
which is  $\mathcal W_2$-close to $\pi$.
Let $h \co X \to X$ be the composition
$$
h_1^{-1} \circ h_2.
$$
At first we show that 
$h$ shrinks every decomposition element $D \in \mathcal D$ 
into some $V \in \mathcal V$.
Let $D \in \mathcal D$. %Since $h_1 \circ h = h_2$ and $h_1^{-1} \circ h_1 = {\mathrm {id}}_X$,
It is enough to show that 
$h_2 (D) \subset h_1(V)$ for some $V \in \mathcal V$.
We have that for every $x \in D$ the points 
$\pi(D)$ and $h_2(x)$ are in the same  $W_x \in \mathcal W_2$
so 
$$
h_2(D) \subset St( \{ \pi(D) \}, \mathcal W_2 )  \subset h_1(V)
$$
for some $V \in \mathcal V$ because $\mathcal W_2$ is a star-refinement of
 $h_1(\mathcal V)$.
Now we show that $h$ is $\mathcal U$-close to the identity.
 We  have that for every $x \in D$ the points 
$\pi(D)$ and $h_1(x)$ are in the same  $W_x \in \mathcal W_1$ because $h_1$ is $\mathcal W_1$-close to $\pi$ so
$$
h_1(D) \subset St( \{ \pi(D) \}, \mathcal W_1 ).$$
Since $\mathcal W_2$ is a refinement of  $\mathcal W_1$,
we have 
 $$
 h_2(D) \subset St( \{ \pi(D) \}, \mathcal W_2 ) \subset St( \{ \pi(D) \}, \mathcal W_1 ).$$
These imply that 
$$
h_1(D) \cup h_2(D) \subset St( \{ \pi(D) \}, \mathcal W_1 ) \subset W_0
$$
for some 
$W_0 \in \mathcal W_0$ because $\mathcal W_1$ is a star-refinement of
 $\mathcal W_0$.
Hence for every $D \in \mathcal D$ we have
$$
D \cup h(D) = h_1^{-1} \circ h_1 ( D \cup h(D) )  = 
h_1^{-1} ( h_1 ( D ) \cup h_2 ( D ) )  \subset h_1^{-1} ( W_0 )
$$
so if we show that $$h_1^{-1} ( W_0 ) \subset U$$ for some $U \in \mathcal U$, then we 
prove the statement.
Since $h_1$ and $\pi$ are $\mathcal W_1$-close, they are 
$\mathcal W_0$-close as well.
This means that
for every $x \in X$ 
the points $\pi(x)$ and $h_1(x)$ are in the same $W_x \in \mathcal W_0$. % because $h_1$ is $\mathcal W_0$-close to $\pi$.
So
if $x \in h^{-1}_1 ( W_0 )$, then
$$
\pi(x) \in St( W_0, \mathcal W_0 ),$$
which gives that 
$$
\pi( h^{-1}_1 ( W_0 ) ) \subset  St( W_0, \mathcal W_0 ) \subset \pi(U)
$$
for some $U \in \mathcal U$
because
$W_0$ is a star-refinement of $\pi(\mathcal U)$.
Then the statement follows because 
$$
 h^{-1}_1 ( W_0 ) \subset \pi^{-1} \circ \pi  (h^{-1}_1 ( W_0 )  ) \subset \pi^{-1} \circ \pi (U) = U.
$$

{\textbf{Shrinking (2) or (3)  implies near-homeomorphism (1).}}
At first observe that in the case of (3) 
 if $X$ is locally compact and separable, then $X$ is $\sigma$-compact so $X$ is the
  union $\cup_{n=1}^{\infty} C_n$ of countably many compact sets 
  $$C_1 \subset C_2 \subset \cdots \subset C_n \subset \cdots.$$
  We also suppose that 
  every $C_n$ is $\mathcal D$-saturated and has non-empty interior. 
 Let $\mathcal W$ be an arbitrary open cover
of $X_{\mathcal D}$.
We have to construct a homeomorphism $h \co X \to X_{\mathcal D}$
which is $\mathcal W$-close to $\pi$.
 At first, we construct a sequence 
$$
\mathcal U_0, \mathcal U_1, \ldots, \mathcal U_n, \ldots
$$
of $\mathcal D$-saturated open covers of $X$
and a sequence
$$
h_0, h_1, \ldots, h_n, \ldots
$$
of self-homeomorphisms of $X$
with some useful properties.
Let $\mathcal U_0$ be 
a $\mathcal D$-saturated open cover of $X$ such that
the collection of the closures of the elements of $\mathcal U_0$ refines the open cover $\pi^{-1}(\mathcal W)$.
This obviously exists because
$X_{\mathcal D}$ is regular so around every point of $X_{\mathcal D}$
there is a small closed nbhd contained in some element of $\mathcal W$.
Let $h_0$ be the identity homeomorphism.
Let $\ep_n > 0$ be a decreasing sequence converging to $0$.
Define $\ep_0$ to be $\infty$. Denote the metric on $X$ by $d$.
Suppose inductively that 
we constructed already the covers 
$
\mathcal U_0, \ldots, \mathcal U_n
$
and the homeomorphisms
$
h_0,  \ldots, h_n
$
with the following properties:
\begin{enumerate}\label{egyesketto}
\item
\begin{enumerate}
\item
$\mathcal U_{i+1}$ is a $\mathcal D$-saturated open cover, which refines $\mathcal U_{i}$ for $0 \leq i \leq n-1$,
\item
for all $0 \leq i \leq n$
the set $\mathcal U_i$ refines the collection of $\ep_i$-nbhds of the elements of $\mathcal D$
and also refines the collection $\{ \pi^{-1}(B_{\ep_i}(y)) : y \in X_{\mathcal D} \}$, where
$B_{\ep_i}(y)$ is the open ball of radius $\ep_i$ around $y$,
\end{enumerate}
\item
\begin{enumerate}
\item
for every $0 \leq i \leq n-1$ 
every $D \in \mathcal D$ has a nbhd $U \in \mathcal U_i$ such that 
for every $U' \in \mathcal U_{i+1}$ which contains
$D$ we have
$$
h_i(U' ) \cup h_{i+1}(U') \subset h_i(U),$$
\item[(b)]
for every $0 \leq i \leq n$ 
the diameter of each $h_i(U)$,  $U \in \mathcal U_i$, is smaller than $\ep_i$,
\item[(b$'$)]
in the case of $X$ is $\sigma$-compact we require only that 
for every $0 \leq i \leq n$ and for  every nbhd $U \in \mathcal U_i$ such that $U\cap C_i \neq \emptyset$
the diameter of each $h_i(U)$ is smaller than $\ep_i$.
\end{enumerate}
\end{enumerate}
There will be some important corollaries of these constructions. 
Part (a) of (2) implies that 
 every $D \in \mathcal D$ has a nbhd $U \in \mathcal U_{i}$
such that  
for every $k \geq 1$ and $U' \in \mathcal U_{i+k}$ which contains
$D$ we have
\begin{equation}\label{tart}
h_{i+k}(U') \subset h_i(U).
\end{equation}
For $k=1$ this is immediate from (2)(a) and for $k \geq 2$ this follows by a simple induction.
This means that 
once we have $U_n$ and $h_n$ for every $n \in \N$ satisfying
(1) and (2), the
sequence $h_n$ is a Cauchy sequence in the sense of  
%the possibly infinite-valued 
%metric $\sup\{  d( f(x), g(x) ) : x \in X \}$ 
local uniform convergence 
in the space of maps of $X$ into $X$.
Indeed, if $x \in X$, then 
for some $D \in \mathcal D$ we have $x \in D$ and then 
$D$ has a nbhd $U \in \mathcal U_{n}$ for every $n$ 
such that  by applying (\ref{tart})
for all $k \in \N$ 
\begin{equation}\label{tart2}
h_{n+k}(D) \subset h_n(U),
\end{equation}
which means that 
$d( h_n(x), h_{n+k}(x) ) < \ep_n$ for all $x \in X$ 
by (2)(b).
 In the case of $X$ is $\sigma$-compact we have that
 for some $m \in \N$  the intersection $D \cap C_n \neq \emptyset$ for $n \geq m$
 hence by (2)(b$'$)  we have  
  $\mathrm{diam}\thinspace h_n(U) < \ep_n$
  for every $n \geq m$ and nbhd $U \in \mathcal U_{n}$ of $D$.
 This implies that for all  $n \geq m$ we get  
 $d( h_n(x), h_{n+k}(x) ) < \ep_n$ for all $k$ and $x \in  D$, where $D \subset C_n$. 
Since $(X, d)$ is complete, the sequence
$h_n$ converges locally uniformly to a continuous map $$\chi \co X \to X,$$ which will be a good candidate
for obtaining our desired  near-homeomorphism.

{\textbf{Defining  $\mathcal U_{n+1}$ and $h_{n+1}$.}}
So let us return to the definition of the covers $\mathcal U_n$ and homeomorphisms $h_n$.
Suppose inductively that 
we constructed already the covers 
$
\mathcal U_0, \ldots, \mathcal U_n
$
and the homeomorphisms
$
h_0,  \ldots, h_n
$
with the  properties (1) and (2). We are going  to define $\mathcal U_{n+1}$ and $h_{n+1}$.
The metrizable space $X_{\mathcal D}$ is paracompact
so the open cover $\pi ( \mathcal U_n )$
has a star-refinement whose $\pi$-preimage $\mathcal U_n'$ is a $\mathcal D$-saturated open cover of $X$, which
star-refines $\mathcal U_n$. 
Let $\mathcal V$ be an open cover of $X$ such that the diameter of each of its elements is smaller than $\ep_{n+1}$.
Then we have two possibilities.
\begin{itemize}
\item
If $\mathcal D$ is shrinkable, then 
there is a self-homeomorphism $H$ of $X$, which is
$h_n(  \mathcal U_n' )$-close to the identity and 
shrinks the elements of 
$h_n(\mathcal D)$ into the sets of $\mathcal V$. Let 
$$
h_{n+1} = H \circ h_n.$$
Clearly the diameter of each
$h_{n+1} (D)$, where $D \in \mathcal D$, is smaller than $\ep_{n+1}$.
\item
If only those elements of $\mathcal D$ are shrinkable which are in  a chosen compact set
as we suppose in (3) of the statement of Theorem~\ref{shrinkinglocallycompact}, then
there is a homeomorphism
$H_0 \co X \to X$ such that
the elements of $\mathcal D$ in the compact set $C_{n+1}$
are mapped by $H_0$ into some element of $h_n^{-1}(\mathcal V)$
and $H_0$ is $\mathcal U_n'$-close to the identity.
This implies that $h_n \circ H_0 \circ h_n^{-1}$ is such a self-homeomorphism of $X$ that 
maps the elements of $h_n(\mathcal D)$ which are in $h_n(C_{n+1})$ into the sets of $\mathcal V$ and
it is $h_n(  \mathcal U_n' )$-close to the identity. Denote $h_n \circ H_0 \circ h_n^{-1}$ by $H$.
Then 
let $$
h_{n+1} = H \circ h_n.$$
So $h_{n+1}$ maps
every $D \in \mathcal D$, $D \subset C_{n+1}$
into a set of diameter smaller than 
$\ep_{n+1}$.
\end{itemize}
%Then the decomposition $h_n(\mathcal D)$ is shrinkable so
%there is a self-homeomorphism $H$ of $X$, which is
%$h_n(  \mathcal U_n' )$-close to the identity and 
%shrinks the elements of 
%$h_n(\mathcal D)$ into the sets of $\mathcal V$. Let 
%$$
%h_{n+1} = H \circ h_n.$$
%Clearly the diameter of each
%$h_{n+1} (D)$, where $D \in \mathcal D$, is smaller than $\ep_{n+1}$.
The definition of $\mathcal U_{n+1}$ is a little more complicated.
For every $U_n' \in \mathcal U_n'$
$$
h_{n+1}(U_n') \subset h_n(St(U_n', \mathcal U_n' ))$$
because
$$
h_{n+1}(U_n') = H \circ h_{n}(U_n') \subset St( h_n (U_n'), h_n ( \mathcal U_n' ) )
$$
since $H$ is $h_n(  \mathcal U_n' )$-close to the identity
and also
$$
St( h_n (U_n'), h_n ( \mathcal U_n' ) )
= h_n(St(U_n', \mathcal U_n' )).$$
The covering $\mathcal U_n'$ star-refines $\mathcal U_n$
so for every $U_n' \in \mathcal U_n'$ there is an $U_n \in \mathcal U_n$ such that
$$
h_n(St(U_n', \mathcal U_n' )) \subset h_n(U_n),$$
which obviously implies that 
for every $U_n' \in \mathcal U_n'$ there is an $U_n \in \mathcal U_n$ such that
$$
h_n (U_n') \cup  h_{n+1}(U_n') \subset h_n(St(U_n', \mathcal U_n' )) \subset h_n(U_n).$$
Let $\mathcal S$ be a $\mathcal D$-saturated  open cover of $X$
with the following properties:
\begin{enumerate}[\rm (i)]
\item
the elements of $\mathcal S$ are nbhds of the elements of $\mathcal D$
such  that the diameter of each 
$h_{n+1} (S)$, where $S \in \mathcal S$, is smaller than $\ep_{n+1}$ (in the case of $S \cap C_{n+1} \neq \emptyset$ 
if  $X$ is $\sigma$-compact),
\item
$\mathcal S$ refines the collection of $\ep_{n+1}$-nbhds of the elements of $\mathcal D$,
\item
$\mathcal S$ also refines the $\mathcal D$-saturated  coverings
\begin{enumerate}
\item
$\mathcal U_n'$ and
\item
the collection $\{ \pi^{-1}(B_{\ep_{n+1}}(y)) : y \in X_{\mathcal D} \}$,
\end{enumerate}
%\item
%the diameter of each 
%$h_{n+1} (S)$, where $S \in \mathcal S$, is smaller than $\ep_{n+1}$ and
\item
for every $S \in \mathcal S$ there is a $U_n \in \mathcal U_n$ such that
$$
h_n (S) \cup  h_{n+1}(S)  \subset h_n(U_n).$$
\end{enumerate}
Let $\mathcal U_{n+1}$ be the $\pi$-preimage of an open cover of $X_{\mathcal D}$ which 
star-refines the open cover $\pi(\mathcal S)$. It follows that 
$\mathcal U_{n+1}$  star-refines $\mathcal S$. 
After we defined $\mathcal U_{n+1}$ and $h_{n+1}$
let us check if 
$
\mathcal U_0, \ldots, \mathcal U_{n+1}
$
and 
$
h_0,  \ldots, h_{n+1}
$
satisfy the conditions (1) and (2) on page~\pageref{egyesketto}. 
The cover $\mathcal U_{n+1}$ refines the cover $\mathcal U_{n}$
because $\mathcal U_{n}'$ refines $\mathcal U_{n}$, $\mathcal S$ refines  $\mathcal U_{n}'$ by 
(iii)(a) and $\mathcal U_{n+1}$ refines $\mathcal S$. So (1)(a) holds.
Also (1)(b) holds because of (ii) and (iii)(b).
To prove (2)(a) observe that 
for every $D \in \mathcal D$ the set $St(D, \mathcal U_{n+1})$ 
is a subset of $St(U, \mathcal U_{n+1})$ for some 
$U \in \mathcal U_{n+1}$. Then  $St(D, \mathcal U_{n+1}) \subset S$ for some
$S \in \mathcal S$ since $\mathcal U_{n+1}$  star-refines $\mathcal S$.
By (iv) there exists 
a $U \in \mathcal U_n$ such that
$$
h_n (S) \cup  h_{n+1}(S)  \subset h_n(U)$$
so
$$
h_n (St(D, \mathcal U_{n+1}) ) \cup  h_{n+1}( St(D, \mathcal U_{n+1}) )  \subset h_n(U).$$
But every $U' \in \mathcal U_{n+1}$
which contains 
$D$ is in $St(D, \mathcal U_{n+1})$ so we have
$$
h_n (U') \cup  h_{n+1}(U')  \subset h_n(U),$$
which proves (2)(a).
Finally,
the diameter of each $h_{n+1}(U)$,  $U \in \mathcal U_{n+1}$, is smaller than $\ep_{n+1}$ 
(if $U \cap C_{n+1} \neq \emptyset$ 
in the case of $\sigma$-compact $X$),
because
$\mathcal U_{n+1}$ refines $\mathcal S$ and we can apply (i).

{\textbf{Constructing the near-homeomorphism.}}
After having these infinitely many $\mathcal D$-saturated open coverings
$$
\mathcal U_0, \mathcal U_{1}, \ldots
$$
and homeomorphisms
$$
h_0, h_{1}, \ldots
$$
take the map 
$$
\chi \co X \to X$$
that we obtained applying (\ref{tart2}) and defined to be the pointwise limit of the sequence $h_n$.  
%We want to show that $\chi$ is a homeomorphism.
At first, we show that $\chi$ is surjective.
Let $x \in X$ and $x_n = h_{n+1}^{-1}(x)$.
Let $D_n \in \mathcal D$ be such that $x_n \in D_n$, then by (2)(a) we get a nbhd $U_n \in \mathcal U_n$ of $D_n$
such that 
for every $U' \in \mathcal U_{n+1}$ containing $D_n$ we have
$$
h_n(U' ) \cup h_{n+1}(U') \subset h_n(U_n).$$
In this way we get a decreasing sequence 
$$
U_1 \supset U_2 \supset \cdots \supset U_n \supset \cdots
$$
because of the following.
It is enough to show that $D_n \subset U_{n+1}$ as well, then
by (2)(a) we obtain $h_n(U_{n+1}) \subset h_n(U_n)$ so $U_{n+1} \subset U_n$.
But $D_n \subset U_{n+1}$ because
$$
h_{n+2}(V) \subset h_{n+1}(U_{n+1})
$$
by (2)(a) for every $V \in \mathcal U_{n+2}$
containing $D_{n+1}$, so we also have 
$$
h_{n+2} (x_{n+1}) \in h_{n+2} (D_{n+1})  \subset h_{n+2} ( V),$$
which
implies 
$$
x \in h_{n+1} ( U_{n+1} )
$$
hence
$$
h_{n+1}(x_n) \in h_{n+1}  ( U_{n+1} )\mbox{\ \ \ \ and so\ \ \ \ }x_n \in U_{n+1}$$
but
$U_{n+1}$ is $\mathcal D$-saturated
hence also $D_n \subset U_{n+1}$.

The sequence $(x_n)$  has a   Cauchy (hence convergent) subsequence:
since $x_ n \in U_n$,
for all $k \geq 0$
$$
x_{n+k} \in U_n
$$
and
for every $\ep > 0$ there is $\ep_n < \ep$
such that $U_n$
 is in the $\ep$-nbhd of some $D \in \mathcal D$ by (1)(b). 
Since the metric space $X$ is complete, there is an $x_0 \in X$ such that a subsequence $(x_{n_k})$ of $(x_n)$
converges to $x_0$.

All of these imply that because of the definition of $\chi$ and the locally uniform convergence of $h_n$ we have 
$$
\chi(x_0)  = \lim_{k \to \infty} h_{n_k + 1}( x_{n_k}) = \lim_{k \to \infty} x = x. 
$$
This means $\chi$ is surjective.

It will turn out that $\chi$ is not injective so it is not a homeomorphism. However,  
the composition $\pi \circ \chi^{-1}$ of the relation $\chi^{-1}$ and the decomposition map $\pi$ is a homeomorphism.
To see this, we show that 
the sets $\chi^{-1}(x)$, where $x \in X$, are exactly the decomposition elements of $\mathcal D$.
By  (\ref{tart2})
for every $n \in \N$ and $D \in \mathcal D$ there is a nbhd $U \in \mathcal U_n$ of $D$ 
such that for every $k \geq 0$
$$
h_{n+k} ( D) \subset h_n(U)$$
hence
$$
\chi ( D ) =  \lim_{k \to \infty} h_{n+k}( D ) \subset \cl h_n(U).
$$
It is a fact that 
${\mathrm {diam}}\thinspace \cl A = {\mathrm {diam}}\thinspace A$
 for an arbitrary subset $A$ of a metric space so by (2)(b)  we obtain 
 $\chi(D) < \ep_n$ for each $n$
 and by (2)(b$'$) for some $C_{m} \supset D$
 we obtain 
 $\chi(D) < \ep_n$ for each $n \geq m$, which implies that 
 $\chi(D)$ is a point. To show that 
 the $\chi$-preimage of a point is not bigger than a decomposition element, 
 observe that for different elements $D_1$ and $D_2$ 
and for large enough $n$ by (1)(b) there are $U, V \in \mathcal U_n$ which lie
in the small $\ep_n$-nbhds of
$D_1$ and $D_2$, respectively, 
hence $U$ and $V$ are disjoint.
Then similarly to above, 
$$
\chi ( D_1 ) \subset \cl h_n(U)\mbox{\ \ \ \ and\ \ \ \ }\chi ( D_2 ) \subset \cl h_n(V),
$$
which implies that $\chi ( D_1 )$ and $\chi ( D_2 )$ are different so 
the sets $\chi^{-1}(x)$, where $x \in X$, are exactly the decomposition elements of $\mathcal D$.

This means that $\pi \circ \chi^{-1}$ is a bijection. Its inverse is continuous because 
$\chi$ is continuous and $\pi$ is a closed map since the decomposition is usc.
To prove that $\pi \circ \chi^{-1}$ is continuous 
it is enough to show that 
$\chi$ is a closed map.
Let $A \subset X$ be a closed set and
observe that 
a point $y \in X$ is in $X - \chi (A)$ if and only if
$\chi^{-1}(y) \cap \chi^{-1} ( \chi ( A ) ) = \emptyset$,
which holds exactly if $\chi^{-1}(y) \cap \pi^{-1} ( \pi ( A )) = \emptyset$.
This means that in order to show that $\chi (A)$ is closed
it is enough to prove that 
for any decomposition element $D$ such that 
$D \cap \pi^{-1} ( \pi ( A )) = \emptyset$
the point $\chi (D)$ is an inner point of $X - \chi (A)$.
If $\ep_n$ is small enough, then
since $D \cap \pi^{-1} ( \pi ( A )) = \emptyset$,
by (1)(b)
for every $U_n \in \mathcal U_n$ containing $D$ we have
$$
\cl U_n \cap \cl St(A, \mathcal U_n) = \emptyset.$$
By (\ref{tart2}) we have 
$\chi ( D ) \in h_n ( \cl U_n )$
and
obviously
$$
\chi ( A ) \subset  h_n ( \cl  St(A, \mathcal U_n) ) = \cl h_n (  St(A, \mathcal U_n) ) 
$$
so finally we get
$$
\chi (D) \in X - \cl h_n (  St(A, \mathcal U_n) ) \subset X - \chi ( A )
$$
implying that
$\chi (D)$ is an inner point of $X - \chi (A)$.
As a consequence the map 
$\pi \circ \chi^{-1}$
is a homeomorphism. We have to prove that 
it is $\mathcal W$-close to the identity.
By (\ref{tart}) for every $D$ and for all $n$ 
there exist $U_n \in \mathcal U_n$ nbhds of $D$
such that 
$$
U_0 = h_0 ( U_0 ) \supset h_1 (U_1) \supset \cdots \supset h_n (U_n) \supset \cdots.
$$
So $h_n(D) \in U_0$ for every $n$ and
then 
$\chi(D) \in \cl U_0$.
Since the collection of the closures of the elements in $\mathcal U_0$ refines the cover $\pi^{-1}(\mathcal W)$,
both of $D$ and $\chi(D)$ are in the same $\pi^{-1}(W) \in \mathcal W$.
This implies that if we denote $\chi(D)$ by $x$, then 
both of $\chi^{-1} ( x )$ and $x$ are in  $\pi^{-1}(W)$.
As a result
$\chi ( \chi^{-1} ( x )) = x$ and $\chi(x)$ are in $\chi (\pi^{-1}(W))$
so by applying the map $\pi \circ \chi^{-1}$ we get that 
$$
\pi \circ \chi^{-1} (x)\mbox{\ \ \ \ and\ \ \ \ }\pi \circ \chi^{-1} ( \chi(x) ) = \pi(x)$$
are in $W$. 
This shows that 
$\pi \circ \chi^{-1}$ is $\mathcal W$-close to $\pi$.
\end{proof}

%An often applied corollary is the following theorem, we omit the proof here.
%\begin{thm}\label{shrinkinglocallycompact}
%Let $X$ be a locally compact and separable metric space and
%let $\mathcal D$ be a usc decomposition of $X$.
%The following are equivalent:
%\begin{enumerate}[\rm (1)]
%\item
%$\mathcal D$ is shrinkable,
%\item
%$\pi$ is a near-homeomorphism,
%\item
%if $C \subset X$ is compact, $\ep > 0$ and $\mathcal U$ is
%a $\mathcal D$-saturated open cover of $X$,
%then there is a homeomorphism $h \co X \to X$ such that 
%${\mathrm{diam}}\thinspace h(D) < \ep$ for every $D \subset C$, $D \in  \mathcal D$
%and $h$ is $\mathcal U$-close to the identity.
%\end{enumerate}
%\end{thm}

The goal of most of the applications of shrinking 
is
to obtain some kind of embedding of a manifold by the process of approximating a given map.
Let $\R^d_+$ denote the closed halfspace in $\R^d$.

\begin{defn}[Flat subspace and  locally flat embedding]
Let $A \subset X$ be a chosen subspace of a topological space $X$.
We say that the subspace $B \subset X$ homeomorphic to $A$ 
is \emph{flat} if
there is a homeomorphism $h \co X \to X$
such that $h( B ) = A$.
Let $X$ be an $n$-dimensional manifold. An embedding $e \co B \to X$ of a  $d$-dimensional manifold $B$
is \emph{locally flat} if every  point $e(b)$ has a nbhd $U$  in $X$
such that 
the pair 
$$(U, e(B) \cap U)\mbox{\ is homeomorphic to\ }
\left\{
\begin{array}{ccc}
 (\R^n, \R^d) & \mbox{if $b$ is an inner point of $B$}  \\
 (\R^n, \R^d_+) &   \mbox{if $b$ is a boundary point of $B$}. 
\end{array}
\right.
$$
\end{defn}

\begin{defn}[Collared and bicollared subspaces]
The subspace $A \subset X$ is 
\emph{collared} if there is an embedding $f \co A \x [0,1) \to X$ onto an open subspace of $X$
such that $f(a, 0) = a$.
The subspace $A \subset X$ is 
\emph{bicollared} if there is an embedding $f \co A \x (-1,1) \to X$
such that $f(a, 0) = a$.
The subspace $A \subset X$
is \emph{locally collared} (or  \emph{locally bicollared}) if
every $a \in A$ has a nbhd $U$ in $X$ such that $A \cap U$ is
collared (resp.\ bicollared).
\end{defn}

A typical application of  shrinking is the following.
\begin{thm}
Let $X$ be an $n$-dimensional manifold with boundary $\del  X$.
Then $\del X$ is collared in $X$.
\end{thm}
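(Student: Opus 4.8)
The plan is to reduce the statement to the \emph{local} collarability of $\del X$, which is immediate from the manifold structure, and then to globalize the local collars by a controlled limiting construction patterned exactly on the proof of Theorem~\ref{shrinkinglocallycompact}. First I would observe that $\del X$ is locally collared: each boundary point $b$ has a chart $U$ with $(U, U \cap \del X)$ homeomorphic to $(\R^{n}_{+}, \R^{n-1})$, and inside $\R^{n}_{+}$ the subspace $\R^{n-1}$ is collared by the obvious embedding $(x,t) \mapsto (x,t)$ of $\R^{n-1} \x [0,1)$; transporting this chart back produces a collar $c_U \co (U \cap \del X) \x [0,1) \to X$ onto an open set. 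Thus $\del X$ is covered by open sets $V_\alpha \subseteq \del X$, each carrying a collar $c_\alpha$. Since $X$ is metrizable it is paracompact and normal; treating the components of $\del X$ one at a time---their collar images may be taken disjoint, as distinct components are disjoint closed sets---I may assume $\del X$ is connected, hence second countable, and so replace the cover by a countable locally finite one $\{V_i\}_{i \in \N}$ with collars $c_i$.

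The crux is a \textbf{combining step}: given a collar $e$ over an open set $A \subseteq \del X$ together with the collar $c_i$ over $V_i$, produce a single collar over $A \cup V_i$. Here I would use a self-homeomorphism $h_i$ of $X$ that fixes $\del X$ pointwise and is supported in a small neighborhood of the two collar images, chosen so that $h_i \circ c_i$ agrees with $e$ over the overlap $A \cap V_i$; the union of $e$ with the realigned chart is then an embedding of $(A \cup V_i) \x [0,1)$ onto an open set. This realigning $h_i$ is precisely the kind of controlled self-homeomorphism that drives the shrinking theorem, the only structural difference being that it must preserve $\del X$, as every self-homeomorphism of a manifold does.

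To reach all of $\del X$ I would iterate. Setting $A_n = V_1 \cup \cdots \cup V_n$, I build a collar $e_n$ over $A_n$ by combining $e_{n-1}$ with $c_n$ through a realigning homeomorphism $h_n$, arranging each $h_n$ to move points by less than a prescribed $\varepsilon_n \to 0$ and to be supported ever closer to $\del X$. Local finiteness of $\{V_i\}$ guarantees that near any given point only finitely many $h_n$ act nontrivially, so the compositions $h_n \circ \cdots \circ h_1$ form a Cauchy sequence for local uniform convergence; by completeness of the metric on $X$ they converge to a limit $\chi$, exactly as the homeomorphisms $h_n$ converge to the near-homeomorphism $\chi$ in the proof of Theorem~\ref{shrinkinglocallycompact}. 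Pushing a single model collar through $\chi$ then yields a collar defined over $\bigcup_i V_i = \del X$.

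The hard part will be showing that this limit collar is a genuine \emph{embedding} onto an open neighborhood of $\del X$, and not merely a convergent continuous map---the same obstacle one meets in proving that the map $\chi$ of Theorem~\ref{shrinkinglocallycompact} is a homeomorphism rather than just a surjection. I would control it by keeping each realignment $h_i$ small enough that distinct collar fibres never collide, and by checking, using local finiteness, that injectivity and openness of the limit are local conditions verified chart by chart, with the closedness of $\chi$ forcing the limit to be a homeomorphism onto its image. Completeness together with the normality and paracompactness of $X$ are exactly the hypotheses that make this limiting argument go through.
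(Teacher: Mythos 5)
Your route---local collars from charts, amalgamation over a locally finite cover, then a controlled limit---is Brown's original approach, and it is genuinely different from the paper's. The paper never matches two collars on an overlap at all: it attaches an external collar $\del X \x [0,1]$ to form $\tilde X$, takes the usc decomposition $\mathcal D$ of $\tilde X$ whose non-degenerate elements are the fibres $\{x\} \x [0,1]$, identifies $X \cong \tilde X_{\mathcal D}$, and verifies criterion (3) of Theorem~\ref{shrinkinglocallycompact} by squeezing the fibres toward $\del \tilde X$ over compact sets. Since the external collar is a global product, those squeezing homeomorphisms are assembled from finitely many chart-local moves at a time, and the shrinking theorem's limit does the globalization. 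That is exactly what the decomposition machinery buys: it replaces the amalgamation problem by a shrinking problem that has no alignment issue.

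The gap in your proposal is the combining step, and it is not a technicality---it is the hard core of this theorem, and as stated it fails twice. First, the realigning homeomorphism $h_i$ (fixing $\del X$, supported near the two collar images, with $h_i \circ c_i = e$ on all of $(A \cap V_i) \x [0,1)$) need not exist. Matching two partial collars over their full overlap is a uniqueness-of-collars statement at least as hard as the theorem itself, and in this form it is false: take $x_n \in A \cap V_i$ converging to a point $z$ in the frontier of $A \cap V_i$ with $z \in V_i - A$. Then $c_i(x_n, 1/2) \to c_i(z,1/2)$, so continuity of $h_i$ would force $e(x_n,1/2) = h_i(c_i(x_n,1/2))$ to converge; but $e$ is merely an embedding of $A \x [0,1)$, with no control whatsoever near the frontier of $A$, so $e(x_n,1/2)$ may fail to converge. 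Second, even granting agreement over the overlap, the union of $e$ and $h_i \circ c_i$ need not be injective: nothing prevents a fibre $e(\{x\} \x [0,1))$ with $x \in A - V_i$ from meeting a fibre $h_i(c_i(\{y\} \x [0,1)))$ with $y \in V_i - A$, since the two embeddings are only constrained over $A \cap V_i$. The classical repair is not ambient realignment but tapering: one works with collars of variable height $\{(x,t) : 0 \leq t < \la(x)\}$, where $\la$ vanishes near the relevant frontiers, and combines two such collars by an interpolation that pushes heights down where collisions could occur. Without that ingredient your induction never completes its first step; and as a side remark, once the supports are locally finite the limiting stage needs no Cauchy-sequence machinery at all, since the infinite composition is locally a finite composition of homeomorphisms.
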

\begin{proof}
Attach the manifold $\del X \x [0, 1]$ to $X$ along $\del X \subset X$ by the identification
$$\varphi \co \del X \x \{ 0 \} \to X,$$
$$\varphi (x, 0) = x.$$
In this way we get a manifold $\tilde X$, which contains
the attached $\del X \x [0, 1]$ as a subset. 
The boundary of $\tilde X$ is $\del X \x \{ 1 \}$ and so the
boundary $\del \tilde X$  is obviously collared.
Let $\mathcal D$ be the decomposition of $\tilde X$ into the intervals $\{ \{ x \} \x  [0,1] : x \in \del X \}$ 
and the singletons in $\tilde X - \del X \x [0, 1]$.
Then $X$ and the quotient space $\tilde X_{\mathcal D}$
are homeomorphic by the map
$$\alpha \co X \to \tilde X_{\mathcal D},$$
$$\alpha(x) = [x],$$
where $[x]$ denotes the equivalence class of $x$.
Indeed, $\alpha$ is a bijection mapping $X - \del X$
to the classes consisting of single points and
mapping the boundary points $x \in \del X$ to
the class $[x]$. It is easy to see that $\alpha$ and also $\alpha^{-1}$ are continuous so $\alpha$
is a homeomorphism.
If we prove that $\tilde X$ is also homeomorphic to the decomposition space $\tilde X_{\mathcal D}$ by a map $\beta$
as the diagram
\begin{center}
\begin{graph}(6,2)
\graphlinecolour{1}\grapharrowtype{2}
\textnode {X}(0.5,1.5){$X$}
\textnode {T}(5.5, 1.5){$\tilde X$}
\textnode {Q}(3, 0){$\tilde X_{\mathcal D}$}
\diredge {X}{Q}[\graphlinecolour{0}]
\diredge {T}{Q}[\graphlinecolour{0}]
\freetext (1.9,1){$\alpha$}
\freetext (4, 1){$\beta$}
%\freetext (4.8, 0.6){$\subseteq$}
\end{graph}
\end{center}
shows, 
 then 
we obtain that $X$ and $\tilde X$ are homeomorphic through the map $\beta^{-1} \circ \alpha$, 
 which finishes the proof.
A homeomorphism $\beta$ exists if
we prove that ${\mathcal D}$
is shrinkable because then $\pi \co \tilde X \to \tilde X_{\mathcal D}$ is a near-homeomorphism.
Let $\mathcal V$ be an arbitrary open cover of $\tilde X$ and
let $\mathcal U$ be a  $\mathcal D$-saturated open cover of $\tilde X$.
Let $\mathcal W$ be a refinement of $\mathcal V$
such that $\mathcal W$ contains all the small nbhds of the form 
$U_{x} \x [1, 1- \ep_{x})$ for all  ${(x,1)} \in \del \tilde X$
and for some appropriate $\ep_{x} > 0$ and relative nbhd $U_{x} \subset \del X$.
We also suppose that in $\mathcal W$
the nbhds of the inner points
of $\tilde X$ are
only these or such nbhds which do not intersect $\del \tilde X$.
We will apply Theorem~\ref{shrinkinglocallycompact}.
Let $C \subset \tilde X$ be a compact set and let $E \subset \tilde X$
be a compact set 
containing  the attached $\{x\} \x [0, 1]$ for all $(x, 1) \in \del \tilde X$
such that
$\{x\} \x [0, 1]$ intersects $C$.
Since $E$ is compact, there are finitely many nbhds in $\mathcal W$ and also in $\mathcal U$
which cover $E$. Let us restrict ourselves to these finitely many nbhds. 
Let $\ep>0$ be such that 
$\ep < \ep_x$ for all these finitely many points $(x, 1) \in \del \tilde X$.
Let $U$ be the union  of the chosen finitely many nbhds in $\mathcal U$
and let $\de > 0$ be such that 
for a metric on $\tilde X$ the $\delta$-nbhd of
$$\bigcup_{(x,1) \in \del \tilde X \cap E} \{x\} \x [0, 1]$$
is inside $U$. 
Then define a homeomorphism 
$h \co \tilde X \to \tilde X$ 
which maps
$$\bigcup_{(x,1) \in \del \tilde X \cap E} \{x\} \x [0, 1]$$
into 
$$\bigcup_{(x,1) \in \del \tilde X \cap E} \{x\} \x [1, 1- \ep)$$ by 
mapping each arc $\{x\} \x [0, 1]$, where $(x, 1) \in \del \tilde X$, into itself.
 We suppose that 
 the support of 
 $h$ is inside the $\delta/2$-nbhd of $\bigcup_{(x,1) \in \del \tilde X \cap E} \{x\} \x [0, 1]$.
 This $h$ satisfies (3) of Theorem~\ref{shrinkinglocallycompact} so
$\pi$ is a near-homeomorphism which yields the claimed homeomorphism $\beta$.
\end{proof}

\bigskip{\Large{\section{Shrinkable decompositions}}}
\bigskip\large

The following notions are often used
to describe types of  decompositions which 
turn out to be shrinkable.

\begin{defn}
Let $\mathcal D$ be a usc decomposition of  $\R^n$.
\begin{itemize}
\item
$\mathcal D$ is  \emph{cell-like} if every decomposition element is cell-like,
\item
$\mathcal D$ is  \emph{cellular} if every decomposition element is cellular,
\item
the decomposition elements are \emph{flat arcs} if 
for every $D \in \mathcal D$ there is a homeomorphism $h \co \R^n \to \R^n$ such that 
$h(D)$ is a straight line segment,
\item
$\mathcal D$ is  \emph{starlike} if 
every decomposition element $D$
is a starlike set, that is, $D$ is a union of compact straight line segments 
with a common endpoint  $x_0 \in \R^n$,
\item
$\mathcal D$ is  \emph{starlike-equivalent} if 
for every $D \in \mathcal D$ there is a homeomorphism $h \co \R^n \to \R^n$ such that 
$h(D)$ is starlike,
\item
$\mathcal D$ is \emph{thin} 
if for every $D \in \mathcal D$
and every nbhd $U$ of $D$ there is an $n$-dimensional ball $B \subset  \R^n$ such that 
$D \subset B \subset U$ and $\del B$ is disjoint from the non-degenerate elements of $\mathcal D$,
\item
$\mathcal D$ is \emph{locally shrinkable} if
for each $D \in \mathcal D$ we have that for every
nbhd $U$ of $D$ and open cover $\mathcal V$ of $\R^n$
there is a
homeomorphism $h \co \R^n \to \R^n$ with support $U$ such that
$h(D) \subset V$ for some $V \in \mathcal V$,
\item
$\mathcal D$ 
\emph{inessentially spans}
the disjoint closed subsets $A,  B \subset \R^n$
if for every $\mathcal D$-saturated open cover $\mathcal U$ of $\R^n$
there is a homeomorphism $h \co \R^n \to \R^n$ which is 
$\mathcal U$-close to the identity and no element of $\mathcal D$ meets both of $h(A)$ and $h(B)$,
\item
the decomposition element $D$ has 
\emph{embedding dimension} $k$ if
for every $(n-k-1)$-dimensional
smooth submanifold $M$ of $\R^n$
and open cover $\mathcal V$ of $\R^n$
there is a homeomorphism $h \co \R^n \to \R^n$ which is 
$\mathcal V$-close to the identity,
$h (M) \cap D = \emptyset$ and
this is not true for $(n-k)$-dimensional submanifolds.
\end{itemize}
\end{defn}

Most of these notions have the corresponding verisons in arbitrary manifolds or spaces.
A condition that is obviously satisfied by at least $5$-dimensional Euclidean spaces  is the following.

\begin{defn}[Disjoint disks property]
The metric space $X$ 
has the \emph{disjoint disks property} if
for arbitrary maps $f_1$ and $f_2$ from $D^2$ to $X$ and for every $\ep > 0$
there are approximating maps $g_i$ from $D^2$ to $X$ 
$\ep$-close to $f_{i}$, $i = 1, 2$,
such that $g_1(D^2)$ and $g_2(D^2)$ are disjoint.
\end{defn}

The next theorem \cite{Ed78} is one of the fundamental results of decomposition theory, we omit its proof here.
\begin{thm}
Let $X$ be an at least $5$-dimensional manifold and let $\mathcal D$ be a cell-like decomposition of $X$.
Then  $\mathcal D$ is shrinkable if and only if $X_{\mathcal D}$ is finite dimensional and has the disjoint disks
property.
\end{thm}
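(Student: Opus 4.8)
The plan is to prove the two implications separately: the forward one (shrinkable implies finite dimensional plus disjoint disks) is essentially formal given what we have already established, while the reverse one is the genuine content of Edwards's theorem and is where all the difficulty lies. For the forward direction I would argue that if $\mathcal D$ is shrinkable, then by Theorem~\ref{shrinkinglocallycompact} the decomposition map $\pi \co X \to X_{\mathcal D}$ is a near-homeomorphism, so $X_{\mathcal D}$ is homeomorphic to the $n$-manifold $X$; in particular it is finite dimensional. Moreover, since $n \geq 5$, any two maps $f_1, f_2 \co D^2 \to X$ admit smooth approximations that can be put in general position, and because $2+2 < n$ such generic approximations have disjoint images. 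Hence $X_{\mathcal D}$ has the disjoint disks property, and the easy direction is complete.

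For the converse — that finite dimensionality together with the disjoint disks property forces shrinkability — I would first record the structural consequences of cell-likeness. Since every element of $\mathcal D$ is cell-like, the map $\pi$ is a cell-like map; by the Vietoris--Begle theorem it induces isomorphisms on \v{C}ech cohomology over small open sets, so local homology is preserved and $X_{\mathcal D}$ is a generalized $n$-manifold (a homology $n$-manifold). Being finite dimensional and the cell-like image of an ANR, $X_{\mathcal D}$ is itself an ANR; the finite-dimensionality hypothesis is essential here precisely because cell-like maps can otherwise raise dimension. The objective is then to verify Bing's shrinkability criterion directly: given an open cover $\mathcal V$ and a $\mathcal D$-saturated cover $\mathcal U$, produce a self-homeomorphism of $X$ that is $\mathcal U$-close to the identity and squeezes every decomposition element into some element of $\mathcal V$.

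I would construct that homeomorphism by an infinite inductive shrinking process, working inside small Euclidean charts of $X_{\mathcal D}$ and lifting through $\pi$, exactly in the spirit of the limiting argument used to build $\chi$ in the proof of Theorem~\ref{shrinkinglocallycompact}. At each stage one covers the image of the non-degenerate set by finitely many handles and tries to push the elements down these handles to small diameter. The obstruction to doing this cleanly is that the spanning $2$-disks of the handles may be pierced by the preimages of the singular (non-manifold) points of $X_{\mathcal D}$. This is exactly where the disjoint disks property enters: it lets one approximate the relevant pairs of spanning disks by disks with disjoint images and hence, after lifting, arrange the $2$-dimensional tracking data to miss the cell-like sets being shrunk, so that the handle moves do not recreate large elements. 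Iterating with a geometrically controlled sequence of covers and radii $\ep_n \to 0$, and invoking completeness of the metric, yields a Cauchy sequence of homeomorphisms whose limit is the desired shrinking homeomorphism.

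The main obstacle is precisely this disjoint-disks general-position step: converting the abstract hypothesis on $X_{\mathcal D}$ into concrete ambient isotopies of $X$ that separate the $2$-dimensional skeleta of the engulfing handles from the cell-like elements, uniformly enough to preserve the inductive size control across all stages. Everything else — the generalized-manifold and ANR structure coming from cell-likeness, and the assembly of the limit map — is comparatively formal. It is the engulfing heart, where the dimension restriction $n \geq 5$ and the disjoint disks property jointly do the real work, that makes the full argument long and technical, which is why it is reasonable to omit it and cite \cite{Ed78}.
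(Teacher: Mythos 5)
The paper does not prove this theorem at all: it states it as Edwards's cell-like approximation theorem and defers entirely to \cite{Ed78}, so there is no argument of the paper's to compare yours against. What you add beyond the paper is a correct proof of the forward implication: shrinkability plus Theorem~\ref{shrinkinglocallycompact} (which applies because a manifold admits a complete metric) gives that $\pi$ is a near-homeomorphism, hence $X_{\mathcal D}$ is homeomorphic to the manifold $X$, which is finite dimensional and, being of dimension $\geq 5$, has the disjoint disks property. One caveat there: a topological manifold of dimension $\geq 5$ need not carry a smooth structure, so you cannot literally invoke \emph{smooth} general position globally; you should instead work chart by chart (subdivide $D^2$ so each piece maps into a Euclidean chart and push images off each other there), which is the standard way to verify the disjoint disks property for topological manifolds. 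For the converse, your outline of the structural consequences of cell-likeness ($X_{\mathcal D}$ a finite-dimensional ANR and generalized $n$-manifold) and of the role of the disjoint disks property in the inductive shrinking is a reasonable roadmap, but as you yourself acknowledge it is not a proof: the engulfing/filtration machinery that converts the disjoint disks property into controlled shrinking homeomorphisms is precisely the content of \cite{Ed78}, and your proposal, like the paper, ultimately rests on that citation. So your write-up is an honest strengthening of the paper's treatment (it nails down the easy direction) rather than a genuinely different route, and the hard direction remains cited, not proved.
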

Recall that a separable metric space is finite dimensional if every point has arbitrarily small nbhds having one less dimensional frontiers  
%inductively 
and
dimension $-1$ is by definition the dimension of the empty set.
For example, a manifold is finite dimensional.

In the following statement we enumerate
several conditions which imply that a (usc) decomposition is shrinkable.

\begin{thm}\label{shrinking_thm}
The following decompositions are strongly shrinkable:
\begin{enumerate}[\rm (1)]
\item
cell-like usc decompositions of a $2$-dimensional manifold,
\item
countable usc decompositions of $\R^n$ if the decomposition elements are flat arcs,
\item
countable and starlike usc decompositions of $\R^n$,
\item
countable and starlike-equivalent usc decompositions of $\R^3$,
\item
null and starlike-equivalent usc decompositions of $\R^n$,
\item
thin usc decompositions of $3$-manifolds,
\item
countable and thin usc decompositions of $n$-dimensional manifolds,
\item
countable and locally shrinkable  usc decompositions of a complete metric space if 
$\cup \mathcal H_{\mathcal D}$ is $G_{\delta}$,
\item
monotone usc decompositions of $n$-dimensional manifolds
if $\mathcal D$ inessentially spans 
every pair of disjoint, bicollared  $(n-1)$-dimensional spheres,
\item
null and cell-like decompositions of smooth $n$-dimensional manifolds 
if the embedding dimension of every $D \in \mathcal D$ is $\leq n-3$,
\end{enumerate}
\end{thm}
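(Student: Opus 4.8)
The plan is to not attack the ten items one at a time from scratch, but to isolate a small number of ``master'' shrinking results and then reduce each case to one of them. By Theorem~\ref{shrinkinglocallycompact} it suffices, for a usc decomposition of a complete (and, where the locally compact separable form is used, $\sigma$-compact) space, to produce for every $\ep>0$, every compact $C$, and every $\mathcal D$-saturated open cover $\mathcal U$ a self-homeomorphism $h$ that is $\mathcal U$-close to the identity with $\mathrm{diam}\thinspace h(D)<\ep$ for the elements $D$ meeting $C$; for \emph{strong} shrinkability one asks in addition that the support of $h$ lie inside a prescribed open $W\supset\cup\mathcal H_{\mathcal D}$. This reduces every item to manufacturing one such $h$ with controlled support. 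The ten cases then split into a \emph{countable} family (2),(3),(4),(7),(8), governed by a one-element-at-a-time argument, and a \emph{simultaneous} family (5),(6),(9),(10), governed either by nullity or by a global geometric hypothesis, with (1) a classical special case.

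First I would prove item (8) as the backbone of the countable family. The idea is a Baire-category infinite repetition: local shrinkability lets me shrink a single non-degenerate element inside an arbitrarily small saturated neighborhood while fixing everything outside, so I enumerate $\mathcal H_{\mathcal D}=\{D_1,D_2,\dots\}$ and apply such local shrinks with supports chosen to shrink rapidly and to stay essentially clear of the elements already handled. Completeness of the metric and the $G_{\delta}$ hypothesis on $\cup\mathcal H_{\mathcal D}$ are exactly what force, via Baire category in the (complete) space of homeomorphisms, the infinite composition to converge to a genuine homeomorphism shrinking \emph{every} element rather than only finitely many. With (8) in hand, items (2),(3),(4) follow by verifying local shrinkability of their elements: a flat arc is pulled to a point inside a flat tube, a starlike element is contracted radially toward its common endpoint $x_0$, and a starlike-equivalent element is first carried by a homeomorphism to a genuinely starlike one. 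Item (7) is the same scheme with thinness supplying the local shrink, since the ball $B$ with $\del B$ disjoint from $\cup\mathcal H_{\mathcal D}$ isolates a single element to be squeezed inside $B$.

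For the simultaneous family the organizing observation is that a null decomposition has only finitely many elements of diameter exceeding any given $\ep$, so I may shrink those finitely many at once and leave the rest alone. In case (5) I would radially contract each of the finitely many large starlike-equivalent elements inside disjoint tubes, the disjointness coming from nullity once the tubes are thin enough, producing the single required $h$. Case (10) feeds the embedding-dimension hypothesis into general position: embedding dimension $\le n-3$ means $2$-disks can be pushed off the decomposition elements, which is precisely the input letting Edwards's disjoint-disks technology shrink a null cell-like family. Case (6) is Bing's original ball-in-ball shrink: thinness hands me a ball $B$ with $D\subset B\subset U$ and $\del B$ missing $\cup\mathcal H_{\mathcal D}$, and I re-embed $B$ into itself to squeeze $D$ small while keeping the boundary matched; carrying this out consistently throughout a $3$-manifold is the genuinely hard step. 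Case (9) uses inessential spanning directly: nested bicollared $(n-1)$-spheres are pushed so that each monotone element comes to lie on one side, and iterating shrinks the elements. Finally (1) is the classical Moore-type theorem, since the earlier result that cell-like subsets of a $2$-manifold are cellular reduces it to shrinking a cellular surface decomposition.

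The hard part will be the geometric inputs rather than the bookkeeping. The Baire-category limit in (8) must be controlled so that the limit map is simultaneously injective, surjective, and shrinking on all countably many elements; balancing ``shrink $D_k$'' against ``do not undo the shrink of $D_1,\dots,D_{k-1}$'' is where the $G_{\delta}$ and completeness hypotheses earn their place. Among the individual cases I expect Bing's thin-decomposition shrink in dimension $3$ (case~(6)), the starlike-equivalent case in $\R^3$ (case~(4)), and the codimension-three push-off argument (case~(10)) to be the real obstacles; each imports a substantial, separately difficult technique — Bing shrinking, $3$-dimensional ball re-embedding, and general-position disjointness respectively — so a complete treatment would essentially reproduce those arguments rather than derive them from the material already developed.
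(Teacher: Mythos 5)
Your overall architecture has a genuine gap at its foundation: you propose to prove item (8) first and then derive (2), (3), (4) and (7) from it by verifying that flat arcs, starlike sets, starlike-equivalent sets and thin elements are locally shrinkable. But item (8) carries the explicit hypothesis that $\cup \mathcal H_{\mathcal D}$ is $G_{\delta}$, and this is \emph{not} implied by countability. The paper's own example already defeats the reduction: the countable usc decomposition of $\R$ (or of $\R^2$, taking horizontal flat arcs) whose non-degenerate elements are the middle-third intervals of the Cantor construction has $\cup \mathcal H_{\mathcal D}$ equal to the union of the open middle thirds together with the countable set $E$ of their endpoints. If this union were $G_{\delta}$, then intersecting with the Cantor set $C$ would exhibit $E$ as a $G_{\delta}$ subset of $C$; but $E$ is a countable dense subset of the perfect compact set $C$, and no such subset is $G_{\delta}$ in $C$ by the Baire category theorem. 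So there are instances of item (2) (and likewise (3) and (7)) to which item (8) simply does not apply, and your backbone collapses. The $G_{\delta}$ hypothesis cannot be silently dropped either: it is precisely what makes the convergence/Baire-type argument for (8) work, which is why it appears as a separate hypothesis rather than being absorbed into countability.

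The paper's route avoids this by using a different master tool, Proposition~\ref{shrinking_lemma}, whose hypothesis is strictly stronger than local shrinkability: for every element $D$, every $\ep>0$ and every homeomorphism $f$ one must produce $h$ agreeing with $f$ off the $\ep$-nbhd of $D$, shrinking $D$ below $\ep$, \emph{and expanding every other element by less than $\ep$} (condition (3) there). Items (2)--(5) are then proved exactly by verifying this non-expansion control for flat arcs, starlike and starlike-equivalent sets, and (6)--(7) by combining it with the thinness ball and the radial $\de_n$-level trick; this control is the genuine geometric content, and your sketches (``pull the arc to a point inside a flat tube'', ``contract radially'') do not address it, since the tube homeomorphism acts on all the other decomposition elements threading through the tube. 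Your handling of the remaining cases is broadly consistent with the paper's: (1) via Moore's theorem after cell-like implies thin/cellular in dimension $2$, (6) via Bing-style ball re-embedding, (9) via spanning of bicollared spheres (the paper phrases it as making $\pi(\del B_{\al})$ and $\pi(\del B_{\al}')$ disjoint for two ball covers), and (10) where the paper goes through the cellularity criterion and starlike-equivalence rather than invoking disjoint disks; note also that the paper treats (5) by the same Proposition~\ref{shrinking_lemma} rather than by a separate finitely-many-large-elements argument. But to repair the proposal, the countable-family backbone must be replaced by Proposition~\ref{shrinking_lemma} (or an equivalent non-expansion lemma); as written, the derivation of (2), (3), (4), (7) from (8) fails on explicit examples.
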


Before proving Theorem~\ref{shrinking_thm} let us make some observations and preparations.
At first,
 note that there are usc decompositions of $\R^3$
into straight line segments  which are not shrinkable: 
in the proof of Proposition~\ref{line_decomp} for any given compact metric space $Y$
we constructed a decomposition of $\R^3$ into straight line segments and singletons
such that $Y$ is a subspace of
the decomposition space.
 Since $\R^3$ is a complete metric space and the decomposition 
 is usc it is also shrinkable if and only if $\pi$ is approximable by homeomorphisms. 
 This means that if $Y$ cannot be embedded into $\R^3$, then 
 the decomposition space cannot be homeomorphic to $\R^3$
 and  then this decomposition is not shrinkable.

If the decomposition is countable, then 
we can shrink successively the decomposition elements 
if there is a guaranty of not expanding an already shrunken element 
while shrinking another one.
The next proposition is a technical tool for this process.

\begin{prop}\label{shrinking_lemma}
Let $\mathcal D$ be a countable usc decomposition 
of a locally compact metric space $X$.
Suppose for every $D \in \mathcal D$, for every $\ep > 0$ 
and for every homeomorphism $f \co X \to X$
there exists
a homeomorphism $h \co X \to X$ such that
\begin{enumerate}[\rm (1)]
\item
outside of the $\ep$-nbhd of $D$ the homeomorphism 
$h$ is the same  as $f$,
\item
$\mathrm {diam} \thinspace h(D) < \ep$ and
\item
for every $D' \in \mathcal D$
we have $\mathrm {diam} \thinspace h(D') < \ep + \mathrm {diam} \thinspace  f ( D')$. 
\end{enumerate}
Then $\mathcal D$ is strongly shrinkable.
\end{prop}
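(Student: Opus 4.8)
The plan is to verify the definition of strong shrinkability directly, by the successive‑shrinking scheme that the surrounding text advertises. Fix once and for all an open cover $\mathcal V$, a $\mathcal D$-saturated open cover $\mathcal U$, and an open set $W$ containing $\cup\mathcal H_{\mathcal D}$; I must produce a self-homeomorphism $h$ of $X$ with support in $W$ that carries every $D\in\mathcal D$ into some member of $\mathcal V$ and is $\mathcal U$-close to the identity. Two preliminary reductions are available. First, a locally compact metrizable space is completely metrizable, so I may fix a complete metric $d$ on $X$; this is what allows an infinite sequence of homeomorphisms to have a limit. Second, since $X_{\mathcal D}$ is metrizable (as proved earlier) it is paracompact, so $\pi(\mathcal U)$ has a star-refinement whose $\pi$-preimage is a $\mathcal D$-saturated open cover $\mathcal U'$ that star-refines $\mathcal U$; I keep $\mathcal U'$ for the closeness estimate. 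Finally I enumerate $\mathcal H_{\mathcal D}=\{D_1,D_2,\dots\}$ (the finite case only simplifies matters).

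The core is an inductive construction of homeomorphisms $h_0=\mathrm{id},h_1,h_2,\dots$, where $h_i$ is produced by applying the hypothesis to the element $D_i$, the previous homeomorphism $f=h_{i-1}$, and a radius $\ep_i>0$ chosen small enough to satisfy several demands simultaneously. Writing $N_i$ for the $\ep_i$-neighborhood of $D_i$, I will require: $N_i\subset W$ and $N_i$ lies inside a single member of $\mathcal U'$ containing $D_i$ (possible because $D_i$ is compact and these are open saturated neighborhoods) — this keeps the support in $W$ and feeds the closeness bookkeeping; and $\ep_i$ is below a Lebesgue number of $\mathcal V$ over the compact set $h_{i-1}(\cl K_i)$, where $K_i$ is a fixed compact neighborhood of $D_i$ with $N_i\subset K_i$, so that the shrunk image, which by condition (1) satisfies $h_i(D_i)\subset h_{i-1}(N_i)$ and by condition (2) has diameter $<\ep_i$, falls inside a single $V_i\in\mathcal V$. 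The role of condition (3) is exactly the narrated ``do not re-expand what was already shrunk'': it gives $\mathrm{diam}\,h_i(D')<\mathrm{diam}\,h_{i-1}(D')+\ep_i$ for every element $D'$, so telescoping yields $\mathrm{diam}\,h(D_j)\le\sum_{k\ge j}\ep_k$ for each $j$. Taking $\ep_i\downarrow 0$ summably therefore keeps every element's image small in the limit, while at step $i$ I also shrink $\ep_i$ enough not to spoil the support, $\mathcal V$-fit, or $\mathcal U'$-localization already arranged for the finitely many earlier $D_j$.

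Passing to the limit, condition (1) localizes each step to $N_i\cup h_{i-1}(N_i)$, so $h=\lim h_i$ is defined, equals the identity off $W$, sends each $D_j$ into its $V_j\in\mathcal V$, and the telescoped estimates plus the $\mathcal U'$-star-refinement give $\mathcal U$-closeness by the standard telescoping of stars. The genuine obstacle — where essentially all the work lies — is proving that this limit $h$ is an honest homeomorphism. When infinitely many large elements accumulate onto another non-degenerate element, the incremental motions do not shrink in diameter and the convergence is only pointwise, so the naive ``uniformly Cauchy homeomorphisms have a homeomorphic limit'' lemma does not apply; only where the $D_i$ accumulate onto a singleton does upper semi-continuity force $\mathrm{diam}\,D_i\to0$ and hence summable motion. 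I expect to push the homeomorphism property through by using completeness of $d$ together with upper semi-continuity (via the $\liminf$/$\limsup$ theorem and Lemma~\ref{saturated}) to show that distinct decomposition elements receive disjoint images and that both $h$ and $h^{-1}$ are continuous, checking bijectivity and the inverse's continuity by hand rather than from a uniform estimate. This verification is the crux of the proposition.
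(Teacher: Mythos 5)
Your proposal follows the same successive-shrinking outline as the paper (enumerate the non-degenerate elements, apply the hypothesis with $f$ equal to the previously constructed homeomorphism, use condition (3) to control re-expansion), but it leaves unproved exactly the step on which the whole proposition turns, and the route you sketch for it cannot be completed as stated. You propose to get the limit homeomorphism from completeness of the metric plus summable $\ep_i$ and a ``by hand'' check of bijectivity and continuity of the inverse. Summability of the increments is not enough: a sequence of self-homeomorphisms whose successive uniform distances are summable can perfectly well converge to a non-injective map (on $\R$, let $h_k$ squeeze $[0,1]$ onto $[0,2^{-k}]$; then $\sup_x d(h_k(x),h_{k-1}(x))\leq 2^{-k+1}$ is summable, yet the limit collapses $[0,1]$ to a point). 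So even in the favorable case where your telescoped estimates give locally uniform convergence, nothing you have said prevents the limit from collapsing, and completeness of $X$ is irrelevant to that danger. You correctly identify the problematic configuration --- infinitely many elements of diameter bounded below accumulating onto another non-degenerate element, which genuinely occurs (e.g.\ the segments $\{1/n\}\x[0,1]$ accumulating onto $\{0\}\x[0,1]$ in $\R^2$) --- but you then only ``expect to push the homeomorphism property through,'' which is a hope, not an argument. That verification is the entire content of the proposition, so the proposal has a genuine gap.

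The paper closes this gap with a device that is absent from your proposal: \emph{protective buffers} that make the sequence locally stationary, so that no limit argument is needed at all. After $h_n$ shrinks $D_n$ to diameter $<\ep/2^{n+2}$, continuity of $h_n$ plus upper semi-continuity (Lemma~\ref{saturated}(2)) produce a \emph{saturated closed} neighborhood $C_n$ of $D_n$ inside the $h_n$-preimage of a small ball around $h_n(D_n)$; consequently every decomposition element contained in $C_n$ already has $h_n$-image of diameter $<\ep$. The construction then imposes two rules: all later homeomorphisms agree with $h_n$ on $C_1\cup\cdots\cup C_n$ (possible because any later element not contained in these buffers is, by saturation, disjoint from them and hence at positive distance, so its shrinking support can avoid them), and an element whose current image is already of diameter $<\ep$ is simply skipped. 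In your accumulation scenario the elements crowding onto $E$ are swallowed by $E$'s buffer at the step where $E$ is shrunk, are small from that moment on, and are never moved again; every point of $X$ then has a neighborhood on which the sequence $h_1,h_2,\ldots$ is eventually constant, and a locally stationary sequence of homeomorphisms trivially has a homeomorphism as its limit. Note also that the paper only enumerates the elements of diameter $\geq\ep/2$ and keeps the total expansion budget below $\ep/2$ via the bound $(1-1/2^n)\ep/2$ in its condition (c), so all other elements stay below $\ep$ throughout; that is a convenience, but the buffers together with the freeze and skip rules are essential, and without them (or an equivalent mechanism forcing local stationarity) your argument cannot be completed.
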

\begin{proof}[Sketch of the proof]
Let $\ep > 0$ and let $\mathcal U$ be a $\mathcal D$-saturated open cover of $X$. 
We enumerate the non-degenerate elements of $\mathcal D$ which have diameter at least $\ep/2$ as $D_1, D_2, \ldots$.
We can find $\mathcal D$-saturated open sets $U_1, U_2, \ldots$ such that 
for all $n$ we have $D_n \subset U_n$ and all sets $U_n$ are pairwise disjoint or coincide. These $U_n$ are subsets 
of sets in $\mathcal U$ and they will ensure $\mathcal U$-closeness.
We produce a sequence ${\mathrm {id}}=h_0, h_1, \ldots$ of self-homeomorphisms of $X$
and a sequence $C_1, C_2, \ldots$ of $\mathcal D$-saturated closed  nbhds of $D_1, D_2, \ldots$, respectively, 
such that a couple of conditions are satisfied for every $n \geq 1$:
\begin{enumerate}[(a)]
\item
$h_n |_{X - U_n} = h_{n-1}|_{X - U_n}$,
\item
${\mathrm {diam}}\thinspace h_n ( D_n) < \ep$,
\item
for every $D \in \mathcal D$ we have ${\mathrm {diam}}\thinspace h_n ( D) < (1 - \frac{1}{2^n})\frac{\ep}{2} 
 + {\mathrm {diam}} \thinspace D$,
\item
$h_{n+1}|_{C_1 \cup \cdots \cup C_n} = h_{n}|_{C_1 \cup \cdots \cup C_n}$,
\item
if some $D \in \mathcal D$  is in  $C_n$, then  ${\mathrm {diam}}\thinspace h_n ( D) < \ep$ and
\item
$h_n = h_{n-1}$ if ${\mathrm {diam}}\thinspace h_{n-1} ( D_n) < \ep$.
\end{enumerate}
The sets $C_n$ serve as protective buffers in which no further motion will occur. 
For $n=1$ by the conditions (1), (2) and (3) in the statement of Proposition~\ref{shrinking_lemma} 
with the choice $f={\mathrm {id}}$ we can find a homeomorphism $h_1 \co X \to X$ 
satisfying (a), (b) and (c) and also an appropriate $C_1$ such that (d) and (e) are satisfied as well.
If $h_k$ and $C_k$ are defined already for $1 \leq k \leq n$, then 
we find $h_{n+1}$ and $C_{n+1}$ as follows.
If ${\mathrm {diam}}\thinspace h_{n} ( D_{n+1}) < \ep$,
then let $h_{n+1} = h_n$. If the diameter of $h_{n} ( D_{n+1} )$
is at least $\ep$, then 
by the conditions (1), (2) and (3) with the choice $f=h_n$
we can find a homeomorphism $h_{n+1} \co X \to X$ satisfying
\begin{enumerate}[(i)]
\item
$h_{n+1}|_{X - U_{n+1}} = h_{n}|_{X - U_{n+1}}$
\item
${\mathrm {diam}}\thinspace h_{n+1} ( D_{n+1}) < \ep/ 2^{n+2}$,
\item
for every $D \in \mathcal D$ we have ${\mathrm {diam}}\thinspace h_{n+1} ( D) < \ep/{2^{n+2}}
 + {\mathrm {diam}} \thinspace h_n(D)$
\end{enumerate}
furthermore (iii) and (c) imply that 
for every $D \in \mathcal D$ we have
$$
{\mathrm {diam}}\thinspace h_{n+1} ( D) <\ep/{2^{n+2}} + 
\left(1 - \frac{1}{2^n}\right)\frac{\ep}{2} 
 + {\mathrm {diam}} \thinspace D = 
 \left(1 - \frac{1}{2^{n+1}}\right)\frac{\ep}{2} 
 + {\mathrm {diam}} \thinspace D 
$$
so (a), (b) and (c) are satisfied. It is not too difficult to get (d) and (e) with some $C_{n+1}$ as well.
 After having all $h_1, h_2, \ldots$ and $C_1, C_2, \ldots$ with properties (a)-(f)
 it is easy to see by (d), (e) and (f) that every $D \in \mathcal D$ which is in $C_1 \cup \cdots \cup C_n$ is shrunk
by $h_n$ to size smaller than $\ep$ and other $h_{n+i}$ does not modify this.
If some $D \in \mathcal D$ 
had  diameter smaller than $\ep/2$ originally, then  (c) implies that its diameter is smaller than  $\ep$ during all the process.
These imply that the sequence $h_1, h_2, \ldots$ is locally stationary and it converges to a shrinking homeomorphism $h$.
\end{proof}

We are going to give a sketch of the proof of Theorem~\ref{shrinking_thm}. 
For the detailed proof of (1) see \cite{Mo25}, for the proofs of
(2) and (3) see \cite{Bi57}, for the proof of (4) see \cite{DS83}, for (5) see \cite{Be67}, for (6) see \cite{Wo77} and 
for (7), (8), (9) and (10) see \cite{Pr66}, \cite{Bi57}, \cite{Ca78} and \cite{Ca79, Ed16}, respectively.
\begin{proof}[Sketch of the proof of Theorem~\ref{shrinking_thm}]
(1) follows from the fact that in a $2$-dimensional manifold $X$
a cell-like decomposition is thin. The reason of this is that an arbitrarily  small $2$-dimensional disk nbhd $B$  with 
the property 
$\del B \cap (\cup \mathcal H_{\mathcal D}) = \emptyset$
can be obtained by finding the circle $\del B$  in $X$ as a limit of a sequence of maps $f_n \co S^1 \to X$
avoiding smaller and smaller decomposition elements.
A thin usc decomposition of a $2$-dimensional manifold is shrinkable if
the points of $\pi(\bigcup \mathcal H_{\mathcal D})$ do not converge to each other in a too
complicated way.
Since the quotient space $X_{\mathcal D}$
can be filtered in  a way which implies this, 
the decomposition map $\pi$ can be successively approximated by maps which are homeomorphisms on the induced 
filtration in $X$.

 (2)-(5) follows from Proposition~\ref{shrinking_lemma}: the flat arcs, starlike sets and starlike-equivalent sets
can be shrunk successively because of geometric reasons.

 To prove (6) and (7) we also use Proposition~\ref{shrinking_lemma}.
 Let $D \in \mathcal D$ be a non-degenerate decomposition element, $U$ a nbhd of $D$
 and let $B$ be a ball such that
 $D \subset B \subset U$ and $\del B$ is disjoint from the non-degenerate elements of $\mathcal D$.
 After applying a self-homeomorphism of $X$, we can suppose that 
 $B$ is the unit ball. 
 Let $k$ be some large enough integer and let $1 > \de_0 > \de_1 > \cdots > \de_{k-1} > 0$ be such that 
 if  $D' \in \mathcal D$ intersects 
 the $\de_{n+1}$-nbhd of $\del B$, then $D'$ is inside the $\de_{n}$-nbhd of $\del B$.
 Define a homeomorphism $f \co B \to B$
 which is the identity on $\del B$, keeps the center of $B$ fixed and
 on each radius $R$ the point at distance $\de_n$ from $\del B$, where $1 \leq n \leq k-1$, is mapped to 
 the point at distance $n/k$ from the center. We require that the homeomorphism $f$ is linear between these points. 
  After applying this homeomorphism, 
 every $D' \in \mathcal D$ in $B$ is shrunk to size small enough.
  
 In the proof of (8) 
we enumerate the non-degenerate decomposition 
elements
and we construct a sequence of homeomorphisms of the ambient space
which shrink the decomposition elements 
successively using the locally shrinkable property.

 To prove (9) for a given $\ep > 0$
we cover the manifold by two collections $\{ B_{\alpha} \}_{\alpha \in A}$ and $\{ B_{\alpha}' \}_{\alpha \in A}$ of 
$n$-dimensional balls such that $B_{\alpha} \subset \mathrm {int} \thinspace B_{\alpha}'$ and $\mathrm {diam} \thinspace B_{\alpha}' < \ep$.
 Then the closed sets $\pi (\del B_{\alpha})$ and $\pi ( \del B_{\alpha}')$
are made disjoint by applying homeomorphisms $h_{\alpha}$ successively.
This implies that the homeomorphism $h$ obtained by composing all  the homeomorphisms $h_{\alpha}$
is such that for every $D \in \mathcal D$ the set $h(D)$ is fully contained in some 
ball $B_{\alpha}'$ so its diameter is smaller than $\ep$.

 In the proof of (10) at first we obtain that every decomposition element $D$ is cellular because of the following.
 By assumption $D$ is cell-like and 
it behaves like an at most $(n-3)$-dimensional submanifold so 
the $2$-skeleton of the ambient manifold is disjoint from $D$.
This means that $D$ satisfies the cellularity criterion since 
the $2$-skeleton carries the fundamental group.
Hence $D$ is cellular, which implies that it is contained in an $n$-dimesional ball and also in a starlike-equivalent
set $C$ of embedding dimension $\leq n-2$.
Now it is possible to use an argument similar to the proof of Proposition~\ref{shrinking_lemma}: 
we can shrink $C$ to become smaller than an $\ep > 0$
by successively compressing $C$  and in each iteration carefully controlling and avoiding other decomposition elements close to $C$ 
which would become too large during the compression procedure.
\end{proof}

%Another result giving
%a condition for being shrinkable is the following.
%
% 
%\begin{thm}
%A cell-like decomposition of an at least $5$-dimensional manifold is shrinkable exactly if  the decomposition space is ANR
%and has the disjoint disks property. 
%\end{thm}

%\input{decomp.ind}

\end{document}